\newtheorem{Thm}{Theorem}[section]
\theoremstyle{definition}
\newtheorem{Theorem}[Thm]{Theorem}
\newtheorem{Lemma}[Thm]{Lemma}
\newtheorem{Corollary}[Thm]{Corollary}
\newtheorem{Proposition}[Thm]{Proposition}
\newtheorem{Definition}[Thm]{Definition}
\newtheorem{Example}[Thm]{Example}
\theoremstyle{remark}
\newtheorem{Remark}{Remark}
\font\sy=cmsy10
\font\ym=msbm10
\newcommand{\N}{{\text{\ym N}}}
\newcommand{\Z}{\text{\ym Z}}
\newcommand{\R}{\text{\ym R}}
\newcommand{\C}{\text{\ym C}}
\newcommand{\cL}{{\hbox{\sy L}}}
\newcommand{\sA}{\mathscr A}
\newcommand{\sB}{\mathscr B}
\newcommand{\sC}{\mathscr C}
\newcommand{\sH}{\mathscr H}
\newcommand{\sK}{\mathscr K}
\newcommand{\sL}{\mathscr L}
\begin{document}
\title
{Notes on Tensor Product Measures}
\author{YAMAGAMI Shigeru\\ Nagoya University\\ Graduate School of Mathematics}
\maketitle   
% \begin{center}
% YAMAGAMI Shigeru\\ %\footnote{Partially supported by KAKENHI(22540217)}
% Nagoya University\\ 
% Graduate School of Mathematics
% \end{center}
% \begin{center} 
% Nagoya, 464-8602, JAPAN 
% \end{center}    
% \begin{center}
%\email{yamagami@math.nagoya-u.ac.jp}
% \end{center}
% \subjclass[2000]{46L60, 46L51}
% \begin{abstract}
% \end{abstract}

% \section*{Acknowledgements}

%\tableofcontents 

% \begin{abstract}
% In this notes, some of basic facts are reviewed on tensor products of vector-valued measures. 
% \end{abstract}

% \section*{Synopsis} 

%This is a reading document for a short course on the entitled subject
% Simple linear algebraic arguments produce remarkable results. %lead us to extraordinary conclusions. 

% Regain Basic skills in linear algebras and real numbers 
% together with some geometric intuitions related to linear algebras
\section*{Introduction}
Consider a spectral decomposition $U(t) = \int_\R e^{it\tau} E(d\tau)$ of 
a one-parameter unitary group $U(t)$ on a Hilbert space $\sH$,  
where $E(\cdot)$ is a projection-valued measure on $\R$. 
In quantum mechanics, the dynamical behavior of a physical system is described by the associated 
automorphic action of $\R$ on the algebra $\sL(\sH)$ of bounded linear operators. 
The related transition probabilities are associated to 
$(\xi|U(t)TU(t)^*\eta) = (U(t)^*\xi|T U(t)^*\eta)$ 
($\xi, \eta \in \sH$, $T \in \sL(\sH)$), which takes the form 
\[ 
(\int_\R e^{-it\tau} E(d\tau) \xi|T \int_\R e^{-it\tau'} E(d\tau')\eta) 
\]
in terms of the spectral measure. At first glance, it seems quite natural  
to rewrite this to the product measure form like 
\[ 
\int_{\R\times \R} e^{it(\tau - \tau')} (E(d\tau)\xi|T E(d\tau')\eta),  
\] 
which means that we expect a complex-valued measure $(E(d\tau)\xi|T E(d\tau')\eta)$ to be  
well-defined on $\R^2$. This anticipation is reasonably generalized to the following question: 
Let $T \in \sL(\sH)$ and $\xi(\cdot), \eta(\cdot)$ be $\sH$-valued measures 
on a $\sigma$-algebra $\sB$ in a set $S$. % $\sB$ of Borel sets in $\R$. 
It is immediate to check that the map $\sB\times \sB \ni A\times B \mapsto (\xi(A)|T\eta(B))$ 
is extended to a finitely additive function $\mu$ on 
the Boolean algebra $\sB\otimes \sB$ generated by $\sB\times\sB$. Is it then possible to 
extend $\mu$ to a complex measure on the $\sigma$-algebra generated by $\sB\times \sB$? 
When $T$ is a finite rank operator, $\mu$ certainly admits such an extension as a linear combination 
of product measures and with a litte more effort we can show that the question is answered affirmatively  
for a trace class operator. 
The general answer, however, turns out to be negative: A bounded linear operator $T$ has 
the measure extension property if and only if 
$T$ is in the Hilbert-Schmidt class ([Swartz1976, Theorem~8]\footnote{We would like to point out, 
however, that the proof there is based on a theorem in another paper, 
which seems difficult to be identified.}
). 

Our main purpose here is to collect relevant results together and combine them to give a self-contained 
proof of it. 

\bigskip
\noindent
Notation: For a Banach space $V$, its unit ball is denoted by $V_1$ and its dual space by $V^*$. 
Given a set $T$, $\ell^\infty(T)$ denotes the Banach space of bounded complex-valued functions on $T$ 
with the sup-norm, which is the dual Banach space of $\ell^1(T)$ of summable functions. 
We then have a canonical isometric embedding 
$V \to \ell^\infty(V_1^*)$ for a Banach space $V$ 
as a restriction of the canonical pairing $V\times V^* \to \C$: 
For $v \in V$, $\widehat{v} \in \ell^\infty(V_1^*)$ is defined by 
$\widehat{v}(v^*) = \langle v, v^*\rangle$ ($v^* \in V_1^*$). 

More generally, if $T \subset V_1^*$ satisfies $\| v\| = \sup\{ |\langle v,v^*\rangle|; v^* \in T\}$, 
then the restriction map $\ell^\infty(V_1^*) \to \ell^\infty(T)$ is isometric on 
$\widehat V = \{ \widehat v; v \in V\}$ and we get an embedding $V \to \ell^\infty(T)$. 

The semi-variation of a finitely additive measure $\phi$ is denoted by $|\phi|$, while 
$\| \phi\|$ is set aside to designate the total semi-variation of $\phi$.

\section{Vector Valued Measures}
We shall mainly deal with Banach spaces as vector spaces and nominate [Diestel-Uhl] as a basic reference. 
See also [Ricker, Chap.1] for a friendly survey on the subject. 

In a (Hausdorff) topological vector space $V$, a family of vectors $\{ v_i\}_{i \in I}$ is said to be 
\textbf{summable} if we can find a vector $v \in V$ fulfilling the following condition: 
Given any neighbourhood $N$ of $v$, we can find a finite subset $F \subset I$ so that 
$\sum_{j \in F \cup F'} v_j \in N$ for any finite subset $F' \subset I \setminus F$. 
The vector $v$ is unique if it exists and denoted as $v = \sum_{i \in I} v_i$. 

When $V$ is a Fr\'echet space, the condition is equivalent to the following: 
Given any neighborhood $N$ of $0$, we can find a finite subset $F$ of $I$ so that 
$\sum_{j \in F'} v_j \in N$ for any finite subset $F' \subset I \setminus F$. 

When $I$ is countable, any counting labeling $\{ i_n; n\geq 1\}$ gives 
\[ 
v = \lim_{n \to \infty} \sum_{k=1}^n v_{i_k}. 
\] 
Conversely, in a Banach space $V$, 
if any counting labeling satisfies the above convergence relation, then $\{ v_i\}$ is summable and 
$v = \sum_{i \in I} v_i$. 
In fact, if not, 
\[ 
\exists \epsilon>0, \forall F \Subset I, \exists F' \Subset I \setminus F, 
\Bigl\| \sum_{j \in F'} v_j \Bigr\| \geq \epsilon 
\]
and we can find a partition $\bigsqcup F_n$ of $I$ by finite subsets satisfying 
$\| \sum_{j \in F_n} v_j \| \geq \epsilon$. Let $\{ i_k\}$ be a counting labeling adapted to 
the increasing sequence $F_1 \subset F_1 \cup F_2 \subset \cdots$. Then $\{ v_{i_k} \}_{k \geq 1}$ 
cannot be a Cauchy sequence by looking at $k = |F_1| + \cdots + |F_n|$ ($n=1,2,\cdots$). 

When $V$ is finite-dimensional with $\| \cdot\|$ any compatible norm, 
the summability of $\{ v_i\}_{i \in I}$ is equivalent to $\sum_{i \in I} \| v_i\| < \infty$, 
the so-called absolute convergence. In fact, for a basis $\{ v_1^*, \cdots, v_n^*\}$, the summability 
implies absolute convergence of $\sum_{i \in I} v_j^*(v_i)$ for $1 \leq j \leq n$, which is equivalent 
to $\sum_{j=1}^n \sum_{i \in I} |v_j^*(v_i)| < \infty$. 
Note that $\sum_{j=1}^n |v_j^*(v)|$ ($v \in V$) defines a norm on $V$. 

\begin{Example}
Let $\{ \delta_n\}_{n \geq 1}$ be an ONB in a separable Hilbert space $\sH$. Then, for $\xi \in \sH$, 
$\{ (\delta_n|\xi) \delta_n\}_{n \geq 1}$ is summable and 
$\xi = \sum_{n \geq 1} (\delta_n|\xi) \delta_n$, 
whereas its absolute convergence is equivalent to the stronger condition 
$\sum_{n \geq 1} |(\delta_n|\xi)| < \infty$. 
\end{Example} 

Let $V$ be a Banach space and $\sB$ be a Boolean algebra in a set $S$. 
A $V$-valued \textbf{semi-measure} is an additive map $\phi: \sB \to V$. 
We say that $\phi$ is \textbf{countably additive} if $\displaystyle A = \bigsqcup_{n=1}^\infty A_n$ is 
a countable partition in $\sB$, then $\displaystyle \phi(A) = \sum_{n \geq 1} \phi(A_n)$. 
By the correspondance between $B_m = \bigsqcup_{n \geq m} A_n = A \setminus (\bigcup_{1 \leq n <m}A_n)$ and 
$A_n = B_n \setminus B_{n+1}$, countable additivity is equivalent to the condition: 
If $B_n \downarrow \emptyset$ in $\sB$, then $\lim_{n \to \infty} \phi(B_n) = 0$. 

A semi-measure $\phi: \sB \to V$ is called a \textbf{measure} if 
$\sB$ is a $\sigma$-algebra and $\phi$ is countably additive. 

Clearly countable additivity implies $\lim_{n \to \infty} \phi(A_n) = 0$. 
We say that a semi-measure $\phi$ is \textbf{squeezing}\footnote{A common terminology for this 
is strong additivity, which is, however, about summability rather than additivity.} if 
$\lim_{n \to \infty} \|\phi(A_n)\| = 0$ for any disjoint sequence $\{ A_n\}_{n \geq 1}$ in $\sB$ 
($\cup_n A_n \in \sB$ being not assumed). 
Remark that a squeezing semi-measure $\phi$ is continuous, i.e.,
$A_n \downarrow \emptyset$ in $\sB$ implies $\lim_n \phi(A_n) = 0$, and, if $\sB$ is a $\sigma$-algebra, 
a continuous semi-measure is squeezing. 
This squeezing property together with finite additivity of $\phi$ 
in turn assures the summability of $\{ \phi(A_n)\}$. 
In fact, non-summability 
\[ 
\exists \epsilon > 0, \forall N, \exists n \geq m \geq N,\  
\Bigl\| \sum_{k=m}^n \phi(A_k) \Bigr\| \geq \epsilon  
\] 
allows us to find a subsequence $1= l_1 < l_2 < \cdots$ satisfying  
$\| \sum_{l_j \leq k < l_{j+1}} \phi(A_k) \| \geq \epsilon$ and we get 
a non-squeezing series $\sum_{j=1}^\infty \phi(B_j)$, where 
$B_j = \cup_{l_j \leq k < l_{j+1}} A_k$ gives a disjoint sequence in $\sB$. 
Notice that $\phi(B_j) = \sum_{l_j \leq k < l_{j+1}} \phi(A_k)$ by finite additivity of $\phi$. 

\begin{Example} 
Let $\sB$ be the power set of $\N$. Then an additive map $\phi: \sB \to V$ gives rise to a sequence 
$\{ v_n = \phi(\{ n\}) \}$ and the squeezing property of $\phi$ implies the Cauchy condition that, 
given $\epsilon > 0$, there exists 
an $N \geq 1$ satisfying $\| \sum_{j \in F} v_j\| \leq \epsilon$ for any 
finite subset $F$ of $\{N+1,N+2, \cdots \}$. 
Conversely given a sequence $\{ v_n\}$ satisfying the Cauchy condition, 
$\{ v_n \}_{n \in A}$ is summable for any subset $A \subset \N$ and 
a countably additive map $\phi: \sB \to V$ is defined by 
\[ 
\phi(A) = \sum_{n \in A} v_n. 
\] 
\end{Example}

\begin{Example}
Let $\sB$ be the Boolean algebra generated by finite subsets of $\N$: 
$A \in \sB$ if and only if either $A$ or $\N \setminus A$ is finite. 
A semi-measure  $\phi: \sB \to \Z$ is then defined by 
\[ 
\phi(A) = 
\begin{cases}
|A| &\text{if $|A| < \infty$,}\\
-|\N \setminus A| &\text{otherwise.}
\end{cases}
\]
\end{Example} 

\begin{Example}
Let $(S,\sB,\mu)$ be a probability space. 
Then $\sB \ni A \mapsto 1_A \in L^p(S,\mu)$ defines a measure for $1 \leq p < \infty$ and 
a semi-measure for $p = \infty$. 
\end{Example}

Let $T: V \to W$ be a bounded linear operator between Banach spaces. Given a semi-measure (resp.~measure) 
$\phi:\sB \to V$, the composite map $T\phi:\sB \to W$ is a semi-measure (resp.~measure). 
As a special case of this, 
we have a semi-measure (resp.~measure) $\widehat\phi: \sB \to \ell^\infty(V_1^*)$ 
as a composition of $\phi$ with the canonical embedding $V \to \ell^\infty(V_1^*)$. 

\begin{Definition}
Given a semi-measure $\lambda: \sB \to \C$, 
the \textbf{variation} of $\lambda$ is a function $|\lambda|: \sB \to [0,\infty]$ defined by 
\[ 
|\lambda|(A) = \sup\{ \sum_{j=1}^n |\lambda(A_j)|; 
\text{$\{ A_j\}$ is a finite partition of $A$ in $\sB$} \}. 
\]
The value $|\lambda|(S)$ is called the total variation of $\lambda$ and denoted by 
$\| \lambda \|$. A semi-measure $\phi$ is said to be of \textbf{bounded variation} 
when $\| \lambda \| < \infty$. 
\end{Definition} 

The following are standard facts on complex (semi-)measures. 

\begin{Proposition}~\label{complex}
\begin{enumerate}
\item
The variation $|\lambda|$ of a complex semi-measure $\lambda$ is additive and satisfies the inequality
\[ 
\sup\{ |\lambda(A)|; A \subset B, A \in \sB\} \leq |\lambda|(B) 
\leq \pi \sup\{ |\lambda(A)|; A \subset B, A \in \sB\}  
\quad 
\text{for $B \in \sB$.} 
\]
\item
The variation of a complex measure $\lambda$ is countably additive and satisfies 
\[ 
|\lambda|(A) = \sup\{ \sum_{j=1}^\infty |\lambda(A_j)|; 
\text{$\{ A_j\}$ is a countable partition of $A$ in $\sB$} \}. 
\]
\item 
Any complex measure defined on a $\sigma$-algebra $\sB$ has a finite total variation and 
the vector space $L^1(\sB)$ 
of all complex measures on $\sB$ is a Banach space with the norm of total variation. 
\end{enumerate}
\end{Proposition}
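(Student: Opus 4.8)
\noindent
The plan is to handle the three items in order, with (i) supplying a combinatorial tool — the ``$\pi$-trick'' on finite sums of complex numbers — that gets reused in (iii).

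For (i), additivity of $|\lambda|$ on $\sB$ is routine: if $B = B_1 \sqcup B_2$, concatenating partitions of $B_1$ and $B_2$ gives $|\lambda|(B) \geq |\lambda|(B_1) + |\lambda|(B_2)$, while splitting each piece $A_j$ of a partition of $B$ as $(A_j\cap B_1)\sqcup(A_j\cap B_2)$ and applying the triangle inequality gives the reverse. The left half of the displayed inequality is immediate from the two-set partition $\{A, B\setminus A\}$. The right half rests on the elementary fact that for any finite family $z_1,\dots,z_n \in \C$ there is a subset $J$ with $|\sum_{j\in J} z_j| \geq \frac1\pi \sum_j |z_j|$: taking $J(\theta) = \{ j : \mathrm{Re}(e^{-i\theta}z_j) > 0\}$, one has $|\sum_{j\in J(\theta)} z_j| \geq \sum_j (\mathrm{Re}(e^{-i\theta}z_j))^+$, and averaging the right-hand side over $\theta \in [-\pi,\pi)$ produces exactly $\frac1\pi\sum_j|z_j|$ (since $\frac{1}{2\pi}\int_{-\pi}^{\pi}(\cos u)^+\,du = \frac1\pi$), so some $\theta$ works. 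Applying this to $z_j = \lambda(A_j)$ for a finite partition $\{A_j\}$ of $B$, the set $A = \bigsqcup_{j\in J}A_j \subset B$ satisfies $\sum_j|\lambda(A_j)| \leq \pi|\lambda(A)|$, and taking the supremum over partitions closes (i).

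For (ii), given a countable partition $A = \bigsqcup_n A_n$, finite additivity already yields $\sum_{n\leq N} |\lambda|(A_n) \leq |\lambda|(A)$, hence $\sum_n |\lambda|(A_n) \leq |\lambda|(A)$; for the reverse, take a finite partition $\{B_k\}$ of $A$, expand $\lambda(B_k) = \sum_n \lambda(B_k\cap A_n)$ by countable additivity of $\lambda$, apply the triangle inequality, interchange the nonnegative double sum, and take the supremum over $\{B_k\}$. The countable-partition formula for $|\lambda|(A)$ then follows: any countable partition $\{A_j\}$ gives $\sum_j |\lambda(A_j)| \leq \sum_j |\lambda|(A_j) = |\lambda|(A)$, so the supremum over countable partitions is $\leq |\lambda|(A)$, while it is trivially $\geq |\lambda|(A)$.

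For (iii), the heart is the finiteness $\|\lambda\| < \infty$, proved by contradiction via a splitting lemma: if $|\lambda|(E) = \infty$, then $E = E' \sqcup E''$ with $|\lambda(E')| > 1$ and $|\lambda|(E'') = \infty$. Indeed, choose a finite partition $\{B_k\}$ of $E$ with $\sum_k |\lambda(B_k)| > \pi(1 + |\lambda(E)|)$ and use the $\pi$-trick to get $A \subset E$ with $|\lambda(A)| > 1 + |\lambda(E)|$; then $|\lambda(E\setminus A)| \geq |\lambda(A)| - |\lambda(E)| > 1$ as well, and since $|\lambda|(A) + |\lambda|(E\setminus A) = \infty$ by (i), one of $A$, $E\setminus A$ has infinite variation and can be taken as $E''$. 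Iterating from $S$ produces a disjoint sequence $\{E'_n\}$ with $|\lambda(E'_n)| > 1$ for every $n$; but countable additivity of $\lambda$ forces $\sum_n \lambda(E'_n)$ to converge, hence $\lambda(E'_n) \to 0$, a contradiction. For completeness of $L^1(\sB)$: a Cauchy sequence $\{\lambda_m\}$ converges pointwise on $\sB$ to a finitely additive $\lambda$ (as $|\lambda_m(A) - \lambda_k(A)| \leq \|\lambda_m - \lambda_k\|$); letting $k\to\infty$ in $\sum_j |\lambda_m(A_j) - \lambda_k(A_j)| \leq \|\lambda_m - \lambda_k\|$ over finite partitions shows $\lambda - \lambda_m$ has finite total variation and $\lambda_m \to \lambda$ in that norm; and for $B_n \downarrow \emptyset$ the estimate $|\lambda(B_n)| \leq \|\lambda - \lambda_m\| + |\lambda_m(B_n)|$, with $m$ fixed large first, gives $\lambda(B_n) \to 0$, so $\lambda \in L^1(\sB)$.

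I expect the only genuinely delicate point to be the splitting lemma and the bookkeeping that turns its iteration into the contradiction; everything else is triangle-inequality manipulation together with the single averaging computation behind the constant $\pi$.
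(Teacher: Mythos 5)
Your proof is correct. The paper states this proposition without proof, as a collection of standard facts, and your argument is essentially the intended one: your averaging computation behind the constant $\pi$ is exactly the $d=2$ case ($C_2 = 1/\pi$) of the paper's Half Average Inequality proved immediately after the statement, and the splitting/exhaustion argument for finiteness of the total variation together with the Cauchy-sequence argument for completeness of $L^1(\sB)$ follow the classical route.
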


\begin{Lemma}[Half Average Inequality]
For each positive $d \in \N$, there exists $C_d > 0$ 
($C_1 = 1$, $C_2 = 1/\pi$, $C_3 = 1/4$ and so on) with the following property: 
Given a finite family $\{ v_j \in \R^d \}$ of euclidean vectors, we can find 
a finite subset $J \subset \{1, \cdots, n\}$ so that 
$\sum_{j=1}^n |v_j| \leq |\sum_{j \in J} v_j|/C_d$.  
\end{Lemma}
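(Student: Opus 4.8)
The plan is to pick a single good ``direction'' $u$ on the unit sphere $S^{d-1}\subset\R^d$ and to let $J=J_u$ be the set of those indices $j$ for which $\langle v_j,u\rangle\geq 0$. The point is the elementary pointwise bound
\[
\Bigl| \sum_{j\in J_u} v_j \Bigr| \;\geq\; \Bigl\langle \sum_{j\in J_u} v_j,\, u \Bigr\rangle \;=\; \sum_{j=1}^n \langle v_j,u\rangle^+,
\]
where $x^+=\max(x,0)$; so it suffices to exhibit one $u$ for which $\sum_{j=1}^n \langle v_j,u\rangle^+$ is a fixed positive fraction of $\sum_{j=1}^n|v_j|$.

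Such a $u$ will be found by averaging. Let $\sigma$ be the rotation-invariant probability measure on $S^{d-1}$. By linearity and by invariance of $\sigma$ under the orthogonal group, for any fixed unit vector $e$ one has $\int_{S^{d-1}}\langle v_j,u\rangle^+\,d\sigma(u)=|v_j|\int_{S^{d-1}}\langle e,u\rangle^+\,d\sigma(u)$, and the substitution $u\mapsto -u$ identifies this common value with $\tfrac12\int_{S^{d-1}}|\langle e,u\rangle|\,d\sigma(u)=:C_d$. Summing over $j$ gives $\int_{S^{d-1}}\bigl(\sum_{j=1}^n\langle v_j,u\rangle^+\bigr)\,d\sigma(u)=C_d\sum_{j=1}^n|v_j|$. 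Since $u\mapsto\sum_{j=1}^n\langle v_j,u\rangle^+$ is continuous on the compact space $S^{d-1}$, it attains at some $u_0$ a value at least equal to its $\sigma$-average, and then $J:=J_{u_0}$ satisfies $\sum_{j=1}^n|v_j|\leq|\sum_{j\in J}v_j|/C_d$. Here $C_d>0$ because $|\langle e,\cdot\rangle|$ is a nonnegative function not vanishing $\sigma$-almost everywhere.

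It remains to evaluate $C_d$. For $d=1$, $S^0=\{\pm1\}$ carries the uniform two-point measure, the construction is just the splitting of $\{v_j\}$ into its nonnegative and its negative part, and one reads off $C_1=\tfrac12$. For $d\geq 2$, pushing $\sigma$ forward by $u\mapsto\langle e,u\rangle$ shows that the polar angle $\phi=\arccos\langle e,u\rangle$ has density proportional to $\sin^{d-2}\phi$ on $[0,\pi]$, so that
\[
C_d=\frac12\cdot\frac{\int_0^\pi|\cos\phi|\,\sin^{d-2}\phi\,d\phi}{\int_0^\pi \sin^{d-2}\phi\,d\phi},
\]
which equals $\tfrac1\pi$ for $d=2$ and $\tfrac14$ for $d=3$, as in the statement.

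I do not anticipate a genuine difficulty: the argument is a single averaging step, and the only points deserving care are the reduction by rotational invariance (legitimate precisely because $\sigma$ is orthogonally invariant) and the bookkeeping in the degenerate case $d=1$. If one prefers to avoid spherical integrals, a cruder route gives the weaker constant $C_d=1/(2d)$: apply the one-dimensional case to the coordinate $k$ maximizing $\sum_{j=1}^n|v_j^{(k)}|$ and combine it with $\sum_{j=1}^n|v_j|\leq\sum_{j=1}^n\sum_{k=1}^d|v_j^{(k)}|\leq d\sum_{j=1}^n|v_j^{(k)}|$ together with $|\sum_{j\in J}v_j|\geq|\sum_{j\in J}v_j^{(k)}|$. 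Since the statement records the sharper values $1/\pi$ and $1/4$, however, it is the averaging argument that should be carried out.
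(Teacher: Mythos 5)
Your proof is correct and is essentially the paper's own argument: maximize $u \mapsto \sum_{j=1}^n \langle v_j,u\rangle^+$ over the compact sphere, take $J$ to be the indices with positive inner product against the maximizer, and bound the maximum from below by the spherical average, which by rotation invariance equals $C_d\sum_{j=1}^n |v_j|$ with $C_d$ the normalized half-average of $|\langle e,\cdot\rangle|$. The only (harmless) discrepancy is the normalization at $d=1$: with the uniform probability measure on $S^0$ your value $C_1=\tfrac12$ is the correct (and sharp) constant, while the paper's parenthetical list states $C_1=1$, which is a slip rather than a defect of your argument.
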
 

\begin{proof}
We may suppose that $v_j \not= 0$. 
For a unit vector $e$, set $(v_j,e)_+ = (v_j,e) \vee 0$, which is a continuous function of $e$. 
In view of the inequality
\[ 
\Bigl| \sum_{(v_j,e) > 0} v_j \Bigr| \geq \sum_{(v_j,e) > 0} (v_j,e) 
= \sum_{j=1}^n (v_j,e)_+, 
\]
let $e_0$ be a unit vector which maximizes the function $\sum_{j=1}^n (v_j,e)$ of $e$ and 
set $J = \{ j; (v_j,e_0) > 0\}$. Then 
$|\sum_{j \in J} v_j| \geq \sum_{j=1}^n (v_j,e_0)_+ \geq \sum_{j=1}^n (v_j,e)_+$ for any $e$ and have  
\[ 
\Bigl| \sum_{j \in J} v_j \Bigr| \geq \sum_{j=1}^n \int_{|e| = 1} (v_j,e)_+\, de 
= C_d \sum_{j=1}^n |v_j| 
\quad 
\text{with} 
\ 
\int_{|e|=1} (v,e)_+\, de = C_d |v| 
\ \text{for any $v$.}
\] 
\end{proof} 

\begin{Definition}
Let $\phi$ be a $V$-valued semi-measure on a Boolean algebra $\sB$ with $V$ a Banach space. 
The \textbf{semi-variation} (\textbf{variation})\footnote{Warning: In literatures, 
semi-variation is denoted by $\|\ \|$, whereas $|\ |$ is used to indicate variation.}
of $\phi$ is a function $|\phi|:\sB \to [0,\infty]$ 
($|\!|\!|\phi|\!|\!|: \sB \to [0,\infty]$) defined by 
\begin{align*} 
|\phi|(A) &= \sup\{ |\langle v^*,\phi\rangle|(A); \| v^*\| \leq 1, v^* \in V^* \},\\ 
|\!|\!|\phi|\!|\!|(A) &= \sup\{ \sum_{j=1}^n \| \phi(A_j) \|; 
\text{$\{ A_j\}$ is a finite partition of $A$ with $A_j \in \sB$} \}. 
\end{align*}
Here $\langle v^*,\phi\rangle$ denotes a complex semi-measure $v^*(\phi(A))$ ($A \in \sB$). 

A semi-measure is said to be \textbf{bounded} (resp.~strongly bounded) 
if $|\phi|$ (resp.~$|\!|\!| \phi|\!|\!|$) is bounded. 
We say that $|\phi|$ is squeezing if $\lim_{n \to \infty} |\phi|(A_n) = 0$ for any disjoint sequence 
$\{ A_n\}_{n \geq 1}$ in $\sB$. 
\end{Definition}

\begin{Proposition}\label{variations}~ 
  \begin{enumerate}
  \item The variation of a $V$-valued measure is a positive measure. 
\item 
A $V$-valued strongly bounded semi-measure $\phi$ defined on a $\sigma$-algebra is 
countably additive if $|\!|\!| \phi|\!|\!|$ is countably additive. 
\item The semi-variation of a $V$-valued semi-measure is monotone, subadditive; 
$|\phi|(A) \leq |\phi|(A \cup B) \leq |\phi|(A) + |\phi|(B)$ for $A, B \in \sB$, and satisfies 
the inequality 
\[ 
\sup \{ \| \phi(A)\|; A \subset B, A \in \sB\} 
\leq |\phi|(B) \leq \pi \sup \{ \| \phi(A)\|; A \subset B, A \in \sB\}, 
\quad 
B \in \sB. 
\] 
Consequently the range of a semi-measure $\phi$ is a bounded subset of $V$ 
if and only if $\phi$ is bounded, i.e., $|\phi|(S) < \infty$. 
\item 
A semi-measure is bounded if it is squeezing. In particular, measures are bounded. 
\item 
A semi-measure $\phi$ is squeezing if and only if so is $|\phi|$. 
  \end{enumerate}
\end{Proposition}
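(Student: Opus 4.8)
The plan is to derive all five assertions from Proposition~\ref{complex}(i), applied to the scalar semi-measures $\langle v^*,\phi\rangle$ with $v^*\in V_1^*$, and I would prove them in the order (iii), (i), (ii), (iv), (v) since (iii) feeds all of the others. For (iii): for each $v^*\in V_1^*$ the variation $|\langle v^*,\phi\rangle|$ is nonnegative and additive by Proposition~\ref{complex}(i), hence monotone and subadditive, and passing to the supremum over $v^*$ transfers monotonicity and subadditivity to $|\phi|$. For the two-sided estimate I would use $\|\phi(A)\| = \sup\{|v^*(\phi(A))|; v^*\in V_1^*\}$: when $A\subset B$ we have $|v^*(\phi(A))| = |\langle v^*,\phi\rangle(A)| \le |\langle v^*,\phi\rangle|(A) \le |\langle v^*,\phi\rangle|(B) \le |\phi|(B)$, and a double supremum gives the left inequality; the right one, $|\langle v^*,\phi\rangle|(B) \le \pi\sup\{|v^*(\phi(A))|; A\subset B\} \le \pi\sup\{\|\phi(A)\|; A\subset B\}$, is the scalar inequality of Proposition~\ref{complex}(i) used inside the supremum over $v^*$. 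The ``consequently'' clause then drops out of the displayed inequality.

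For (i) I would check countable additivity of $|\!|\!|\phi|\!|\!|$ directly on a partition $A = \bigsqcup_n A_n$ in the $\sigma$-algebra: concatenating finite partitions of $A_1,\dots,A_N$ with the single extra block $\bigsqcup_{n>N}A_n$ and then discarding that block yields $|\!|\!|\phi|\!|\!|(A)\ge\sum_{n\le N}|\!|\!|\phi|\!|\!|(A_n)$, hence $\ge\sum_n|\!|\!|\phi|\!|\!|(A_n)$; conversely, for a finite partition $\{B_j\}$ of $A$, countable additivity of $\phi$ gives $\|\phi(B_j)\|\le\sum_n\|\phi(B_j\cap A_n)\|$, and summing over $j$, interchanging the two nonnegative sums, and using that $\{B_j\cap A_n\}_j$ partitions $A_n$ gives $\sum_j\|\phi(B_j)\|\le\sum_n|\!|\!|\phi|\!|\!|(A_n)$, whence $|\!|\!|\phi|\!|\!|(A)\le\sum_n|\!|\!|\phi|\!|\!|(A_n)$. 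For (ii), the hypothesis that $|\!|\!|\phi|\!|\!|$ is countably additive together with strong boundedness makes $|\!|\!|\phi|\!|\!|$ a finite positive measure, so its tails vanish: $|\!|\!|\phi|\!|\!|(\bigsqcup_{n>N}A_n)\to 0$; since $\|\phi(E)\|\le|\!|\!|\phi|\!|\!|(E)$ trivially, the identity $\phi(A)-\sum_{n\le N}\phi(A_n) = \phi(\bigsqcup_{n>N}A_n)$ then forces $\phi(A)=\sum_n\phi(A_n)$.

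The substance is in (iv), for which I would isolate a splitting lemma: if $|\phi|(A)=\infty$ and $c>0$, then $A = A'\sqcup A''$ for some $A',A''\in\sB$ with $\|\phi(A')\|\ge c$ and $|\phi|(A'')=\infty$. To prove it, (iii) gives $\sup\{\|\phi(E)\|; E\subset A, E\in\sB\}=\infty$, so I pick $E\subset A$ with $\|\phi(E)\|\ge c+\|\phi(A)\|$; subadditivity of $|\phi|$ forces $|\phi|(E)=\infty$ or $|\phi|(A\setminus E)=\infty$, and in the first case I take $A'=A\setminus E$ (using $\|\phi(A\setminus E)\|\ge\|\phi(E)\|-\|\phi(A)\|\ge c$), in the second $A'=E$. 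Then, supposing $\phi$ squeezing but $|\phi|(S)=\infty$, I iterate the lemma from $C_0=S$, applying it to $C_{n-1}$ with $c=n$ to get $C_{n-1}=B_n\sqcup C_n$ with $\|\phi(B_n)\|\ge n$ and $|\phi|(C_n)=\infty$; since $B_m\subset C_{m-1}\subset C_n$ for $m>n$ while $B_n\cap C_n=\emptyset$, the $B_n$ are pairwise disjoint, so $\{B_n\}$ contradicts the squeezing property. Hence $\phi$ is bounded; and a measure, being continuous on a $\sigma$-algebra, is squeezing, hence bounded. I expect the bookkeeping in this iteration — making the extracted $B_n$ genuinely pairwise disjoint while keeping $|\phi|$ infinite on the remainder — to be the only delicate point.

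For (v) I would observe that $|\phi|$ squeezing implies $\phi$ squeezing at once, since $\|\phi(A_n)\|\le|\phi|(A_n)$ by (iii); conversely, if $|\phi|$ is not squeezing, choose a disjoint sequence $\{A_n\}$ and $\epsilon>0$ with $|\phi|(A_n)\ge\epsilon$, and apply the right inequality of (iii) to pick $E_n\subset A_n$ with $\|\phi(E_n)\|\ge\epsilon/(2\pi)$; the $E_n$ are disjoint, so $\phi$ is not squeezing either. Apart from the splitting lemma of (iv), everything is routine manipulation layered on Proposition~\ref{complex}(i).
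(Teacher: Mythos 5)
Your proposal is correct and follows essentially the same route as the paper: (i)--(iii) by the standard arguments from Proposition~\ref{complex}(i) and Hahn--Banach, (v) via the $\pi$-inequality exactly as in the paper, and (iv) by exhausting a set of infinite semi-variation to produce a disjoint sequence that violates the squeezing property. Your splitting lemma in (iv), which peels off disjoint blocks $B_n$ with $\|\phi(B_n)\|\ge n$ while keeping $|\phi|$ infinite on the remainder, is just a repackaging of the paper's decreasing sequence $A_n$ with $|\phi|(A_n)=\infty$ and $\|\phi(A_{n+1})\|\ge 1+\|\phi(A_n)\|$ (whose differences $A_n\setminus A_{n+1}$ play the role of your $B_n$), and the bookkeeping you worried about is handled correctly.
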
 

\begin{proof} (i) $\sim$ (iii) are consequences of definitions by standard arguments. 

(iv): 
Assume that $|\phi|(B) = \infty$ for some $B \in \sB$. 
Then, given any $r>0$, we can find $A \subset B$ in $\sB$ such that 
$\| \phi(A)\| \geq r$ and $\|\phi(B \setminus A)\| \geq r$. In fact, if we choose $A$ so that 
$\| \phi(A)\| \geq r + \| \phi(B)\|$, 
$\| \phi(A) \| = \| \phi(B) - \phi(B \setminus A) \| \leq \| \phi(B)\| + \| \phi(B \setminus A)\|$. 
By a squeezing argument, we obtain a decreasing sequence $\{ A_n\}_{n \geq 1}$ in $\sB$ satisfying 
$|\phi|(A_n) = \infty$ and $\| \phi(A_{n+1})\| \geq 1 + \| \phi(A_n)\|$ for $n \geq 1$. 
Thus, the squeezing property is violated for the disjoint sequence $\{ A_n \setminus A_{n+1} \}_{n \geq 1}$. 

(v): This is a consequence of (iii). The if part is trivial, whereas the only if part is checked as follows: 
If $|\phi|$ is not squeezing, there exist a disjoint sequence $\{ B_n\}$ and $\delta>0$ such that 
$|\phi|(B_n) \geq \delta$ for $n \geq 1$. Then, thanks to the $\pi$-inequality, 
we can find $A_n \subset B_n$ in $\sB$ so that $|\phi|(B_n) \leq \pi \|\phi(A_n)\| + 1/n$, 
which denies $\lim_{n \to \infty} \| \phi(A_n)\| = 0$. 
\end{proof}

\begin{Lemma}\label{variation}
\[ 
|\phi|(A) = \sup\{ \bigl\| \sum \alpha_j \phi(A_j) \bigr\|; 
\text{$\{ A_j\}$ is a finite partition of $A$ with $A_j \in \sB$ and $|\alpha_j| \leq 1$ with 
$\alpha_j \in \C$} \}. 
\]
\end{Lemma}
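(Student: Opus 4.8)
The plan is to show that the right-hand side, which I will call $\rho(A)$, coincides with $|\phi|(A)$ by proving the two inequalities separately; each is a one-line Hahn--Banach-type argument, since the two descriptions are just dual ways of measuring the same quantity.

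For $\rho(A)\le|\phi|(A)$, fix a finite partition $\{A_j\}$ of $A$ in $\sB$ and scalars $\alpha_j\in\C$ with $|\alpha_j|\le1$, and invoke Hahn--Banach to pick $v^*\in V_1^*$ with $v^*\bigl(\sum_j\alpha_j\phi(A_j)\bigr)=\bigl\|\sum_j\alpha_j\phi(A_j)\bigr\|$. Since $\langle v^*,\phi\rangle(A_j)=v^*(\phi(A_j))$ and the norm on the left is a nonnegative real, I would then estimate
\[
\Bigl\|\sum_j\alpha_j\phi(A_j)\Bigr\|=\sum_j\alpha_j\langle v^*,\phi\rangle(A_j)\le\sum_j|\langle v^*,\phi\rangle(A_j)|\le|\langle v^*,\phi\rangle|(A)\le|\phi|(A),
\]
the penultimate step being the definition of the variation of the complex semi-measure $\langle v^*,\phi\rangle$ applied to the partition $\{A_j\}$, and the last step the definition of $|\phi|(A)$. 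Taking the supremum over partitions and over admissible scalars gives $\rho(A)\le|\phi|(A)$.

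For $|\phi|(A)\le\rho(A)$, fix $v^*\in V_1^*$. By definition $|\langle v^*,\phi\rangle|(A)$ is the supremum of $\sum_j|\langle v^*,\phi\rangle(A_j)|=\sum_j|v^*(\phi(A_j))|$ over finite partitions $\{A_j\}$ of $A$ in $\sB$. For a given partition, choose $\alpha_j\in\C$ with $|\alpha_j|\le1$ (taking $\alpha_j$ unimodular, and $\alpha_j=0$ when $v^*(\phi(A_j))=0$) so that $\alpha_j v^*(\phi(A_j))=|v^*(\phi(A_j))|$; then, using $\|v^*\|\le1$,
\[
\sum_j|v^*(\phi(A_j))|=v^*\Bigl(\sum_j\alpha_j\phi(A_j)\Bigr)\le\Bigl\|\sum_j\alpha_j\phi(A_j)\Bigr\|\le\rho(A).
\]
Taking the supremum over partitions yields $|\langle v^*,\phi\rangle|(A)\le\rho(A)$, and then the supremum over $v^*\in V_1^*$ gives $|\phi|(A)\le\rho(A)$. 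Combining the two inequalities proves the lemma. I do not expect any real obstacle here: the only points requiring care are that Hahn--Banach furnishes the norming functional in the first direction and that in the second direction the phases of the complex numbers $v^*(\phi(A_j))$ can be absorbed into the coefficients $\alpha_j$.
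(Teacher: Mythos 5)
Your proof is correct and follows essentially the same route as the paper: bound $\bigl\|\sum_j\alpha_j\phi(A_j)\bigr\|$ by $|\langle v^*,\phi\rangle|(A)\le|\phi|(A)$ using functionals in $V_1^*$, and conversely absorb the phases of $v^*(\phi(A_j))$ into unimodular coefficients to dominate $\sum_j|v^*(\phi(A_j))|$ by the supremum on the right. The only cosmetic difference is that you invoke a norming functional via Hahn--Banach where the paper simply takes the supremum over $v^*\in V_1^*$; both rest on the same duality fact.
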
 

\begin{proof}
\[
\Bigl| v^*(\sum \alpha_j \phi(A_j)) \Bigr| \leq \sum |v^*(\phi(A_j))| 
= \sum e^{i\theta_j} v^*(\phi(A_j)) 
= \Bigl| v^*(\sum e^{i\theta_j} \phi(A_j)) \Bigr|. 
\] 
From the first inequality, $|v^*(\sum \alpha_j \phi(A_j))| \leq |v^*\phi|(A)$ and then 
$\| \sum \alpha_j \phi(A_j)\| \leq |\phi|(A)$. From the equalities, 
$\sum |v^*(\phi(A_j))| \leq \| \sum e^{i\theta_j} \phi(A_j)\| \leq \sup \| \sum \alpha_j \phi(A_j)\|$ and then 
$|\phi|(A) \leq \sup \| \sum \alpha_j \phi(A_j)\|$. 
\end{proof} 

\begin{Corollary} Semi-variation remains invariant under taking 
composition with an isometric embedding. 
In particular, $|\widehat\phi| = |\phi|$. Here $\widehat\phi$ denotes 
the composition of $\phi$ with the canonical embedding $V \to \ell^\infty(V^*_1)$. 
\end{Corollary}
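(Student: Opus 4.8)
The plan is to read the statement straight off Lemma~\ref{variation}. That lemma characterizes the semi-variation intrinsically, purely in terms of norms of elements of the form $\sum_j \alpha_j \phi(A_j)$ where $\{A_j\}$ ranges over finite partitions of $A$ in $\sB$ and $\alpha_j \in \C$ with $|\alpha_j| \le 1$. Since such an expression only involves the norm of $V$, and an isometric embedding preserves norms, the two suprema defining $|\phi|(A)$ and $|\iota\phi|(A)$ are literally the same set of numbers.

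In detail: let $\iota \colon V \to W$ be an isometric linear map of Banach spaces and $\phi \colon \sB \to V$ a semi-measure, so that $\iota\phi \colon \sB \to W$ is again a semi-measure. Fix $A \in \sB$. For any finite partition $\{A_j\}$ of $A$ in $\sB$ and any $\alpha_j \in \C$ with $|\alpha_j| \le 1$, linearity of $\iota$ gives $\sum_j \alpha_j (\iota\phi)(A_j) = \iota\bigl(\sum_j \alpha_j \phi(A_j)\bigr)$, and the isometry property gives $\bigl\| \sum_j \alpha_j (\iota\phi)(A_j) \bigr\| = \bigl\| \sum_j \alpha_j \phi(A_j) \bigr\|$. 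Taking the supremum over all such partitions and coefficients and applying Lemma~\ref{variation} to both $\phi$ and $\iota\phi$ yields $|\iota\phi|(A) = |\phi|(A)$. For the final assertion, recall from the Notation section that the canonical map $V \to \ell^\infty(V^*_1)$, $v \mapsto \widehat v$, is an isometric embedding (this is exactly $\|v\| = \sup\{ |\langle v, v^*\rangle| ; v^* \in V^*_1 \}$), and $\widehat\phi$ is by definition the composite of $\phi$ with it; so the first part applied to this embedding gives $|\widehat\phi| = |\phi|$.

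There is essentially no obstacle, since Lemma~\ref{variation} has already done the work. The only point worth noting is why this detour is the right one: the original definition of $|\phi|$ is phrased through the dual space $V^*$, and working directly with it under an embedding $\iota$ would force one to verify that every $v^* \in V^*_1$ arises as $w^* \circ \iota$ for some $w^* \in W^*_1$ — true by Hahn--Banach, but an extra step one would rather avoid — whereas the intrinsic characterization of Lemma~\ref{variation} sidesteps it entirely.
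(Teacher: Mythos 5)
Your proof is correct and is exactly the argument the paper intends: the Corollary is stated immediately after Lemma~\ref{variation} precisely because the intrinsic characterization of $|\phi|(A)$ via norms of sums $\sum_j \alpha_j \phi(A_j)$ makes invariance under an isometric embedding immediate, and your application of it (including the remark that the canonical map $V \to \ell^\infty(V^*_1)$ is isometric) matches the paper's route. No gaps.
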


Let $\phi:\sB \to V$ be a semi-measure. For a simple function $f:S \to \C$, i.e., 
a function with $f(S)$ a finite set, we 
note that $f = \sum_{z \in f(S)} 1_{[f=z]}$ and set $\phi(f) = \sum_{z \in f(S)} \phi([f=z])$. 
Here $[f=z] = \{ s \in S; f(s) = z\}$. 
By subpartitioning and regrouping, the correspondence $f \mapsto \phi(f)$ is linear and the above lemma 
means $\| \phi\| = |\phi|(S)$. 
% , from the estimate 
% \[ 
% |\langle v^*,\phi(f)\rangle| \leq \| f\|_\infty \sum_j |\langle v^*, \phi(A_j) \rangle| 
% \leq \| f\|_\infty |\phi|(S)
% \]
% for $v^* \in V^*_1$, we have $\| \phi(f)\| \leq \| f\|_\infty |\phi|(S)$. 
Therefore, if $|\phi|(S) < \infty$,  
$\phi$ is continuously extended to the uniform closure $\C(\sB)$ of the set of simple functions. 
Note that $\C(\sB)$ is a commutative C*-algebra. When $\sB$ is a $\sigma$-algebra, 
$\C(\sB)$ is the set of bounded measurable functions and the obvious pairing 
$\C(\sB)\times L^1(\sB) \to \C$ gives rise to 
inclusions $\C(\sB) \subset L^1(\sB)^*$, $L^1(\sB) \subset \C(\sB)^*$. 

Conversely, any bounded linear map $\phi: \C(\sB) \to V$ arises in this way. 
Thus bounded semi-measures form a Banach space with respect to the norm $\| \phi\| = |\phi|(S)$. 

A sequence $\{f_n\}_{n \geq 1}$ of complex-valued functions on $S$ 
is said to \textbf{\boldmath$\mathbf\sigma$-converge} to a function $f$ on $S$ if 
$\{ f_n\}$ is uniformly bounded and $\lim_{n \to \infty} f_n(s) = f(s)$ for every 
$s \in S$. 
Let $\sB^\sigma$ be the $\sigma$-algebra generated by $\sB$. Then $\C(\sB^\sigma)$ is minimal 
among sets which contain $\C(\sB)$ and have the property of being closed under $\sigma$-convergence. 
A $V$-valued measure $\phi$ on a $\sigma$-algebra $\sB$ is $\sigma$-continuous 
in the sense that, if $f_n \in \C(\sB)$ $\sigma$-converges to $f \in \C(\sB)$, 
then $\phi(f_n) \to \phi(f)$ in the weak topology. 

% Let $\phi: \sB \to V$ be a semi-measure. 

Consider a set $\Lambda$ of complex semi-measures on a Boolean algebra $\sB$ and 
assume that it is bounded in the sense that $\sup \{ |\lambda|(S); \lambda \in \Lambda\} < \infty$. 
We introduce then 
% For a bounded set  $\Lambda$ of complex measures on a $\sigma$-algebra $\sB$,  define 
a bounded linear map $\phi_\Lambda: \C(\sB) \to \ell^\infty(\Lambda)$ by 
$\phi_\Lambda(f): \lambda \mapsto \lambda(f) \in \C$. 
From mutual estimates
\[ 
\| \sum \alpha_j \phi_\Lambda(A_j)\| 
= \sup_{\lambda \in \Lambda} |\sum \alpha_j \lambda(A_j)| 
\leq \sup_{\lambda \in \Lambda} |\lambda|(A) 
= \sup_{\lambda \in \Lambda} |\langle \delta_\lambda,\phi_\Lambda\rangle|(A) \leq |\phi_\Lambda|(A), 
\]
we see that $|\phi_\Lambda|(A) = \sup\{ |\lambda|(A); \lambda \in \Lambda\}$ for $A \in \sB$. 
In particular, 
\[ 
\| \phi_\Lambda \| = \sup\{ |\lambda(f)|; \lambda \in \Lambda, f \in \C(\sB)_1\} = 
\sup\{ \|\lambda\|; \lambda \in \Lambda\} < \infty 
\] 
and $|\phi_\Lambda| = |\phi_{|\Lambda|}|$. 
Here we set $|\Lambda| = \{ |\lambda|; \lambda \in \Lambda\}$, which is again a bounded set of 
semi-measures in view of the $\pi$-inequality. 
% We say that $\Lambda$ is uniformly countably additive if $\phi_\Lambda$ is countably additive: 
% For a disjoint sequence $\{ A_n\}$ in $\sB$, 
% \[ 
% \lim_{m \to \infty} \sup\{ |\lambda(\bigsqcup_{n \geq m} A_n)|; \lambda \in \Lambda\} = 0. 
% \] 

\begin{Proposition}\label{squeezing}
Consider the following conditions on a bounded set $\Lambda$ of complex semi-measures on a 
Boolean algebra $\sB$. 
\begin{enumerate}
\item $\phi_\Lambda$ is squeezing. 
\item $\phi_\Lambda$ is a measure. 
\item $\phi_{|\Lambda|}$ is squeezing. 
\item $\phi_{|\Lambda|}$ is a measure. 
\end{enumerate}
(i) and (iii) are equivalent. 
If $\sB$ is a $\sigma$-algebra and $\Lambda$ consists of complex measures,  
all the conditions (i) $\sim$ (iv) are equivalent. 
\end{Proposition}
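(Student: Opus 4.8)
The plan is to deduce the statement entirely from facts already assembled. The equivalence of (i) and (iii) holds over an arbitrary Boolean algebra: by Proposition~\ref{variations}(v) a semi-measure is squeezing precisely when its semi-variation is, so applying this to $\phi_\Lambda$ and to $\phi_{|\Lambda|}$ and using the identity $|\phi_\Lambda| = |\phi_{|\Lambda|}|$ established above gives
\[
\phi_\Lambda \text{ squeezing}
\iff |\phi_\Lambda| \text{ squeezing}
\iff |\phi_{|\Lambda|}| \text{ squeezing}
\iff \phi_{|\Lambda|} \text{ squeezing}.
\]

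Now suppose $\sB$ is a $\sigma$-algebra and every $\lambda \in \Lambda$ is a complex measure. A measure on a $\sigma$-algebra is continuous, and a continuous semi-measure on a $\sigma$-algebra is squeezing, so (ii) $\Rightarrow$ (i) is immediate; the same reasoning gives (iv) $\Rightarrow$ (iii) once one observes that $|\Lambda|$ too consists of measures, since by Proposition~\ref{complex}(ii),(iii) the variation $|\lambda|$ of a complex measure on a $\sigma$-algebra is a finite positive measure. In view of (i) $\Leftrightarrow$ (iii) it then suffices to prove (i) $\Rightarrow$ (ii) and (iii) $\Rightarrow$ (iv), and both follow from one observation: if $\Gamma$ is a bounded set of complex measures on the $\sigma$-algebra $\sB$ with $\phi_\Gamma$ squeezing, then $\phi_\Gamma$ is a measure. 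Indeed, given a countable partition $A = \bigsqcup_{n \geq 1} A_n$ in $\sB$, the squeezing property together with finite additivity makes $\{ \phi_\Gamma(A_n) \}_{n \geq 1}$ summable in $\ell^\infty(\Gamma)$, as noted above; the index set being countable, the partial sums $\sum_{n=1}^{N} \phi_\Gamma(A_n)$ converge in norm, hence pointwise on $\Gamma$, to $\sum_{n \geq 1} \phi_\Gamma(A_n)$. But evaluated at $\gamma \in \Gamma$ the partial sum is $\sum_{n=1}^{N} \gamma(A_n)$, which tends to $\gamma(A)$ by countable additivity of $\gamma$; hence $\sum_{n \geq 1} \phi_\Gamma(A_n) = \phi_\Gamma(A)$, so $\phi_\Gamma$ is countably additive. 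Applying this with $\Gamma = \Lambda$ gives (i) $\Rightarrow$ (ii) and with $\Gamma = |\Lambda|$ gives (iii) $\Rightarrow$ (iv), which closes the cycle of equivalences.

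The argument is largely bookkeeping; the only step I expect to need care --- and the only place the hypothesis that $\Lambda$ (hence $|\Lambda|$) consists of measures really enters --- is the pointwise identification of $\sum_n \phi_\Gamma(A_n)$ with $\phi_\Gamma(A)$, which rests on the countable additivity of each individual $\gamma$, with the passage to $|\Lambda|$ additionally using that the variation of a complex measure on a $\sigma$-algebra is again a finite measure.
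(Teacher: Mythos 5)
Your proof is correct, but it follows a genuinely different route from the paper's. For (i) $\Leftrightarrow$ (iii) the paper argues directly: (iii) $\Rightarrow$ (i) from $|\lambda(A)| \leq |\lambda|(A)$, and (i) $\Rightarrow$ (iii) by a gliding-hump selection of $\lambda_n \in \Lambda$ and $A_n \subset B_n$ via the $\pi$-inequality; you instead quote Proposition~\ref{variations}(v) together with the identity $|\phi_\Lambda| = |\phi_{|\Lambda|}|$ established just before the Proposition, which packages the same $\pi$-inequality content into already-proved facts and shortens the argument. For (i) $\Rightarrow$ (ii) the paper again works by contradiction: assuming failure of countable additivity it inductively extracts $\lambda_m \in \Lambda$ and blocks $B_m = \cup_{n_m \leq n < n_{m+1}} A_n$ with $|\lambda_m(B_m)| \geq \delta/2$, contradicting squeezing; you prove countable additivity directly, using the remark from Section~1 that squeezing plus finite additivity gives summability of $\{\phi_\Gamma(A_n)\}$ in the Banach space $\ell^\infty(\Gamma)$, and then identifying the norm limit of the partial sums with $\phi_\Gamma(A)$ by pointwise evaluation at each $\gamma$, which is where the countable additivity of the individual members enters --- exactly the same place it enters (implicitly, when choosing $n_{m+1}$) in the paper's contradiction argument. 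Treating (iii) $\Rightarrow$ (iv) as the same lemma applied to $\Gamma = |\Lambda|$, with Proposition~\ref{complex}(ii),(iii) guaranteeing that each $|\lambda|$ is a finite positive measure, matches the paper's ``special case'' remark but makes the needed hypothesis explicit. Your version buys directness and reuse of earlier results; the paper's buys a self-contained quantitative argument. One cosmetic point: the observation that $|\Lambda|$ consists of measures is not needed for (iv) $\Rightarrow$ (iii) (any measure on a $\sigma$-algebra is squeezing); it is needed only when you apply your lemma to $\Gamma = |\Lambda|$, where you do cite it correctly.
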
 

\begin{proof}
(ii) $\Longrightarrow$ (i) and (iv) $\Longrightarrow$ (iii) are trivial, whereas 
(i) follows from (iii) in view of $|\lambda(A)| \leq |\lambda|(A)$ 
and (iii) $\Longrightarrow$ (iv) is a special case of (i) $\Longrightarrow$ (ii). 

(i) $\Longrightarrow$ (ii): If $\phi_\Lambda$ is not countably additive, we have a disjoint sequence 
$\{ A_n\}$ in $\sB$ and $\delta > 0$ such that $\| \phi_\Lambda(\sqcup_{n \geq m} A_n)\| \geq \delta$ for all $m \geq 1$. 
Then we can inductively find a sequence $\lambda_m \in \Lambda$ and a subsequence $n_1 < n_2 < \cdots$ so that 
$|\lambda_m(\sqcup_{n_m \leq n < n_{m+1}} A_n)| \geq \delta/2$. Now the disjoint sequence $B_m = \cup_{n_m \leq n < n_{m+1}} A_n$ 
satisfies $\| \phi_\Lambda(B_m) \| \geq |\lambda_m(B_m)| \geq \delta/2$ and 
violates the squeezing property of $\phi_\Lambda$. 

(i) $\Longrightarrow$ (iii): If $\phi_{|\Lambda|}$ is not squeezing, we have a disjoint sequence 
$\{ B_n\}$ in $\sB$ and $\delta > 0$ such that $\| \phi_{|\Lambda|}(B_n) \| \geq \delta$. We can therefore find 
a sequence $\lambda_n \in \Lambda$ so that $|\lambda_n|(B_n) \geq \delta/2$ and then, by the $\pi$-inequality, 
a sequence $A_n \subset B_n$ in $\sB$ fulfilling $|\lambda_n|(B_n) \leq \pi |\lambda_n(A_n)| + \delta/3$. 
Now the disjoint sequence $\{ A_n\}$ satisfies $\| \phi_\Lambda(A_n)\| \geq |\lambda_n(A_n)| \geq \delta/6$ and 
denies the squeezing property of $\phi_\Lambda$. 
\end{proof}

\begin{Definition}
Let $\mu$ be a finite positive semi-measure on a Boolean-algebra $\sB$.  
A vector semi-measure $\phi$ on $\sB$ is said to be \textbf{{\boldmath$\mu$}-continuous} if 
$\forall \epsilon > 0, \exists \delta > 0, \forall A \in \sB, 
\mu(A) \leq \delta \Longrightarrow \| \phi(A)\| \leq \epsilon$.  
\end{Definition} 

\begin{Theorem}[Pettis1938] 
Suppose that both of $\phi$ and $\mu$ are measures ($\sB$ being a $\sigma$-algebra necessarily). 
Then $\phi$ is $\mu$-continuous if and only if $\mu(A) = 0$ ($A \in \sB$) implies $\phi(A) = 0$. 
\end{Theorem}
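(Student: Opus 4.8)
The nontrivial direction is that $\mu(A)=0 \Rightarrow \phi(A)=0$ implies $\mu$-continuity. The plan is a contradiction argument of ``escape to infinity'' type. Suppose $\phi$ is not $\mu$-continuous: there is $\epsilon>0$ and a sequence $A_n \in \sB$ with $\mu(A_n)\leq 2^{-n}$ but $\|\phi(A_n)\|\geq\epsilon$. The natural move is to pass to the $\mu$-negligible set $A=\bigcap_{m\geq 1}\bigcup_{n\geq m}A_n=\limsup A_n$; by Borel--Cantelli $\mu(A)=0$ (using countable additivity and monotonicity of $\mu$, which holds since $\mu$ is a finite positive measure), hence by hypothesis $\phi(A)=0$, and likewise $\phi(B)=0$ for every measurable $B\subset A$ with $\mu(B)=0$, i.e. for every measurable $B\subset A$. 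I would then try to derive a contradiction with $\|\phi(A_n)\|\geq\epsilon$ by showing the ``mass'' of the $A_n$ accumulates on $A$.

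To make that precise I would use the squeezing property of $\phi$, which holds by Proposition~\ref{variations}(iv) since $\phi$ is a measure, and of $|\phi|$ (Proposition~\ref{variations}(v)). Set $B_m=\bigcup_{n\geq m}A_n$, so $B_m \downarrow A$. Since $\phi$ is a measure and $B_m\setminus A \downarrow \emptyset$, continuity gives $\phi(B_m)\to\phi(A)=0$; more usefully, $|\phi|(B_m\setminus A)\to 0$ because $|\phi|$ is a positive measure (Proposition~\ref{variations}(i)) and $B_m \setminus A \downarrow \emptyset$. Now for each $n\geq m$ we have $A_n\setminus A\subset B_m\setminus A$, whence by monotonicity of semi-variation (Proposition~\ref{variations}(iii)) $\|\phi(A_n\setminus A)\|\leq|\phi|(A_n\setminus A)\leq|\phi|(B_m\setminus A)$. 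On the other hand $\phi(A_n)=\phi(A_n\cap A)+\phi(A_n\setminus A)$, and $A_n\cap A$ is a measurable subset of $A$, so $\mu(A_n\cap A)=0$ and thus $\phi(A_n\cap A)=0$ by hypothesis. Therefore $\|\phi(A_n)\|=\|\phi(A_n\setminus A)\|\leq|\phi|(B_m\setminus A)$ for all $n\geq m$, and letting $m\to\infty$ (with $n\geq m$) forces $\|\phi(A_n)\|\to 0$, contradicting $\|\phi(A_n)\|\geq\epsilon$.

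The converse direction is immediate: if $\phi$ is $\mu$-continuous and $\mu(A)=0$, then $\mu(A)\leq\delta$ for every $\delta>0$, so $\|\phi(A)\|\leq\epsilon$ for every $\epsilon>0$, giving $\phi(A)=0$.

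The step I expect to be the main obstacle — or at least the one needing the most care — is the passage from ``$\phi$ kills every $\mu$-null set'' to control of $|\phi|(B_m\setminus A)$ and hence of the tail $\|\phi(A_n\setminus A)\|$: one must know that $|\phi|$ itself vanishes on $\mu$-null sets (not just $\phi$), which follows from the $\pi$-inequality in Proposition~\ref{variations}(iii) applied to $\mu$-null sets together with the hypothesis, and then combine this with the countable additivity of the positive measure $|\phi|$ to get $|\phi|(B_m\setminus A)\downarrow |\phi|(\emptyset)=0$. Everything else is bookkeeping with finite additivity and monotonicity.
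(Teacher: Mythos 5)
Your overall strategy (pass to the tails $B_m=\bigcup_{n\ge m}A_n$, note $\mu(\bigcap_m B_m)=0$, and squeeze $\phi$ on $A_n\setminus A$) is close to the paper's, and the easy direction is fine, but the step you yourself flag as the delicate one contains a genuine gap. You justify $|\phi|(B_m\setminus A)\downarrow 0$ by asserting that $|\phi|$ is ``a positive measure (Proposition~\ref{variations}(i))'' and later by ``the countable additivity of the positive measure $|\phi|$''. That is not available: Proposition~\ref{variations}(i) concerns the variation $|\!|\!|\phi|\!|\!|$, not the semi-variation $|\phi|$, and the semi-variation is only monotone and subadditive (Proposition~\ref{variations}(iii)); it is not additive, so it is not a measure and has no countable additivity to invoke. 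Reading your argument with $|\!|\!|\phi|\!|\!|$ instead does not repair it, since a vector measure need not have finite variation (for $\phi(A)=1_A\in L^2(\mu)$ one has $|\!|\!|\phi|\!|\!|(B)=\infty$ for every non-null $B$, so $|\!|\!|\phi|\!|\!|(B_m\setminus A)\not\to0$). Also, the $\pi$-inequality only shows that $|\phi|$ vanishes on $\mu$-null sets, which says nothing about the sets $B_m\setminus A$: these need not be $\mu$-null. For each fixed $v^*$ one does get $|v^*\phi|(B_m\setminus A)\to0$, but $|\phi|(C)=\sup\{|v^*\phi|(C);v^*\in V_1^*\}$, and a supremum of quantities tending to $0$ pointwise need not tend to $0$; concretely, $\phi(A_n\setminus A)$ could a priori tend to $0$ only weakly while staying of norm $\ge\epsilon$, which is exactly what your argument must exclude.

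The fact you need --- $C_m\downarrow\emptyset$ in $\sB$ implies $|\phi|(C_m)\to0$ --- is true, but it is precisely the uniform countable additivity of the family $\{|v^*\phi|;\,v^*\in V_1^*\}$, and this is the nontrivial content the paper supplies through Proposition~\ref{squeezing}: composing with the embedding $V\to\ell^\infty(V_1^*)$ one writes $\phi=\phi_\Lambda$ with $\Lambda=\{v^*\phi;v^*\in V_1^*\}$, and since $\phi_\Lambda$ is a measure, so is $\phi_{|\Lambda|}$; then $|\phi|(C_m)=\|\phi_{|\Lambda|}(C_m)\|\to\|\phi_{|\Lambda|}(\bigcap_m C_m)\|=0$. (You cannot shortcut this by producing a control measure first, because in this paper Corollary~\ref{BDS} is deduced from Doubrovsky's theorem, which itself uses Pettis --- that route would be circular.) Once you insert the appeal to Proposition~\ref{squeezing} at this point, your proof closes up and becomes essentially the paper's argument, differing only in bookkeeping: the paper keeps the inequality $\|\phi_{|\Lambda|}(B_m)\|\ge\delta$, passes to the limit, and extracts a $\mu$-null $A\subset\bigcap_m B_m$ with $\phi(A)\ne0$, whereas you apply the hypothesis on $\bigcap_m B_m$ first and then shrink the tails.
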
 

\begin{proof}
We follow [DU] \S I.2. 
By taking the composition with the canonical embedding $V \to \ell^\infty(V_1^*)$, 
we may suppose that $\phi = \phi_\Lambda$, where $\Lambda = \{ v^*\phi; v^* \in V_1^*\}$ is a bounded 
subset of $L^1(\sB)$. 

If $\phi$ is not $\mu$-continuous, there exists $\delta>0$ and a sequence $A_n \in \sB$ such that 
$\| \phi(A_n)\| \geq \delta$ for $n \geq 1$ and $\sum_{n=1}^\infty \mu(A_n) < \infty$. Let $B_m = \cup_{n \geq m} A_n$ 
be a decreasing sequence in $\sB$. 
From the latter inequality, $\mu(B_m) \downarrow 0$, i.e., 
$\mu(\cap B_m) = 0$. 
From the former inequality, we have  
\begin{align*}
\| \phi_{|\Lambda|}(B_m) \| 
= \sup\{ |v^*\phi|(B_m); v^* \in V_1^*\}  
&\geq \sup\{ |v^*\phi|(A_m); v^* \in V_1^*\}\\
&\geq \sup\{ |\langle v^*,\phi(A_m)\rangle|; v^* \in V_1^*\} = \| \phi(A_m)\| \geq \delta.    
\end{align*}
Since $\phi_\Lambda$ is a measure, $\phi_{|\Lambda|}$ is also a measure by Proposition~\ref{squeezing} 
and the limit $m \to \infty$ is applied to get $\| \phi_{|\Lambda|}(\cap B_m)\| \geq \delta$. 
Therefore we can find a functional $v^* \in V_1^*$ such that $|v^*\phi|(\cap B_m) > \delta/2$ and 
then $A \subset \cap B_m$ in $\sB$ such that $\pi |\langle v^*,\phi(A)\rangle| > \delta/2$. 
Thus $\| \phi(A)\| \not= 0$, whereas $\mu(A) \leq \mu( \cap B_m) = 0$.  
\end{proof} 

\begin{Theorem}[Doubrovsky1947]%[\hbox{[DH]}, Theorem I.2.4]
Let $\Lambda$ be a bounded set of complex measures on a $\sigma$-algebra $\sB$. 
If $\phi_\Lambda: \sB \to \ell^\infty(\Lambda)$ is a measure, there exists a positive measure 
$\mu \in L^1(\sB)$ for which $\phi_\Lambda$ is $\mu$-continuous with a reverse inequality
$\mu(A) \leq |\phi_\Lambda|(A) = \sup\{ |\lambda|(A); \lambda \in \Lambda\}$. 
\end{Theorem}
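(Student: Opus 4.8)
The plan is to construct the control measure in the explicit shape
$\mu=\sum_{n\geq 1}2^{-n}\,|\lambda_n|$
for a suitably chosen countable subfamily $\{\lambda_n\}_{n\geq 1}\subseteq\Lambda$, and then to invoke the theorem of Pettis. Two properties of any such $\mu$ come for free. First, $\mu$ is a genuine finite positive measure: each $|\lambda_n|$ is a positive measure (Proposition~\ref{complex}) with $|\lambda_n|(S)\leq\|\phi_\Lambda\|<\infty$, and the partial sums of the series are Cauchy in the Banach space $L^1(\sB)$, so $\mu\in L^1(\sB)$ is positive with $\mu(S)\leq\|\phi_\Lambda\|$. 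Second, the reverse inequality is automatic, since $|\lambda_n|(A)\leq\sup_{\lambda\in\Lambda}|\lambda|(A)=|\phi_\Lambda|(A)$ and $\sum_n2^{-n}=1$ give $\mu(A)\leq|\phi_\Lambda|(A)$ for all $A\in\sB$. Granting this, by Pettis's theorem it only remains to arrange that $\mu(A)=0$ forces $\phi_\Lambda(A)=0$, i.e. $\lambda(A)=0$ for every $\lambda\in\Lambda$. Since $\mu(A)=0$ is equivalent to $|\lambda_n|(A)=0$ for all $n$, what I need is a countable $D=\{\lambda_n\}$ that is \emph{sufficient}, meaning: $|\lambda_n|(A)=0$ for all $n$ implies $\lambda(A)=0$ for every $\lambda\in\Lambda$.

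Thus the whole statement reduces to producing a sufficient countable subfamily, and this is the only step I expect to be delicate, precisely because the requirement has to hold against the entire, possibly uncountable, set $\Lambda$ at once — a naive countable exhaustion does not close. I would argue by contradiction with a transfinite recursion of length $\omega_1$. If no countable subfamily were sufficient, then, writing $D_\alpha=\{\lambda_\beta:\beta<\alpha\}$ (countable for $\alpha<\omega_1$), I could choose at each stage $\lambda_\alpha\in\Lambda$ and $A_\alpha\in\sB$ with $|\lambda_\beta|(A_\alpha)=0$ for all $\beta<\alpha$ but $\lambda_\alpha(A_\alpha)\neq0$, hence $|\lambda_\alpha|(A_\alpha)>0$. (If the recursion cannot even begin, every $\lambda\in\Lambda$ is the zero measure and $\mu=0$ already works.) For the contradiction I would fix an $n$ with $\{\alpha<\omega_1:|\lambda_\alpha|(A_\alpha)>1/n\}$ uncountable, pick from it an increasing sequence $\alpha_1<\alpha_2<\cdots$, and pass to the disjointified sets $E_j:=A_{\alpha_j}\setminus\bigcup_{k>j}A_{\alpha_k}$. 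These are pairwise disjoint members of $\sB$, and since $|\lambda_{\alpha_j}|$ is a positive (hence countably additive) measure vanishing on each $A_{\alpha_k}$ with $k>j$, it vanishes on their union, so $|\lambda_{\alpha_j}|(E_j)=|\lambda_{\alpha_j}|(A_{\alpha_j})>1/n$. Consequently $|\phi_\Lambda|(E_j)\geq|\lambda_{\alpha_j}|(E_j)>1/n$ for all $j$, so $|\phi_\Lambda|$ fails to be squeezing — impossible, because $\phi_\Lambda$ is a measure on the $\sigma$-algebra $\sB$, hence continuous, hence squeezing, whence $|\phi_\Lambda|$ is squeezing by Proposition~\ref{variations}.

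Once a sufficient countable $D=\{\lambda_n\}$ is at hand, I would set $\mu=\sum_{n\geq1}2^{-n}|\lambda_n|$ and finish as sketched: $\mu$ is a finite positive measure with $\mu(A)\leq|\phi_\Lambda|(A)=\sup\{|\lambda|(A):\lambda\in\Lambda\}$, and $\mu(A)=0$ gives $|\lambda_n|(A)=0$ for all $n$, hence $\lambda(A)=0$ for all $\lambda\in\Lambda$, i.e. $\phi_\Lambda(A)=0$ in $\ell^\infty(\Lambda)$; as $\phi_\Lambda$ and $\mu$ are both measures, Pettis's theorem promotes this to $\mu$-continuity of $\phi_\Lambda$. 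The routine parts are the verification that $\mu$ is a measure and the dictionary steps (semivariation of $\phi_\Lambda$, and measure $\Rightarrow$ continuous $\Rightarrow$ squeezing) that are already in place; the genuine obstacle is the transfinite recursion together with the disjointification $E_j=A_{\alpha_j}\setminus\bigcup_{k>j}A_{\alpha_k}$, which is what lets us convert the failure of sufficiency into a violation of the squeezing property.
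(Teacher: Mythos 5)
Your proof is correct, but the crucial step is argued along a genuinely different route from the paper's. Both proofs share the same skeleton: extract a countable subfamily of $\Lambda$ whose variations control all of $\Lambda$, set $\mu$ equal to a weighted $\ell^1$-sum of those variations (so the reverse inequality $\mu(A)\leq|\phi_\Lambda|(A)=\sup_\lambda|\lambda|(A)$ is automatic), and let Pettis upgrade the null-set implication to $\mu$-continuity. Where you differ is in producing that subfamily. The paper proves, for each $\epsilon>0$, a finite ``compactness'' statement: there is a finite $F\subset\Lambda$ such that $|\lambda|(A)=0$ for all $\lambda\in F$ forces $|\lambda|(A)\leq\epsilon$ for all $\lambda\in\Lambda$; this is obtained by an ordinary $\omega$-induction producing $\lambda_n$, $A_n$, followed by a limit argument on the decreasing sets $B_m=\cup_{n\geq m}A_n$ and $B_\infty=\cap_m B_m$, using that $\phi_{|\Lambda|}$ is countably additive (Proposition~\ref{squeezing}); the control measure is then built from the countably many finite sets $F_n$. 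You instead prove directly the stronger exact-nullity statement (a countable $D$ with $D$-null $\Rightarrow$ $\Lambda$-null) by a transfinite recursion of length $\omega_1$, a cardinality pigeonhole to fix a level $1/n$, and the disjointification $E_j=A_{\alpha_j}\setminus\bigcup_{k>j}A_{\alpha_k}$, contradicting the squeezing of $|\phi_\Lambda|$ (measure $\Rightarrow$ continuous $\Rightarrow$ squeezing, and Proposition~\ref{variations}(v)); all of these steps check out, including $|\lambda_{\alpha_j}|(E_j)=|\lambda_{\alpha_j}|(A_{\alpha_j})$ and the disjointness of the $E_j$. What each buys: the paper's sequential argument is more economical set-theoretically (no uncountable recursion, only countable choices) and yields the quantitative finite-$F$, $\epsilon$-version, which is the classical uniform-absolute-continuity-type lemma; your argument is conceptually shorter and lands immediately on a countable sufficient subfamily, at the price of invoking $\omega_1$ and a stronger use of choice, while exploiting exactly the same functional-analytic input (squeezing of the semivariation of a measure on a $\sigma$-algebra). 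Note also that the paper's $\bigcup_n F_n$ is itself a sufficient countable subfamily in your sense, so the two intermediate statements are in the end equivalent.
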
 

\begin{proof} 
This is \hbox{[DH]}, Theorem I.2.4. 
% The argument is quite similar to the proof of the Pettis theorem. 
We first establish a kind of compactness of a bounded $\Lambda$: 
Given any $\epsilon > 0$, we can find a finite subset $F \subset \Lambda$ such that 
if $A \in \sB$ satisfies $|\lambda|(A) = 0$ for $\lambda \in F$, then $|\lambda|(A) \leq \epsilon$ 
for any $\lambda \in \Lambda$. 

If not, there exists $\delta > 0$ such that for any $F \Subset \Lambda$, we can find $A \in \sB$ 
satisfying $|\lambda|(A) = 0$ for any $\lambda \in F$ but $|\nu|(A) \geq \delta$ for some 
$\nu \in \Lambda$. Then we can inductively choose a sequence $\lambda_n$ and $A_n \in \sB$ so that 
$|\lambda_1|(A_n)= \cdots = |\lambda_n|(A_n) = 0$ and $|\lambda_{n+1}|(A_n) \geq \delta$ 
for $n \geq 1$. 
Let $B_m = \cup_{n \geq m} A_n$ be a decreasing sequence and set $B_\infty = \cap_n B_n$. 
Since $|\lambda_m|(B_\infty) \leq |\lambda_m|(B_m) \leq \sum_{n \geq m}|\lambda_m|(A_n) = 0$ for $m \geq 1$ 
and $\lim_{m \to \infty} \sup_{\lambda \in \Lambda} |\lambda|(B_m \setminus B_\infty) = 0$, we have 
\[ 
0 = \lim_{m \to \infty} |\lambda_{m+1}|(B_m \setminus B_\infty) = \lim_{m \to \infty} |\lambda_{m+1}|(B_m), 
\]
which contradicts with $|\lambda_{m+1}|(B_m) \geq |\lambda_{m+1}|(A_m) \geq \delta$.  

Now we use the boundedness of $\Lambda$ again to construct a control measure $\mu$ over $\Lambda$. 
For each $n \geq 1$, choose 
$F_n \Supset \Lambda$ so that $\sum_{\lambda \in F_n} |\lambda|(A) = 0$ with $A \in \sB$ implies 
$|\lambda|(A) \leq 1/n$ for any $\lambda \in \Lambda$. 
Then the positive measure $\mu_n = \sum_{\lambda \in F_n} |\lambda|$ satisfies 
$\mu_n(A) \leq |F_n| |\phi_\Lambda|(A)$ for $A \in \sB$ and, if we define 
\[ 
\mu = \sum_{n=1}^\infty \frac{1}{|F_n| 2^n} \mu_n,  
\]
it is a positive finite measure on $\sB$ with the property $\mu(A) \leq |\phi_\Lambda|(A)$. 
Assume that $\mu(A) = 0$. Then from $\mu_n(A) = 0$, 
$|\lambda|(A) \leq 1/n$ for any $\lambda \in \Lambda$ and any $n \geq 1$, i.e., $|\lambda|(A) = 0$. 
Thanks to the Pettis theorem, this means the $\mu$-continuity of $\phi_\Lambda$. 
\end{proof}

\begin{Corollary}[Bartle-Dunford-Schwartz1955]\label{BDS} 
For a vector measure $\phi$ on a $\sigma$-algebra, we can find a finite positive measure $\mu$ 
so that $\phi$ is $\mu$-continuous and $\mu$ is majorized by $|\phi|$. 
\end{Corollary}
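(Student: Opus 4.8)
The plan is to reduce the statement to Doubrovsky's theorem by passing through the canonical isometric embedding $V \to \ell^\infty(V_1^*)$. First I would put $\Lambda = \{ v^*\phi; v^* \in V_1^*\}$, so that the composite $\widehat\phi$ of $\phi$ with the canonical embedding is identified with $\phi_\Lambda: \sB \to \ell^\infty(\Lambda)$ (after the obvious relabeling of the index set $\Lambda$ by $V_1^*$). Each $\lambda = v^*\phi$ is a complex measure on the $\sigma$-algebra $\sB$, hence of finite total variation by Proposition~\ref{complex}(iii).

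Next I would verify the two hypotheses needed to invoke Doubrovsky's theorem for this $\Lambda$. Boundedness of $\Lambda$: since $\phi$ is a measure it is bounded by Proposition~\ref{variations}(iv), i.e.\ $|\phi|(S) < \infty$; but by the definition of semi-variation $|\phi|(S) = \sup\{ |v^*\phi|(S); v^* \in V_1^*\} = \sup\{ |\lambda|(S); \lambda \in \Lambda\}$, so $\Lambda$ is a bounded set of complex measures. That $\phi_\Lambda$ is a measure: it equals $\widehat\phi$, the composition of the measure $\phi$ with the bounded (indeed isometric) linear map $V \to \ell^\infty(V_1^*)$, and composing a measure with a bounded operator yields a measure.

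With both hypotheses in place, Doubrovsky's theorem produces a finite positive measure $\mu \in L^1(\sB)$ for which $\phi_\Lambda$ is $\mu$-continuous and $\mu(A) \leq |\phi_\Lambda|(A)$ for all $A \in \sB$. It then remains only to transport this back to $\phi$. Because the embedding $V \to \ell^\infty(V_1^*)$ is isometric, $\| \phi(A)\| = \| \widehat\phi(A)\| = \| \phi_\Lambda(A)\|$ for every $A$, so $\mu$-continuity of $\phi_\Lambda$ is literally the same assertion as $\mu$-continuity of $\phi$; likewise the Corollary following Lemma~\ref{variation} gives $|\phi_\Lambda| = |\widehat\phi| = |\phi|$, whence $\mu(A) \leq |\phi|(A)$, i.e.\ $\mu$ is majorized by $|\phi|$. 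This finishes the proof.

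I expect no serious obstacle here: essentially all the work is already encapsulated in Doubrovsky's theorem, and the rest is a dictionary between $\phi$ and $\phi_\Lambda$. The only points that genuinely require the earlier theory are the preliminary checks in the second paragraph — that $\Lambda$ is bounded (resting on the nonobvious fact, Proposition~\ref{variations}(iv), that a vector measure has finite semi-variation) and that $\phi_\Lambda$ is again a measure — everything else being routine bookkeeping with the isometric embedding.
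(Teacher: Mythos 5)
Your proof is correct and follows exactly the route of the paper: apply Doubrovsky's theorem to $\Lambda = \{ v^*\phi;\ v^* \in V_1^*\}$, with boundedness supplied by Proposition~\ref{variations}~(iv). The paper leaves the remaining identifications ($\phi_\Lambda = \widehat\phi$ is a measure, $|\phi_\Lambda| = |\phi|$, and transport of $\mu$-continuity through the isometric embedding) implicit, and your write-up simply makes these routine checks explicit.
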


\begin{proof}
Apply the theorem to $\Lambda = \{ v^* \phi; v^* \in V^*_1\}$, 
which is bounded by Proposition~\ref{variations}~(iv). 
\end{proof} 

Recall that countable additivity of a semi-measure $\mu:\sB \to [0,\infty)$ is equivalent to 
the condition that, if $A_n \downarrow \emptyset$ in $\sB$, then $\mu(A_n) \downarrow 0$. 
Let $\sB^\sigma$ be the $\sigma$-algebra generated by $\sB$. 
The classical extension theorem\footnote{This can be attributed to many researchers: 
Fr\'echet, Carath\'eodory, Kolmogorov, Hahn and Hopf (Vladimir I.~Bogachev, Measure Theory I).
It seems fair to add the name of Daniell to the list because 
the theorem itself is just an example of Daniell integral.} says that, 
if a finite positive semi-measure on $\sB$ is countably additive, it is uniquely extended to 
a positive measure on $\sB^\sigma$. 

In the framework of Daniell integral (see \cite{Y2007} for example), this can be explained 
in the following fashion: Let $L$ be the vector lattice 
of real-valued simple functions on the base set $S$. Then a semi-measure $\mu$ can be interpreted 
as a positive linear functional $L \to \R$, which is also denoted by $\mu$. 
Let $f_n \downarrow 0$ in $L$. Then, given $\epsilon>0$, $[f_n \geq \epsilon] \downarrow \emptyset$ 
in $\sB$ and 
$\mu(f_n) \leq \| f_1\|_\infty \mu([f_n \geq \epsilon]) + \epsilon \mu(S)$, 
together with the continuity of $\mu$ imply $\lim_n \mu(f_n) \leq \epsilon \mu(S)$. 
Thus, $\mu$ is continuous as a linear functional and we can apply 
the whole construction of Daniell integral to get a measure extension to $\sB^\sigma$. 

\begin{Lemma}
Let $\mu$ be a finite positive measure on $\sB^\sigma$ and embed $\sB^\sigma$ into $L^1(S,\mu)$. 
Then $\sB^\sigma$ is closed in $L^1(S,\mu)$ and $\sB$ is dense in $\sB^\sigma$. 
\end{Lemma}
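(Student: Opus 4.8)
The plan is to treat the two assertions separately, each by a soft measure-theoretic argument; throughout I identify $A\in\sB^\sigma$ with the class of $1_A$ in $L^1(S,\mu)$, so that the distance between $A$ and $B$ is $\|1_A-1_B\|_1=\mu(A\triangle B)$.

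\emph{Closedness of $\sB^\sigma$.} Suppose $A_n\in\sB^\sigma$ and $1_{A_n}\to f$ in $L^1(S,\mu)$. Since $L^1$-convergence forces $\mu$-a.e.\ convergence along a subsequence, choose $n_1<n_2<\cdots$ with $1_{A_{n_k}}\to f$ off a $\mu$-null set $N$. For $s\notin N$ the sequence $(1_{A_{n_k}}(s))_k$ takes values in $\{0,1\}$ and converges, hence is eventually constant, equal to $f(s)\in\{0,1\}$. Put $A=\limsup_k A_{n_k}=\bigcap_m\bigcup_{k\ge m}A_{n_k}\in\sB^\sigma$. For $s\notin N$, the condition $s\in A$ (that $s$ lies in infinitely many of the $A_{n_k}$) is equivalent to $f(s)=1$, so $1_A=f$ $\mu$-a.e.; thus $f$ is the image of a set in $\sB^\sigma$, and the image of $\sB^\sigma$ in $L^1(S,\mu)$ is closed.

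\emph{Density of $\sB$.} Use a good-sets argument. Let $\sC$ be the set of $A\in\sB^\sigma$ that can be $L^1$-approximated by indicators of members of $\sB$, i.e.\ such that for every $\epsilon>0$ there is $B\in\sB$ with $\mu(A\triangle B)<\epsilon$. Trivially $\sB\subseteq\sC$. The set $\sC$ is a Boolean subalgebra of $\sB^\sigma$: it is stable under complement because $A^c\triangle B^c=A\triangle B$ and $B^c\in\sB$, and under finite unions because $(A_1\cup A_2)\triangle(B_1\cup B_2)\subseteq(A_1\triangle B_1)\cup(A_2\triangle B_2)$ and $B_1\cup B_2\in\sB$. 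Moreover $\sC$ is a monotone class: if $A_n\uparrow A$ with $A_n\in\sC$, then continuity of the finite measure $\mu$ from below gives $\mu(A\setminus A_n)\to0$, so picking $n$ with $\mu(A\setminus A_n)<\epsilon/2$ and $B\in\sB$ with $\mu(A_n\triangle B)<\epsilon/2$ yields $\mu(A\triangle B)<\epsilon$; the case $A_n\downarrow A$ is identical using continuity from above (or complements). A monotone class containing the algebra $\sB$ contains $\sigma(\sB)=\sB^\sigma$, so $\sC=\sB^\sigma$; that is, $\{1_B:B\in\sB\}$ is dense in $\{1_A:A\in\sB^\sigma\}$.

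I do not expect a genuine obstacle here. The only two points needing care are: identifying the $\mu$-a.e.\ limit of a sequence of indicators with $1_{\limsup_k A_{n_k}}$, which works precisely because a convergent $\{0,1\}$-valued sequence is eventually constant; and organising the density proof so that $\sC$ is already a Boolean algebra, so that the plain monotone class theorem applies with no $\pi$–$\lambda$ bookkeeping. As an alternative to the monotone class step one could invoke the Daniell construction used just above, where $\sB$-simple functions are $L^1$-dense in $L^1(S,\mu)$: approximating $1_A$ by a $\sB$-simple $f=\sum_j c_j 1_{B_j}$ and thresholding, i.e.\ taking $B=\bigcup_{c_j\ge 1/2}B_j\in\sB$, gives $\mu(A\triangle B)\le 2\|1_A-f\|_1$.
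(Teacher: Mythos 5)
Your proof is correct. The closedness half is the same argument as the paper's: $L^1$-convergence gives a.e.\ convergence along a subsequence, and a convergent $\{0,1\}$-valued sequence is eventually constant, so the limit is (a.e.) the indicator of a set in $\sB^\sigma$ — your explicit identification of that set as $\limsup_k A_{n_k}$ just makes precise what the paper leaves implicit. The density half is organized differently: the paper observes that the $L^1$-closure of $\{1_B;B\in\sB\}$ consists of indicators (by the first part) and is itself a $\sigma$-algebra, using $1_{A\cap B}=1_A1_B$ together with dominated convergence to get stability under intersections and monotone limits, and then invokes minimality of $\sB^\sigma$; you instead run a good-sets argument on the family $\sC$ of approximable sets, check it is a Boolean algebra via symmetric-difference inclusions and a monotone class via continuity of the finite measure $\mu$, and conclude with the monotone class theorem. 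The two routes are close in spirit, but yours trades the paper's slick algebraic closure argument for a standard monotone-class verification that needs no appeal to the closedness statement just proved, while the paper's version gets countable operations almost for free from dominated convergence; your final remark (approximating $1_A$ by a $\sB$-simple function from the Daniell construction and thresholding, with $\mu(A\triangle B)\le 2\|1_A-f\|_1$) is also a valid shortcut, provided you take the simple function real-valued so the threshold $c_j\ge 1/2$ makes sense.
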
 

\begin{proof} 
Since any sequential convergence in mean implies almost all convergence by passing to a subsequence, 
$\sB^\sigma$ (more precisely $\{ 1_B; B \in \sB^\sigma\}$) is closed in $L^1(S,\mu)$ 
(pointwise convergence of $\{0,1\}$-valued functions produce $\{ 0,1\}$-valued functions). 
In view of $1_{A \cap B} = 1_A 1_B$ and the dominated convergence theorem, on sees that 
the closure of $\sB$ (more precisely $\{ 1_B; B \in sB\}$) in $L^1(S,\mu)$ provides 
a $\sigma$-algebra and hence coincides with $\sB^\sigma$. 
\end{proof} 

\begin{Theorem}[Kluv\'anek1961]\label{K1961}
Let $\phi: \sB \to V$ be a semi-measure on a Boolean algebra $\sB$.  
If $\phi$ is $\mu$-continuous for some countably additive positive semi-measure $\mu$ on $\sB$, 
then $\phi$ is uniquely extended to a measure $\sB^\sigma \to V$ 
on the $\sigma$-algebra $\sB^\sigma$ generated by $\sB$. 
\end{Theorem}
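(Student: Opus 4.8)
The plan is to push the given $\mu$-continuous semi-measure $\phi:\sB\to V$ through the completion $L^1(S,\mu)$ and exploit the preceding Lemma, which says $\sB$ is dense in $\sB^\sigma$ inside $L^1(S,\mu)$ and $\sB^\sigma$ is closed there. First I would extend $\mu$ itself: since $\mu$ is a finite positive countably additive semi-measure on $\sB$, the classical extension theorem quoted just above gives a unique positive measure, still denoted $\mu$, on $\sB^\sigma$; this is what lets me form $L^1(S,\mu)$ and embed both $\sB$ and $\sB^\sigma$ into it as indicator functions. The $\mu$-continuity hypothesis, $\forall\epsilon\,\exists\delta\,\forall A\in\sB\,(\mu(A)\le\delta\Rightarrow\|\phi(A)\|\le\epsilon)$, says precisely that the map $\{1_A:A\in\sB\}\to V$, $1_A\mapsto\phi(A)$, is uniformly continuous for the $L^1(S,\mu)$-metric (note $\mu(A\triangle B)=\|1_A-1_B\|_{L^1}$, and additivity of $\phi$ together with $\mu$-continuity controls $\|\phi(A)-\phi(B)\|=\|\phi(A\setminus B)-\phi(B\setminus A)\|$ by $\|\phi(A\setminus B)\|+\|\phi(B\setminus A)\|$).

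Next, since $V$ is a Banach space (complete) and $\sB$ is dense in the closed set $\sB^\sigma\subset L^1(S,\mu)$, this uniformly continuous map extends uniquely to a continuous map $\widetilde\phi:\{1_B:B\in\sB^\sigma\}\to V$; define $\phi^\sigma(B)=\widetilde\phi(1_B)$ for $B\in\sB^\sigma$. I would then check that $\phi^\sigma$ is finitely additive: if $B_1,B_2\in\sB^\sigma$ are disjoint, approximate each by $A_i\in\sB$ with $\mu(B_i\triangle A_i)$ small; $A_1\cap A_2$ then has small $\mu$-measure and $A_1\cup A_2$ approximates $B_1\sqcup B_2$, so additivity of $\phi$ on $\sB$ passes to the limit using the uniform continuity estimate. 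Countable additivity is then equivalent (as recalled in the text) to $\phi^\sigma(B_n)\to 0$ whenever $B_n\downarrow\emptyset$ in $\sB^\sigma$; but $B_n\downarrow\emptyset$ forces $\mu(B_n)\downarrow 0$ by countable additivity of the extended $\mu$, and then $\|\phi^\sigma(B_n)\|\to 0$ by the (extended) $\mu$-continuity of $\phi^\sigma$ — which itself follows from the $\epsilon$–$\delta$ estimate surviving the limit. Uniqueness is immediate: any measure on $\sB^\sigma$ restricting to $\phi$ on $\sB$ must, being countably additive hence (by the Bartle–Dunford–Schwartz corollary, or directly) controlled by some positive measure, agree with $\phi^\sigma$ on the $L^1(S,\mu)$-dense set $\sB$ and hence everywhere by continuity.

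The main obstacle I expect is the transfer of $\mu$-continuity and additivity of $\phi$ from $\sB$ to $\sB^\sigma$ cleanly, i.e., verifying that the $\epsilon$–$\delta$ condition is genuinely preserved under $L^1$-limits and that the extension is well-defined independently of the approximating sequence; the subtlety is that $\phi$ is only defined on indicators, not linearly on simple functions a priori, so one must argue at the level of the metric space $(\{1_A\},d_\mu)$ rather than via linear-functional machinery. Once the extension map is shown well-defined and uniformly continuous, additivity and countable additivity are routine limiting arguments, and uniqueness is a density argument. A secondary point worth stating carefully is that $\phi^\sigma$ inherits boundedness (it is a measure, hence bounded by Proposition~\ref{variations}), so no circularity arises in invoking control measures for the uniqueness step.
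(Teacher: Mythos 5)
Your existence argument is essentially the paper's own proof: extend $\mu$ by the classical extension theorem, metrize $\sB^\sigma$ by $d(A,B)=\mu(A\triangle B)$ so that (by the preceding Lemma) $\sB$ is $d$-dense in the complete space $\sB^\sigma$, observe that $\mu$-continuity together with $\|\phi(A)-\phi(B)\|\le\|\phi(A\setminus B)\|+\|\phi(B\setminus A)\|$ makes $\phi$ uniformly $d$-continuous, extend by completeness of $V$, and recover finite and countable additivity from $d$-continuity (your route via $\mu(B_n)\downarrow 0$ for $B_n\downarrow\emptyset$ is exactly the paper's "monotone convergence" remark). Your worry about $\phi$ being defined only on indicators is harmless: everything happens in the metric space $(\{1_A\},d)$, as you say, and as the paper does.

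The one genuine gap is the uniqueness step. You argue that a competing extension $\psi$ on $\sB^\sigma$ has, by Bartle--Dunford--Schwartz, some control measure $\nu$, and then conclude that $\psi$ agrees with $\phi^\sigma$ "on the $L^1(S,\mu)$-dense set $\sB$ and hence everywhere by continuity." But the continuity you get from $\nu$ is continuity for $d_\nu(A,B)=\nu(A\triangle B)$, and $\nu$ need have no relation to $\mu$; two maps agreeing on a $d_\mu$-dense set need not agree unless both are $d_\mu$-continuous, which is not established for $\psi$. The argument can be repaired — pass to the finite positive measure $\mu+\nu$, note that both $\phi^\sigma$ and $\psi$ are uniformly $d_{\mu+\nu}$-continuous (Pettis plus the same set-difference inequality) and that $\sB$ is $d_{\mu+\nu}$-dense in $\sB^\sigma$ by the Lemma applied to $\mu+\nu$ — but this extra step is needed. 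The paper avoids the issue entirely: for two extensions, the family of sets on which they coincide contains $\sB$ and is closed under complements and countable disjoint unions (a Dynkin-class argument), hence contains all of $\sB^\sigma$; no control measure and no appeal to BDS (which would anyway be a heavier tool than the statement requires) is involved.
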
 

\begin{proof} 
The uniqueness is as usual: Given two extensions, sets of their coincidency form a $\sigma$-algebra 
containing $\sB$, whence extensions coincide on the whole $\sB^\sigma$. 

For the existence, first note that $\mu$ is extended to a measure by the classical extension theorem, 
which is again denoted by $\mu$. 
From the previous lemma, a complete (pseudo)metric on $\sB^\sigma$ is defined by 
$d(A,B) = \| 1_A - 1_B\|_1 = \mu(A \triangle B) = \mu(A\setminus B) + \mu(B \setminus A)$ so that 
$(\sB^\sigma,d)$ is the completion of $(\sB,d)$. 
In view of the inequality 
\[ 
\| \phi(A) - \phi(B) \| = \| \phi(A \setminus B) - \phi(B\setminus A)\| 
\leq \| \phi(A \setminus B)\| + \|\phi(B\setminus A)\|, 
\] 
$\phi$ is uniformly continuous with respect to $d$, which admits therefore a continuous extension 
$\overline{\phi}$ to $(\sB^\sigma,d)$. 

Now $\overline{\phi}$ is countably additive thanks to the $d$-continuity: 
finite additivity of $\phi$ goes over to that of $\overline{\phi}$ and monotone convergence assures 
the $\sigma$-additivity. 
\end{proof}

\section{Cross Norms} 
This is a very old but still developing subject and there are lots of references to be mentioned. %still  
We nominate, however, just [Raymond 1973] and [Diesel 1985] here to follow them. 

Given Banach spaces $X$ and $Y$, let $\sB(X,Y)$ be the Banach space of bounded bilinear forms on 
$X\times Y$ and $\sL(X,Y)$ be the Banach space of bounded linear maps of $X$ into $Y$. 
There are natural identifications $\sL(X,Y^*) = \sB(X,Y) = \sL(Y,X^*)$. 
Recall that a (semi)norm on the algebraic tensor product $X\otimes Y$ is 
called a cross (semi)norm if it satisfies $\| x\otimes y\| = \| x\|\, \|y\|$. 

The obvious bilinear map $X\times Y \to \sB(X^*,Y^*)$ gives rise to a linear map 
$X\otimes Y \to \sB(X^*,Y^*)$ by $(x\otimes y)(x^*,y^*) = x^*(x) y^*(y)$, which is injective. 
In fact, let $z = \sum_l x_l\otimes y_l \in X\otimes Y$ and express 
$z = \sum_{1 \leq j \leq m, 1 \leq k \leq n} z_{j,k}e_j\otimes f_k$ with 
$\{ e_j\} \subset X$ and $\{ f_k\} \subset Y$ linearly independent. 
The dual bases $\{ e_j^*\}$ and $\{ f_k^*\}$ are continuous on finite-dimensional 
subspaces and can be extended to bounded linear functionals. 
We then have $z_{j,k} = z(e_j^*,f_k^*)$. 
Thus the norm on $\sB(X^*,Y^*)$ induces a cross norm on $X\otimes Y$, which is denoted by 
$\| \cdot\|_{\sB(X^*,Y^*)}$. In the embedding $X\otimes Y \to \sB(X^*,Y^*) = \sL(X^*,Y^{**})$, 
$\sum_l x_l\otimes y_l \in X\otimes Y$ is realized by the operator 
$x^* \mapsto \sum_l x^*(x_l) y_l$ and the image of $X\otimes Y$ is included in
the subspace $\sL(X^*,Y) \subset \sL(X^*,Y^{**})$. Thus we have 
\[ 
\| \sum_l x_l\otimes y_l \|_{\sB(X^*,Y^*)}  
= \sup\{ \| \sum_l x^*(x_l) y_l \|; x^* \in X^*_1 \}
= \sup\{ |\sum_l x^*(x_l) y^*(y_l)|; x^* \in X^*_1, y^* \in Y^*_1 \}
\]
and a similar expression holds with the role of $X$ and $Y$ exchanged. 

There is another natural way to get a cross norm on $X\otimes Y$. 
Each $f \in \sB(X,Y)$ defines a linear functional on $X\otimes Y$ by 
$f(z) = \sum_{l=1}^n f(x_l,y_l)$ 
satisfying $|f(z)| \leq \sum_{l=1}^n \| f\| \| x_l\| \| y_l\|$, 
whence it induces a linear map $X\otimes Y \to \sB(X,Y)^*$ and the associated seminorm 
$\|\cdot\|_{\sB(X,Y)^*}$ satisfies 
\[ 
\| z\|_{\sB(X,Y)^*} = \inf \{ \sum_{l=1}^n \| x_l\| \| y_l\|; z = \sum_{l=1}^n x_l\otimes y_l\}. 
\]
The inequality $\leq$ is clear. To get the reverse inequality, we first notice that the right hand side 
defines a seminorm $\| \cdot\|_\text{inf}$ on $X\otimes Y$. Let $\varphi: X \otimes Y \to \C$ be 
a $\|\cdot\|_\text{inf}$-bounded linear functional with its dual norm denoted by $\| \varphi\|$. 
Then the associated bilinear functional $f(x,y) = \varphi(x\otimes y)$ satisfies $\| f\| \leq \|\varphi\|$, 
whence 
\[ 
\|z\| = \sup\{ |\varphi(z)|; \|\varphi\| \leq 1\} 
\leq \sup\{ |f(z)|; f \in \sB(X,Y), \| f\| \leq 1\} = \| z\|_{\sB(X,Y)^*}.
\]

The bilinear map $\Phi: X^*\times Y^* \to \sB(X,Y)$ defined by 
\[ 
\Phi(x^*,y^*): (x,y) \mapsto x^*(x) y^*(y) 
\]
satisfies $\| \Phi(x^*,y^*)\| \leq \| x^*\|\,\|y^*\|$ and it induces a contractive map 
${}^t\Phi: \sB(X,Y)^* \to \sB(X^*,Y^*)$. 
Since the composition of $X\otimes Y \to \sB(X,Y)^*$ with $\sB(X,Y)^* \to \sB(X^*,Y^*)$ 
coincides with the first embedding $X\otimes Y \to \sB(X^*,Y^*)$, we have
$\| z\|_{\sB(X^*,Y^*)} \leq \| z\|_{\sB(X,Y)^*}$. 

Let $X\underline{\otimes} Y$ (resp.~$X\overline{\otimes}Y$) 
be the closure of $X\otimes Y$ in $\sB(X^*,Y^*)$ (resp.~in $\sB(X,Y)^*$). 
Then we have a natural contractive map $X\overline{\otimes} Y \to X\underline{\otimes} Y$. 

These cross norms have the following characterization: 
$\| \cdot\|_{\sB(X,Y)^*}$ is the maximal cross norm, while 
$\| \cdot\|_{\sB(X^*,Y^*)}$ is minimal among cross norms satisfying 
$\| x^*\otimes y^*\| = \| x^*\|\,\|y^*\|$ for $x^*\in X^*$ and $y^* \in Y^*$. 

\begin{Example}
Let $X$ be a Hilbert space. $X\otimes X^* \to \sB(X^*,X^{**}) = \sB(X^*,X) = \sL(X,X)$ is 
an embedding as finite rank operators on $X$ and $X\underline{\otimes} X^*$ corresponds 
to the compact operator algebra $\sC(X)$, 
whereas the norm of $\sB(X,X^*)^* = \sL(X,X^{**})^* = \sL(X,X)^*$ on $X\otimes X^*$ 
is realized by the trace norm on finite rank operators and $X\overline{\otimes}Y$ is identified with 
the trace ideal $\sC_1(X)$ of $\sC(X)$. For $z \in X\otimes X^* \subset \sC_1(X)$, 
\[ 
\| \sum_n z(\delta_n)\otimes \delta_n^* \|_{\sB(X,X^*)^*} 
= \sup \{ |\langle z,\varphi\rangle|; \varphi \in \sL(X), \| \varphi\| \leq 1\} 
= \sup \{ |\langle z,\varphi\rangle|; \varphi \in \sC_1(X)^*, \| \varphi\| \leq 1\} 
= \| z\|_1.  
\] 
\end{Example}

In connection with tensor product measures, 
we introduce two more cross norms $\| \cdot\|_r$ and $\| \cdot\|_l$ according to H.~Jacobs: 
\begin{align*}
\| z\|_r &= \inf\Bigl\{ \sup\{ \bigl\| \sum_i \alpha_i \| x_i\| y_i\bigr\|; |\alpha_i| \leq 1\}; 
z = \sum_{i=1}^n x_i\otimes y_i\Bigr\},\\ 
\| z\|_l &= \inf\Bigl\{ \sup\{ \bigl\| \sum_i \alpha_i \| y_i\| x_i\bigr\|; |\alpha_i| \leq 1\}; 
z = \sum_{i=1}^n x_i\otimes y_i\Bigr\}. 
\end{align*}
It is immediate to show that these are seminorms. 
Clearly these are majorized by the largest cross norm and 
\begin{align*}
\sup\{ \bigl\| \sum_i \alpha_i \| x_i\| y_i\bigr\|; |\alpha_i| \leq 1\}
&= \sup\{ \Bigl| \sum_i \alpha_i \| x_i\| y^*(y_i) \Bigr|; |\alpha_i| \leq 1, \| y^*\| \leq 1 \}\\
&= \sup\{ \sum_i \| x_i\| |y^*(y_i)|; \| y^*\| \leq 1 \}\\
&\geq \sup\{ \| \sum_i y^*(y_i) x_i\|; \| y^*\| \leq 1 \}\\
&= \sup\{ | \sum_i y^*(y_i) x^*(x_i)|; \| x^*\| \leq 1, \| y^*\| \leq 1 \}\\
&= \| \sum_i x_i\otimes y_i\|_{\sB(X^*\times Y^*)}. 
\end{align*}
shows that these majorize the lower cross norm. 
Consequently $\|\cdot\|_l$ and $\|\cdot\|_r$ are in fact cross norms. 
In general, these two norms are different and 
their arithmetic mean gives another cross norm, which is denoted by $\| \cdot \|_m$. 

\begin{Theorem}[Kluv\'anek1973]
Let $\varphi: \sA \to V$ and $\psi: \sB \to W$ be measures and 
$V\otimes_mW$ be the completion of $V\otimes W$ with respect to the cross norm $\| \cdot\|_m$. 
Then there exists a measure $\phi: \sA\otimes_\sigma \sB \to V\otimes_mW$ satisfying 
$\phi(A\times B) = \varphi(A)\otimes \psi(B)$ for $A \in \sA$ and $B \in \sB$. 
\end{Theorem}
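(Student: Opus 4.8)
=== PROOF PROPOSAL ===

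\textbf{Strategy.}
The plan is to build $\phi$ on the Boolean algebra $\sA\otimes\sB$ generated by rectangles, show it is $\mu$-continuous for a suitable countably additive positive control measure $\mu$, and then invoke Kluv\'anek's extension theorem (Theorem~\ref{K1961}) to obtain the unique measure on $\sA\otimes_\sigma\sB = (\sA\otimes\sB)^\sigma$. The formula $\phi(A\times B) = \varphi(A)\otimes\psi(B)$ forces the value of $\phi$ on every element of $\sA\otimes\sB$, since every such element is a finite disjoint union of rectangles; the first task is to check this is well-defined and finitely additive, which is the standard ``product semi-measure'' computation (refine two rectangle decompositions to a common grid). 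Here one uses that $V\otimes_m W$ carries an honest cross norm, so $\varphi(A)\otimes\psi(B)$ makes sense with $\|\varphi(A)\otimes\psi(B)\| = \|\varphi(A)\|\,\|\psi(B)\|$.

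\textbf{The control measure.}
By Corollary~\ref{BDS} (Bartle--Dunford--Schwartz), choose finite positive measures $\alpha$ on $\sA$ and $\beta$ on $\sB$ with $\varphi$ being $\alpha$-continuous, $\psi$ being $\beta$-continuous, $\alpha \leq |\varphi|$, $\beta \leq |\psi|$. Let $\mu_0 = \alpha\times\beta$ be the product measure on $\sA\otimes\sB$ (finitely additive, countably additive in the sense of the classical product construction, hence extends to $\sA\otimes_\sigma\sB$). The crucial estimate to prove is: for a finite disjoint union $E = \bigsqcup_i A_i\times B_i \in \sA\otimes\sB$,
\[
\|\phi(E)\| \leq \text{(something small when } \mu_0(E) \text{ is small)}.
\]
The natural route is via the $m$-norm itself. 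Writing $\phi(E) = \sum_i \varphi(A_i)\otimes\psi(B_i)$, the definition of $\|\cdot\|_m = \tfrac12(\|\cdot\|_l + \|\cdot\|_r)$ gives
\[
\|\phi(E)\|_m \leq \tfrac12\sup_{|\gamma_i|\leq1}\Bigl\|\sum_i \gamma_i\|\varphi(A_i)\|\,\psi(B_i)\Bigr\|
+ \tfrac12\sup_{|\gamma_i|\leq1}\Bigl\|\sum_i \gamma_i\|\psi(B_i)\|\,\varphi(A_i)\Bigr\|.
\]
For the first term, fix a column index and apply the $\pi$-inequality / Lemma~\ref{variation} to control $\|\sum \gamma_i \|\varphi(A_i)\|\psi(B_i)\|$ in terms of $|\psi|$ evaluated on fibers, weighted by $\|\varphi(A_i)\|$; then use $\|\varphi(A_i)\| \leq |\varphi|(A_i)$ and $\alpha$-/$\beta$-continuity together with the fact that for a ``grid'' decomposition $\sum_i \alpha(A_i)\beta(B_i) = \mu_0(E)$ is small forces the relevant fiberwise sums to be small. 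One organizes the sum over the grid so that $\sum_i \|\varphi(A_i)\| \cdot (\text{fiber }\psi\text{-variation})$ is bounded using Fubini for the positive measures $|\varphi|\times|\psi|$ and absolute continuity.

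\textbf{Finishing and the main obstacle.}
Once $\mu_0$-continuity of $\phi$ on $\sA\otimes\sB$ is established, Theorem~\ref{K1961} applies verbatim: $\mu_0$ extends to a countably additive positive measure on $\sA\otimes_\sigma\sB$, $\phi$ is uniformly $d_{\mu_0}$-continuous on the Boolean algebra, hence extends continuously, and the extension inherits finite additivity plus monotone continuity, i.e., is a measure. Uniqueness is the usual $\sigma$-algebra-of-coincidence argument, and it pins down $\phi$ from its values on rectangles. I expect the main obstacle to be the continuity estimate in the middle step: making precise the claim that $\|\phi(E)\|_m \to 0$ as $\mu_0(E)\to 0$, uniformly over disjoint rectangle decompositions of $E$. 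The subtlety is that $E$ admits many decompositions, and one must pass to a common refinement (a finite grid $\{A_p\times B_q\}$) and then bound $\|\sum_{p,q}\varphi(A_p)\otimes\psi(B_q)\cdot 1_{(p,q)\in E}\|_m$; the $l$- and $r$-norms are designed precisely so that one of the two tensor factors can be pulled out as a scalar weight $\|\cdot\|$, reducing a genuinely bilinear estimate to two ``one-variable'' vector-measure continuity estimates of Bartle--Dunford--Schwartz type. Handling the interaction of the $\sup$ over $|\gamma_i|\leq 1$ with the double sum — rather than a naive triangle inequality that would lose the absolute continuity — is where the real work lies, and it is exactly the reason the cross norm $\|\cdot\|_m$ (not the injective or projective norm) appears in the statement.
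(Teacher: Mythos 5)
Your overall architecture coincides with the paper's: extend the rectangle map to a semi-measure on the Boolean algebra $\sA\otimes\sB$, take Bartle--Dunford--Schwartz control measures $\mu$ for $\varphi$ and $\nu$ for $\psi$ (Corollary~\ref{BDS}), prove $(\mu\times\nu)$-continuity, and conclude with Theorem~\ref{K1961}; the well-definedness on rectangles and the uniqueness argument are standard and fine. The gap is that the one step carrying all the content --- the estimate showing $\|\phi(C)\|_m$ is small when $(\mu\times\nu)(C)$ is small, uniformly over disjoint rectangle decompositions --- is not proved but only flagged as ``the main obstacle,'' and the route you sketch for it would not work as stated. You propose to bound the $l$-norm by organizing a sum of the form $\sum_i \|\varphi(A_i)\|\cdot(\text{fiber }\psi\text{-variation})$ and invoking ``Fubini for $|\varphi|\times|\psi|$.'' But in this paper's notation $|\varphi|$ is the semivariation, which is not additive, so there is no product measure to which Fubini applies; and if you instead mean the variation $|\!|\!|\varphi|\!|\!|$, that can be infinite for a Banach-space-valued measure, so any bound that sums $\|\varphi(A_i)\|$ over the grid is uncontrolled --- despite your caveat, this is exactly the naive triangle inequality in the $\varphi$-variable. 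The whole point of Lemma~\ref{variation} is that the unimodular coefficients must stay inside the norm: $\bigl\|\sum_i \alpha_i\varphi(A_i)\bigr\| \leq |\varphi|(\bigsqcup_i A_i)$.

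What the paper actually does at this point, and what is missing from your proposal, is a splitting of the index set by the size of one factor. Write $C=\bigsqcup_i A_i\times B_i$ with the $A_i$ disjoint and $(\mu\times\nu)(C)\leq\delta^2$, set $\Delta=\{i:\nu(B_i)\geq\delta\}$, and absorb $\|\psi(B_i)\|$ into coefficients $\beta_i=\alpha_i\|\psi(B_i)\|$. For $i\notin\Delta$, the $\nu$-continuity of $\psi$ gives $|\beta_i|\leq\epsilon$, so Lemma~\ref{variation} bounds that part of the sum by $\epsilon\,|\varphi|(A_{I\setminus\Delta})\leq\epsilon\|\varphi\|$; for $i\in\Delta$, the Chebyshev-type inequality $\delta\sum_{i\in\Delta}\mu(A_i)\leq(\mu\times\nu)(C)\leq\delta^2$ forces $\mu(A_\Delta)\leq\delta$, and there $|\beta_i|\leq\|\psi\|$ while the $\mu$-continuity of $\varphi$ makes the $\varphi$-contribution over $A_\Delta$ small. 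Exchanging the roles of the factors handles $\|\cdot\|_r$, hence $\|\cdot\|_m$. Without this (or an equivalent) device for handling the bilinear interaction, your middle step remains an unproved claim, so the proposal as written does not constitute a proof.
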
 

\begin{proof}
Recall that $\varphi$ and $\psi$ are bounded (Proposition~\ref{variations}) and satisfy
\[ 
\| \sum_i \alpha_i \varphi(A_i)\| \leq r |\varphi|(\bigsqcup A_i), 
\quad 
\| \sum_j \beta_j \psi(B_j)\| \leq r |\psi|(\bigsqcup_j B_j) 
\] 
for $\bigsqcup A_i$ in $\sA$ and $\bigsqcup B_j$ in $\sB$  
with $|\alpha_i| \leq r$ and $|\beta_j| \leq r$ (Lemma~\ref{variation}). 

Let $\mu$ and $\nu$ be control measures of $\varphi$ and $\psi$ respectively 
(their existence guaranteed by Corollary~\ref{BDS}). 
Since the map $\sA\times \sB \ni A\times B \mapsto \varphi(A)\otimes_m \psi(B)$ is always 
uniquely extended to a semi-measure $\phi: \sA\otimes \sB \to V\otimes_mW$ on the Boolean algebra 
$\sA\otimes \sB$ generated by $\sA\times \sB$, it suffices to show the $(\mu\times \nu)$-continuity of 
$\phi$ (Theorem~\ref{K1961}). 

So, given $\epsilon > 0$, choose $\delta > 0$ such that 
$\mu(A) \leq \delta$ and $\nu(B) \leq \delta$ imply 
$\| \varphi(A)\| \leq \epsilon$ and $\| \psi(B)\| \leq \epsilon$ for 
$A \in \sA$ and $B \in \sB$. 
Let $C \in \sA\otimes \sB$ satisfy $(\mu\times \nu)(C) \leq \delta^2$. 
If we write $C = \bigsqcup_{i \in I} A_i\times B_i$ with $\bigsqcup A_i$ and set 
$\Delta = \{ i \in I; \nu(B_i) \geq \delta\}$, then the inequalities
$\sum_{k \in \Delta} \delta \mu(A_k)\leq \sum_{k \in \Delta} \mu(A_k)\nu(B_k) \leq (\mu\times\nu)(C) \leq \delta^2$ 
imply $\mu(A_{\Delta}) \leq \delta$ and therefore 
$\| \varphi(A_{\Delta})\| \leq \epsilon$ ($A_J = \sqcup_{j \in J} A_j$ for a subset $J \subset I$). 

Now, in the obvious inequality 
\[ 
\| \sum_k \varphi(A_k)\otimes \psi(B_k)\|_l \leq 
\sup \left\| \sum_k \alpha_k \| \psi(B_k)\| \varphi(A_k) \right\|, 
\] 
if we put $\beta_k = \alpha_k \| \psi(B_k)\|$, 
then $|\beta_k | \leq \epsilon$ for $k \not\in \Delta$ and $|\beta_k| \leq \| \psi\|$ for $k \in \Delta$, 
which are used to get 
\begin{align*} 
% \| \sum_k \varphi(A_k)\otimes \psi(B_k)\|_l &\leq 
% \sup \left\| \sum_k \alpha_k \| \psi(B_k)\| \varphi(A_k) \right\|\\ 
\| \phi(C)\|_l &\leq \sup \left\| \sum_{k \in I \setminus \Delta} \alpha_k \| \psi(B_k)\| \varphi(A_k) \right\| 
+ \sup \left\| \sum_{k \in \Delta} \alpha_k \| \psi(B_k)\| \varphi(A_k) \right\|\\ 
&\leq \sup \left\| \sum_{k \in I \setminus \Delta} \beta_k  \varphi(A_k) \right\| 
+ \sup \left\| \sum_{k \in \Delta} \beta_k \varphi(A_k) \right\|\\ 
% &\leq \sup \epsilon \left\| \sum_{k \not\in J} \alpha_k  \varphi(A_k) \right\| 
% + \sup \| \psi\| \left\| \sum_{k \in J} \alpha_k \varphi(A_k) \right\|\\ 
&\leq \epsilon |\varphi|(A_{I \setminus \Delta}) 
+ \| \psi\|\, |\varphi|( A_\Delta)
\leq \epsilon (\| \varphi\| + \| \psi\|). 
\end{align*} 
By symmetry, we have $\| \phi(C)\|_r \leq \epsilon(\|\varphi\| + \| \psi\|)$ as well and finally get 
$\| \phi(C)\|_m \leq \epsilon(\|\varphi\| + \| \psi\|)$. 
\end{proof} 

\begin{Corollary}[Duchon-Kluv\'anek1967]
Tensor product measures are defined with respect to the least cross norm. 
\end{Corollary}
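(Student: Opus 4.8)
The plan is to obtain this as a formal consequence of Kluv\'anek's theorem above, the point being that the mean cross norm $\|\cdot\|_m$ dominates the least cross norm. First I would recall that the least (reasonable) cross norm on $V\otimes W$ is $\|\cdot\|_{\sB(V^*,W^*)}$, with completion $V\underline{\otimes}W$, and that the displayed computation introducing $\|\cdot\|_l$ and $\|\cdot\|_r$ already shows $\|\cdot\|_l,\|\cdot\|_r\ge\|\cdot\|_{\sB(V^*,W^*)}$; taking the arithmetic mean gives $\|\cdot\|_m\ge\|\cdot\|_{\sB(V^*,W^*)}$ as well.

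Consequently the identity map of the algebraic tensor product is contractive from $(V\otimes W,\|\cdot\|_m)$ to $(V\otimes W,\|\cdot\|_{\sB(V^*,W^*)})$, so it extends by uniform continuity to a contractive linear map $\iota\colon V\otimes_m W\to V\underline{\otimes}W$ which is the identity on elementary tensors $v\otimes w$. Next I would apply Kluv\'anek's theorem to measures $\varphi\colon\sA\to V$ and $\psi\colon\sB\to W$ to get a measure $\phi\colon\sA\otimes_\sigma\sB\to V\otimes_m W$ with $\phi(A\times B)=\varphi(A)\otimes\psi(B)$. Since the composite of a measure with a bounded linear operator is again a measure, $\iota\circ\phi\colon\sA\otimes_\sigma\sB\to V\underline{\otimes}W$ is a measure, and it satisfies $(\iota\circ\phi)(A\times B)=\varphi(A)\otimes\psi(B)$ because $\iota$ fixes elementary tensors. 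This exhibits the tensor product measure valued in the completion for the least cross norm.

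There is essentially no obstacle here: all the work has been done in advance—the inequalities $\|\cdot\|_l,\|\cdot\|_r\ge\|\cdot\|_{\sB(V^*,W^*)}$ and hence $\|\cdot\|_m\ge\|\cdot\|_{\sB(V^*,W^*)}$, the stability of countable additivity under bounded linear maps, and the minimality of $\|\cdot\|_{\sB(V^*,W^*)}$ among reasonable cross norms, the last of which shows that the target norm cannot be lowered any further within this setting. The only point worth double-checking is that $V\underline{\otimes}W$ really is the completion of $V\otimes W$ for $\|\cdot\|_{\sB(V^*,W^*)}$, which is immediate from its definition as the closure of $V\otimes W$ in $\sB(V^*,W^*)$.
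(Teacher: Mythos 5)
Your argument is correct and is precisely the route the paper intends: the paper leaves the corollary as an immediate consequence of Kluv\'anek's theorem, using the already-established inequalities $\|\cdot\|_l,\|\cdot\|_r\geq\|\cdot\|_{\sB(V^*,W^*)}$ (hence $\|\cdot\|_m\geq\|\cdot\|_{\sB(V^*,W^*)}$) and the fact that composing a measure with a bounded linear map yields a measure. Nothing is missing.
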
 

To get further information on tensor product measures, we look into cross norms of 
$\ell^p$-sequences in a Banach space $X$. 
Let $\ell_0 \subset \ell^p$ ($1 \leq p < \infty$) 
be a dense subspace consisting of all finite sequences.  
Then $\ell^p\underline{\otimes} X$ contains the algebraic tensor product $\ell_0 \otimes X$ 
as a dense subspace and, for $\sum_n \delta_n\otimes x_n \in \ell_0\otimes X$, 
the lower norm in $\sL(X^*,\ell^p)$ and $\sL(\ell^q,X)$ ($1/p + 1/q = 1$) is evaluated by 
\[ 
\| \sum_n \delta_n\otimes x_n \|_{\sB(\ell^p,X)} = \sup\{ (\sum_n |x^*(x_n)|^p)^{1/p}; x^* \in X^*_1 \} 
= \sup\{ \| \sum_n \lambda_n x_n\|; 
(\lambda_n) \in \ell^q_1 \}.  
\] 
Here is also an intermediate cross norm defined by 
\[ 
\| \sum_n \delta_n\otimes x_n\| = \left( \sum_n \| x_n\|^p\right)^{1/p} 
= \sup\{ \sum_n |\lambda_n|\, \| x_n\|; (\lambda_n) \in \ell^q_1 \}. 
\] 

We say that a sequence $(x_n) \in X$ is strongly (resp.~weakly) \textbf{{\boldmath$p$}-summable} if 
\[ 
\| (x_n)\|_p = \left( \sum_n \| x_n\|^p\right)^{1/p} < \infty
\]
(resp.~$(x^*(x_n)) \in \ell^p$ for each $x^* \in X^*$). 
The set $\ell^p_s(X)$ 
of strongly $p$-summable sequences is a Banach space and identified with 
the completion (denoted by $\ell^p\otimes_pX$) of 
$\ell^p\otimes X$ with respect to the intermediate cross norm $\| \cdot\|_p$. 

\begin{Example}~ 
\begin{enumerate}
\item
For $p=1$, the equality $\ell^1_s(X) = \ell^1\overline{\otimes} X$ holds 
because $\| \cdot\|_1$ on $\ell^1\otimes X$ coincides with the maximal cross norm. 
In fact, for $\varphi \in \sB(\ell^1,X)$, 
\[ 
|\varphi(\sum_n \delta_n \otimes x_n)| = |\sum_n \varphi(\delta_n,x_n)|
\leq \sum_n \| \varphi\|\, \| x_n\| 
= \| \varphi\|\, \| (x_n)\|_1,  
\] 
which shows that the maximal cross norm is majorized by the cross norm $\| \cdot\|_1$. 
\item
Let $p = 2$ and consider the case $X = \ell^2$. 
Since the norm $\| \cdot\|_2$ on $\ell^2_s(X)$ corresponds to the Hilbert-Schmidt norm on 
linear maps $\ell^2 \to X^*$, we have $\ell^2\otimes_2 X = \sC_2(\ell^2)$, 
which is different from $\ell^2\overline{\otimes} X = \sC_1(\ell^2)$. 
\end{enumerate}
\end{Example}

Let $\ell^p_w(X)$ be the vector space of weakly $p$-summable sequences. 
Note that each $(x_n) \in \ell^p_w(X)$ gives rise to 
a linear map $X^* \ni x^* \mapsto (x^*(x_n)) \in \ell^p$, which is bounded due to the closed graph 
theorem, and any bounded linear map of $X^*$ into $\ell^p$ arises this way. 
Thus $\ell^p_w(X)$ is identified with $\sL(X^*,\ell^p)$ so that $\ell^p\underline{\otimes}X$ 
is a closed subspace of $\ell^p_w(X)$ with their norms given by the common formula 
\[ 
\|(x_n)\|_{p,w} = \sup \{ (\sum_n |x^*(x_n)|^p)^{1/p}; x^* \in X^*_1\}. 
\]
Moreover, the obvious inclusion $\ell^p_s(X) \subset \ell^p_w(X) = \sL(X^*,\ell^p)$ 
is contractive and $\ell^p_s(X) \subset \ell^p\underline{\otimes} X \subset \ell^p_w(X)$. 

Note that the assignments $\ell^p_s(X)$ and $\ell^p_w(X)$ are functorial 
in the category of Banach spaces: For a $\phi: \sL(X,Y)$, the correspondence 
$(x_n) \mapsto (\phi x_n)$ gives rise to $\ell^p_s(\phi) \in \sL(\ell^p_s(X),\ell^p_s(Y))$ 
and $\ell^p_w(\phi) \in \sL(\ell^p_w(X),\ell^p_w(Y))$ 
in a functorial way together with inequalities 
$\| \ell^p_s(\phi)\| \leq \| \phi\|$ and $\| \ell^p_w(\phi)\| \leq \| \phi\|$. 

\begin{Definition}
For each $1 \leq p \leq \infty$, 
extend a bounded linear map $\phi: V \to W$ between Banach spaces to 
$\ell^p_s(\phi): \ell^p_s(V) \to \ell^p_s(W)$ by $(v_n) \mapsto (\phi(v_n))$ 
so that $\| \ell^p_s(\phi) \| = \| \phi\|$. 
A bounded linear map $\phi: V \to W$ is said to be \textbf{{\boldmath$p$}-summing} if 
$\ell^p_s(\phi)$  splits through $\ell^p\underline{\otimes} V$; 
$\phi^p_s(\phi)$ is expressed as a composition of the contractive embedding 
$\ell^p_s(V) \to \ell^p\underline{\otimes} V \subset \ell^p_w(V)$ 
and a bounded linear map ${\ell^p}(\phi): \ell^p\underline{\otimes} V \to \ell^p_s(W)$. 
\end{Definition}

\begin{Proposition}
The set $\sL^p(V,W)$ of $p$-summing linear maps is a Banach space with respect to 
the norm $\|\ell^p(\phi)\|$. 
\end{Proposition}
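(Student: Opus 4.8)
The plan is to build everything on two observations that make the assignment $\phi\mapsto\ell^p(\phi)$ rigid and comparable to the operator norm. First, I would check that the contractive embedding $j_V\colon\ell^p_s(V)\to\ell^p\underline{\otimes}V$ has dense range: the algebraic tensor product $\ell^p\otimes V$ is dense in $\ell^p\underline{\otimes}V$ by construction, while a finite sum $\sum_i a_i\otimes v_i$ (with $a_i\in\ell^p$, $v_i\in V$) is the sequence whose $n$-th term is $\sum_i a_i(n)v_i$, whose $\|\cdot\|_p$-norm is at most $\sum_i\|a_i\|_p\,\|v_i\|<\infty$ by Minkowski, so $\ell^p\otimes V\subset\ell^p_s(V)$ and hence $j_V(\ell^p_s(V))$ is dense in $\ell^p\underline{\otimes}V$. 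Consequently, if $\phi$ is $p$-summing the factoring map $\ell^p(\phi)\colon\ell^p\underline{\otimes}V\to\ell^p_s(W)$, characterized by $\ell^p(\phi)\circ j_V=\ell^p_s(\phi)$, is unique, so $\phi\mapsto\ell^p(\phi)$ is a well-defined map on $\sL^p(V,W)$. Second, evaluating on the one-term sequence $(v,0,0,\dots)$ gives $\|\phi\|=\|\ell^p_s(\phi)\|=\|\ell^p(\phi)\circ j_V\|\le\|\ell^p(\phi)\|$ because $j_V$ is contractive; this inequality is what will transfer Cauchyness from $\sL^p(V,W)$ to $\sL(V,W)$.

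Next I would verify that $\sL^p(V,W)$ is a linear subspace of $\sL(V,W)$ on which $\phi\mapsto\ell^p(\phi)$ is linear. If $\phi,\psi\in\sL^p(V,W)$ then $\ell^p_s(\phi+\psi)=\ell^p_s(\phi)+\ell^p_s(\psi)=\bigl(\ell^p(\phi)+\ell^p(\psi)\bigr)\circ j_V$, so the bounded map $\ell^p(\phi)+\ell^p(\psi)$ exhibits $\phi+\psi$ as $p$-summing, and by the uniqueness above $\ell^p(\phi+\psi)=\ell^p(\phi)+\ell^p(\psi)$; scalar multiples are handled identically. Hence $\phi\mapsto\|\ell^p(\phi)\|$ is a seminorm, and it is a norm because $\ell^p(\phi)=0$ forces $\ell^p_s(\phi)=\ell^p(\phi)\circ j_V=0$, whence $\phi v=0$ for every $v\in V$ on looking at $(v,0,0,\dots)$.

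For completeness, let $(\phi_k)$ be Cauchy in $\sL^p(V,W)$. From $\|\phi_k-\phi_l\|\le\|\ell^p(\phi_k)-\ell^p(\phi_l)\|$ the sequence $(\phi_k)$ is Cauchy in $\sL(V,W)$ and converges in operator norm to some $\phi\in\sL(V,W)$. Also $(\ell^p(\phi_k))$ is Cauchy in $\sL(\ell^p\underline{\otimes}V,\ell^p_s(W))$, which is complete since $\ell^p\underline{\otimes}V$ is a Banach space (by construction a closed subspace of a space of bounded bilinear forms) and $\ell^p_s(W)$ is a Banach space; let $T$ be its limit. I then claim $\phi$ is $p$-summing with $\ell^p(\phi)=T$: on one hand $\|\ell^p_s(\phi_k)-\ell^p_s(\phi)\|=\|\ell^p_s(\phi_k-\phi)\|=\|\phi_k-\phi\|\to0$, and on the other $\ell^p_s(\phi_k)=\ell^p(\phi_k)\circ j_V\to T\circ j_V$ in $\sL(\ell^p_s(V),\ell^p_s(W))$; comparing the two limits yields $\ell^p_s(\phi)=T\circ j_V$, so $\phi\in\sL^p(V,W)$ and, by uniqueness, $\ell^p(\phi)=T$. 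Therefore $\|\ell^p(\phi_k)-\ell^p(\phi)\|=\|\ell^p(\phi_k)-T\|\to0$, i.e.\ $\phi_k\to\phi$ in $\sL^p(V,W)$.

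The only step that is not pure formalism will be this last identification of $T$ with $\ell^p(\phi)$: it requires matching the two operator-norm limits of $(\ell^p_s(\phi_k))$ — one obtained from the isometry $\|\ell^p_s(\cdot)\|=\|\cdot\|$ on $\sL(V,W)$, the other from post-composing the limit $T$ with the embedding $j_V$ — and it is precisely the density of $j_V(\ell^p_s(V))$ in $\ell^p\underline{\otimes}V$ from the first paragraph that guarantees $T$ itself, not merely $T\circ j_V$, is pinned down. Everything else is the routine transport of the Banach-space structure of $\sL(\ell^p\underline{\otimes}V,\ell^p_s(W))$ along the injective linear map $\phi\mapsto\ell^p(\phi)$, the injectivity being exactly the norm axiom established above.
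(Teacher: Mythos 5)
Your proof is correct and follows the same basic strategy as the paper: identify $\sL^p(V,W)$ isometrically with the set of factoring maps inside the Banach space $\sL(\ell^p\underline{\otimes}V,\ell^p_s(W))$ and show that this set is closed, so that the Banach structure transports back. The one genuine difference is how the candidate limit $\phi$ is produced: the paper recovers $\phi(v)$ as a pointwise limit by evaluating $\ell^p(\phi_k)$ at $\delta_n\otimes v$ and then invokes the Banach--Steinhaus theorem for boundedness, whereas you use the comparison $\|\phi\|\leq\|\ell^p(\phi)\|$ (evaluation on one-term sequences) to make $(\phi_k)$ Cauchy in operator norm, which avoids Banach--Steinhaus altogether; both arguments then pin down the limiting factorization on a dense subspace -- $\ell_0\otimes V$ in the paper, $j_V(\ell^p_s(V))$ in your version. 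Your preliminary paragraph establishing density of $j_V(\ell^p_s(V))$ in $\ell^p\underline{\otimes}V$, hence uniqueness of $\ell^p(\phi)$, linearity of $\phi\mapsto\ell^p(\phi)$ and the norm axioms, supplies details the paper's proof leaves implicit, and it is indeed what justifies the final identification $\ell^p(\phi)=T$ rather than merely $\ell^p_s(\phi)=T\circ j_V$. Either route is sound; yours is marginally more elementary, the paper's is marginally shorter.
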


\begin{proof}
In fact, suppose that a sequence $(\ell^p(\phi_\alpha))$ converges to 
$\Phi$ in $\sL(\ell^p\underline{\otimes}V,\ell^p_s(W))$. 
Then $\langle \delta_n, \ell^p(\phi_\alpha)(\delta_n\otimes v) \rangle = \phi_\alpha(v)$ 
is convergent in $W$ for any $v \in V$ and any $n \geq 1$. 
If we set $\phi(v) = \lim \phi_\alpha(v)$, $\phi$ is bounded with $\| \phi\| \leq \sup \{ \|\phi\|_\alpha\}$ 
finite by Banach-Steinhauss theorem. 
Since $\langle\delta_n,\Phi(\delta\otimes v) \rangle = \phi(v)$ irrelevant of $n \geq 1$, 
$\ell^p_s(\phi) = \Phi$ on a dense linear subspace $\ell_0\otimes V$ of $\ell^p\underline{\otimes} V$, 
whence $\phi$ is $p$-summing and $\ell^p(\phi) = \Phi$. 
\end{proof}

The $p$-summing norm 
$\| \ell^p(\phi)\|$ is the infimum of $\rho>0$ satisfying 
\[
\left(\sum_{j=1}^n \| \phi(v_j)\|^p\right)^{1/p} 
\leq \rho \sup\left\{ (\sum |v^*(v_j)|^p)^{1/p}; v^* \in V^*_1\right\} 
\] 
for any finite sequence $\{ v_j\}_{1 \leq j \leq n}$ in $V$. 

\begin{Example}
When $V$ and $W$ are Hilbert spaces, $\sL^2(V,W) = \sC_2(V,W)$ 
so that $\| \ell^2(\phi)\|$ is equal to the Hilbert-Schmidt norm of $\phi: V \to W$. 

In fact, the condition on $\rho$ takes the form 
\[ 
\sum_j \| \phi(v_j)\|^2 \leq \rho^2 \sup \{ \sum |(v|v_j)|^2; v \in V_1 \}.  
\] 
In terms of the positive operator $h = \sum_j |v_j)(v_j|$, we can write
$\sum_j \| \phi v_j\|^2 = \text{tr}(\phi^*\phi h)$ and $\sum_j |(v|v_j)|^2 = (v|hv)$. 
Thus $\sup\{ \sum_j |(v|v_j)|^2; v \in V_1\} = \| h\|$ and the ineqaulity for $\rho$ becomes 
$\text{tr}(\phi^*\phi h) \leq \rho^2 \| h\|$. 
Since any positive finite rank operator $h$ is of the form $\sum_j |v_j)(v_j|$, this is further 
equivalent to $\text{tr}(\phi^*\phi h) \leq \rho^2$ for any 
finite rank operator $h$ satisfying $0 \leq h \leq 1$. Then, by maximizing on $h$, the condition on 
$\rho$ is boiled down to $\text{tr}(\phi^*\phi) \leq \rho^2$. 
\end{Example}

\begin{Proposition} Let $X,X',Y,Y'$ be Banach spaces. 
\begin{enumerate}
\item 
Let $a \in \sL(X',X)$ and $b \in \sL(Y,Y')$. Then, for $\phi \in \sL^p(X,Y)$, 
$b\phi a \in \sL^p(X',Y')$ and $\| \ell^p(b\phi a)\| \leq \| b\|\,\| \ell^p(\phi)\|\,\| a\|$. 
\item 
For $1 \leq p \leq q < \infty$, $\sL^p(X,Y) \subset \sL^q(X,Y)$ so that  
$\| \ell^q(\phi)\| \leq \| \ell^p(\phi)\|$ for $\phi \in \sL^p(X,Y)$. 
\end{enumerate}
\end{Proposition}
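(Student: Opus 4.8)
The plan is to work entirely with the metric characterisation of the $p$-summing norm recorded just before the statement: a bounded map $\phi$ lies in $\sL^p(V,W)$ exactly when some finite $\rho$ satisfies $\left(\sum_j \|\phi(v_j)\|^p\right)^{1/p} \le \rho\, \|(v_j)\|_{p,w}$ for every finite sequence $\{v_j\}$ in $V$, with $\|(v_j)\|_{p,w} = \sup\{(\sum_j |v^*(v_j)|^p)^{1/p} : v^* \in V_1^*\}$, and $\|\ell^p(\phi)\|$ is then the least such $\rho$. For (i) I would fix a finite sequence $\{x'_j\}$ in $X'$, use $\|b\phi a(x'_j)\| \le \|b\|\,\|\phi(ax'_j)\|$, and apply the characterisation of $\phi$ to the finite sequence $\{ax'_j\}$ in $X$, getting $\left(\sum_j \|b\phi a(x'_j)\|^p\right)^{1/p} \le \|b\|\,\|\ell^p(\phi)\|\,\|(ax'_j)\|_{p,w}$. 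Since $x^*(ax'_j) = (a^*x^*)(x'_j)$ and $a^*$ sends $X_1^*$ into the ball of radius $\|a\|$ of $(X')^*$, homogeneity in the functional gives $\|(ax'_j)\|_{p,w} \le \|a\|\,\|(x'_j)\|_{p,w}$; chaining the three estimates yields $b\phi a \in \sL^p(X',Y')$ with $\|\ell^p(b\phi a)\| \le \|b\|\,\|\ell^p(\phi)\|\,\|a\|$. This part is a routine unwinding of definitions.

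For (ii) the obstacle is that $\|\cdot\|_{q,w} \le \|\cdot\|_{p,w}$ when $p \le q$, so a naive comparison of weak norms runs the wrong way; the device is to feed $\phi$ sequences rescaled by scalar multipliers. Introduce $r \in (0,\infty]$ by $1/p = 1/q + 1/r$ (so $r = \infty$ when $p = q$, the degenerate case). I would fix a finite sequence $\{v_j\}$ in $X$ and nonnegative scalars $(\lambda_j)$ with $\sum_j \lambda_j^r \le 1$, apply the characterisation of $\phi$ to $\{\lambda_j v_j\}$, and bound the weak side by Hölder with conjugate exponents $r/p$ and $q/p$ (note $p/r + p/q = 1$):
\[
\sum_j \lambda_j^p |v^*(v_j)|^p \le \Bigl(\sum_j \lambda_j^r\Bigr)^{p/r}\Bigl(\sum_j |v^*(v_j)|^q\Bigr)^{p/q} \le \|(v_j)\|_{q,w}^p ,
\]
which gives $\sum_j \lambda_j^p \|\phi(v_j)\|^p \le \|\ell^p(\phi)\|^p\, \|(v_j)\|_{q,w}^p$. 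I would then maximise the left side over admissible $(\lambda_j)$: putting $\mu_j = \lambda_j^p$ and $c_j = \|\phi(v_j)\|^p$ turns the constraint into $\|(\mu_j)\|_{r/p} \le 1$, and $\ell^{r/p}$--$\ell^{q/p}$ duality for finite nonnegative sequences gives $\sup_\mu \sum_j \mu_j c_j = \bigl(\sum_j c_j^{q/p}\bigr)^{p/q} = \bigl(\sum_j \|\phi(v_j)\|^q\bigr)^{p/q}$. Taking $p$-th roots yields $\bigl(\sum_j \|\phi(v_j)\|^q\bigr)^{1/q} \le \|\ell^p(\phi)\|\,\|(v_j)\|_{q,w}$, so $\phi \in \sL^q(X,Y)$ with $\|\ell^q(\phi)\| \le \|\ell^p(\phi)\|$.

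The hard part is the bookkeeping in (ii): picking $r$ correctly and recognising the supremum over the multipliers $(\lambda_j)$ as an $\ell^{q/p}$-norm by duality. Both the Hölder step and the duality step are elementary facts about finite sequences and need no result beyond the excerpt, and when $p = q$ everything collapses harmlessly to the $\ell^\infty$--$\ell^1$ pairing so that no generality is lost; (i) presents no obstacle at all.
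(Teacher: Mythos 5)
Your proposal is correct and follows essentially the same route as the paper: part (i) is the functoriality/adjoint estimate $\|(ax'_j)\|_{p,w}\le\|a\|\,\|(x'_j)\|_{p,w}$ combined with $\|b\phi a x'_j\|\le\|b\|\,\|\phi(ax'_j)\|$, and part (ii) is the multiplier trick with H\"older for $1/p=1/q+1/r$, where your supremum over $(\lambda_j)$ via $\ell^{r/p}$--$\ell^{q/p}$ duality is just the paper's explicit extremal choice $\lambda_j=\|\phi x_j\|^{q/r}$ in disguise. No gaps.
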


\begin{proof} 
(i) follows from $\| \ell^p_s(a)\| \leq \|a\|$ and $\| \ell^p_w(b)\| \leq \| b\|$. 

(ii) Let $\phi \in \sL^p(X,Y)$. Given finite sequences $\{ x_j\}$ in $X$ and $\{ \lambda_j\}$ 
of scalars, the H\"older's inequality for the exponents $1/p = 1/q + 1/r$ is applied to obtain 
\[ 
\| (\lambda_j \phi x_j)\|_p \leq \| \ell^p(\phi)\|\, \| (\lambda_j x_j)\|_{p,w} 
\leq \| \ell^p(\phi)\|\, \| (\lambda_j)\|_r\, \|( x_j)\|_{q,w}.  
\] 
If we choose $\lambda_j$ so that 
\[ 
\left( \| (\lambda_j \phi x_j)\|_p\right)^p = \sum |\lambda_j|^p\ \| \phi x_j\|^p 
= \sum \| \phi x_j\|^q, 
\] 
i.e., $\lambda_j = \| \phi x_j\|^{q/r}$, then we have 
$\| (\phi x_j)\|_q = \| (\lambda_j \phi x_j)\|_p/\| (\lambda_j)\|_r$,  
which is combined with above inequality to get the inequality 
$\|(\phi x_j)\|_q \leq \| \ell^p(\phi)\|\, \| (x_j)\|_{q,w}$. 
\end{proof} 

\begin{Proposition}
The inclusion map $\ell^1 \to \ell^2$ is $1$-summing. 
\end{Proposition}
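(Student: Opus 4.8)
The plan is to verify the defining inequality of a $1$-summing map directly for the inclusion $\iota:\ell^1\to\ell^2$. By the characterization recorded just above, it suffices to produce a constant $\rho$ so that for every finite family $\{v_j\}_{1\le j\le n}$ in $\ell^1$ one has $\sum_j \|\iota v_j\|_{\ell^2} \le \rho\,\sup\{\sum_j |v^*(v_j)|;\ v^*\in(\ell^1)^*_1\}$. Since $(\ell^1)^*=\ell^\infty$ and its unit ball contains all sign sequences $\varepsilon=(\varepsilon_k)$ with $\varepsilon_k=\pm1$, the right-hand side dominates $\sup_\varepsilon \sum_j |\langle v_j,\varepsilon\rangle| = \sup_\varepsilon \sum_j \big|\sum_k \varepsilon_k v_j(k)\big|$. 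So the target reduces to the scalar inequality
\[
\sum_{j=1}^n \Big(\sum_k |v_j(k)|^2\Big)^{1/2} \le \rho \sup_{\varepsilon\in\{\pm1\}^{\mathbb N}} \sum_{j=1}^n \Big|\sum_k \varepsilon_k v_j(k)\Big|.
\]

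First I would reduce to real scalars (a complex entry splits into real and imaginary parts at the cost of a factor of $2$, or one treats $\mathbb C$ directly as $\mathbb R^2$ below), and then handle finitely many coordinates $k=1,\dots,N$ at a time, so each $v_j$ becomes a vector in $\mathbb R^N$ (or $\mathbb R^{2N}$). Now the key observation is that the supremum over sign sequences can be compared with an average over signs: for any real vectors $w_j\in\mathbb R^N$,
\[
\sup_{\varepsilon}\sum_j \Big|\sum_k \varepsilon_k w_j(k)\Big| \ \ge\ \mathbb E_\varepsilon \sum_j \Big|\sum_k \varepsilon_k w_j(k)\Big| \ =\ \sum_j \mathbb E_\varepsilon\Big|\sum_k \varepsilon_k w_j(k)\Big|,
\]
and the classical Khintchine inequality gives $\mathbb E_\varepsilon|\sum_k \varepsilon_k w_j(k)| \ge c\,(\sum_k w_j(k)^2)^{1/2}$ with an absolute constant $c>0$ (one can take $c=1/\sqrt2$, Szarek's constant). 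Summing over $j$ yields exactly the desired inequality with $\rho=1/c$ (or $\rho=2/c$ in the complex case), uniformly in $N$; letting $N\to\infty$ finishes it. Thus $\iota$ is $1$-summing with $\|\ell^1(\iota)\|\le 1/c$.

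Alternatively, to stay self-contained and avoid quoting Khintchine, one can invoke the Half Average Inequality (Lemma~\ref{...}, the one with constants $C_d$): apply it in $\mathbb R^2$ to the scalars, or rather use its averaging proof idea — $\int_{|e|=1}(v,e)_+\,de = C_d|v|$ — in the form that for $w_j\in\mathbb R$, choosing a single sign pattern that maximizes $\sum_j \langle v_j,e_0\rangle$ already beats the average $\sum_j C_d|v_j|^{(\cdot)}$; but the cleanest route for $\ell^1\to\ell^2$ is genuinely the one-dimensional Khintchine estimate applied coordinatewise as above. The main obstacle is purely the constant: one must be careful that the averaging is over the sign group acting on \emph{coordinates} $k$, not over the index $j$, and that the lower Khintchine bound for the $L^1$–$L^2$ comparison is the nontrivial direction (the easy Cauchy–Schwarz bound goes the wrong way), so the self-contained verification of $c>0$ is where the real content sits; everything else is bookkeeping about truncation and complexification.
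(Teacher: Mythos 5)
Your argument is correct and is essentially the paper's proof: both bound the weak $\ell^1$-norm from below by sign sequences (the Rademacher/sign choices lie in the unit ball of $\ell^\infty=(\ell^1)^*$), replace the supremum by the average over signs, and invoke the lower $L^1$--$L^2$ Khintchine inequality to recover $\sum_j\|v_j\|_{2}$. The only cosmetic difference is that the Khintchine inequality as stated in the paper's appendix already covers complex sequences, so your complexification step and the appeal to Szarek's constant are unnecessary refinements rather than new content.
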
 

\begin{proof}
First recall the lower Khintchine's inequality of the following form: There exists $C>0$ such that, 
for $a = (a_k) \in \ell^1 \subset \ell^2$, 
\[ 
\sqrt{\sum_k |a_k|^2} \leq C \int_0^1 \left| \sum_k a_k r_k(t) \right|\, dt. 
\]
Here $\{ r_k(t)\}_{k \geq 1}$ denotes the Rademacher functions. 

The Khintchine's inequality is then applied to $x_1, \cdots, x_n \in \ell^1$ to get 
\[ 
\sum_j \| x_j\|_2 \leq C \int_0^1 \sum_j \left| \sum_k x_{j,k} r_k(t) \right|\, dt. 
\] 
Since $(r_k(t))$ belongs to the unit ball of $\ell^\infty = (\ell^1)^*$ for $0 \leq t \leq 1$, 
the integrand is estimated as 
\[ 
\sum_j \left| \sum_k x_{j,k} r_k(t) \right| \leq 
\sup \{ \sum_j |x^*(x_j)|; x^* \in \ell^\infty, \| x^*\|_\infty \leq 1\} 
= \| (x_j)\|_{1,w}
\] 
and we finally obtain $\| (x_j)\|_2 \leq C \| (x_j)\|_{1,w}$, i.e., 
$\| \ell^1(\ell^1 \subset \ell^2)\| \leq C$. 
\end{proof}

\begin{Corollary}
For Hilbert spaces $X$ and $Y$, $\sL^p(X,Y) = \sL^2(X,Y)$ for $1 \leq p \leq 2$. 
\end{Corollary}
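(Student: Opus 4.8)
The plan is to deduce the result from two facts already at hand: the monotonicity of the $p$-summing classes in $p$ (part (ii) of the preceding Proposition) and the fact that the inclusion $\iota:\ell^1\to\ell^2$ is $1$-summing. Part (ii) of that Proposition gives
\[
\sL^1(X,Y)\subseteq\sL^p(X,Y)\subseteq\sL^2(X,Y)\qquad(1\le p\le 2),
\]
with the $p$-summing norms decreasing along the chain, so it suffices to establish the single reverse inclusion $\sL^2(X,Y)\subseteq\sL^1(X,Y)$; I will in fact prove the quantitative statement $\|\ell^1(\phi)\|\le C\|\ell^2(\phi)\|$, where $C$ denotes the ($1$-summing) norm of $\iota$, finite by the preceding Proposition. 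By the Example identifying $\sL^2(X,Y)$ with the Hilbert--Schmidt class $\sC_2(X,Y)$ (and $\|\ell^2(\phi)\|$ with the Hilbert--Schmidt norm), this reduces to: every Hilbert--Schmidt operator $\phi:X\to Y$ between Hilbert spaces is $1$-summing.

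First I would factor such a $\phi$ through $\iota$. A Hilbert--Schmidt operator is compact, so its singular value decomposition furnishes orthonormal systems $(e_n)_{n\ge 1}$ in $X$ and $(f_n)_{n\ge 1}$ in $Y$ together with scalars $s_n\ge 0$ satisfying $\sum_n s_n^2=\|\ell^2(\phi)\|^2<\infty$ and $\phi x=\sum_n s_n(e_n|x)f_n$. Put $A:X\to\ell^2$, $Ax=((e_n|x))_n$, which is a contraction by Bessel's inequality; $V:\ell^2\to Y$, $V((a_n)_n)=\sum_n a_nf_n$, which is an isometry because $(f_n)$ is orthonormal; and $D:\ell^2\to\ell^1$, $D((a_n)_n)=(s_na_n)_n$, which is bounded with $\|D\|\le(\sum_ns_n^2)^{1/2}$ by the Cauchy--Schwarz inequality. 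Then $\phi=V\circ\iota\circ D\circ A$.

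To conclude, I would invoke the Proposition that $\iota\in\sL^1(\ell^1,\ell^2)$ together with part (i) of the preceding Proposition (stability of $\sL^1$ under pre- and post-composition with bounded operators, with the product estimate on the $1$-summing norm): writing $\phi=V\circ\iota\circ(D\circ A)$ with $V$ and $D\circ A$ bounded, we get $\phi\in\sL^1(X,Y)$ and
\[
\|\ell^1(\phi)\|\le\|V\|\,\|\ell^1(\iota)\|\,\|D\circ A\|\le\|\ell^1(\iota)\|\,\Bigl(\sum_ns_n^2\Bigr)^{1/2}=C\,\|\ell^2(\phi)\|.
\]
This yields $\sL^2(X,Y)\subseteq\sL^1(X,Y)$, and combined with the chain of inclusions above it gives $\sL^p(X,Y)=\sL^2(X,Y)$ for all $1\le p\le 2$. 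The only step that is not a purely formal manipulation — and hence the main thing to get right — is the singular value decomposition of the Hilbert--Schmidt operator and the resulting factorisation $\phi=V\iota DA$ with $A,V$ contractions; everything after that is a two-line application of results already proved. (Alternatively one could bypass the spectral theorem and verify the defining inequality for $\ell^1(\phi)$ directly, inserting Khintchine's inequality inside the Hilbert space $Y$, but the factorisation route is shorter and reuses the work already done.)
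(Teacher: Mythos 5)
Your proof is correct and follows essentially the same route as the paper: both reduce via the singular value decomposition to a diagonal Hilbert--Schmidt operator, factor it as a bounded map $\ell^2\to\ell^1$ (Cauchy--Schwarz) followed by the $1$-summing inclusion $\ell^1\to\ell^2$, and conclude by the ideal property of $\sL^1$ together with the monotonicity $\sL^1\subseteq\sL^p\subseteq\sL^2$. You merely make explicit the isometries $A$ and $V$ that the paper absorbs into the phrase ``reduces the problem to the case $X=Y=\ell^2$,'' which is a harmless elaboration.
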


\begin{proof}
We need to show that every $\phi \in \sL^2(X,Y)$ belongs to $\sL^1(X,Y)$. 
Since $\phi$ is then in the Hilbert-Schmidt class, 
the spectral decomposition followed by polar decomposition of $\phi$ reduces the problem to 
the case $X=Y=\ell^2$ and $\phi$ is a multiplication operator by a sequence $(\phi_n) \in \ell^2$. 
Then the image of $\phi$ is included in $\ell^1$ so that $\phi: \ell^2 \to \ell^1$ is bounded: 
$\|(\phi_n \xi_n)\|_1 \leq \| (\phi_n)\|_2\, \|(\xi_n)\|_2$. Thus $\phi$ is realized 
as a bounded linear map $\ell^2 \to \ell^1$ followed by the inclusion map $\ell^1 \to \ell^2$ and 
we see that $\|\ell^1(\phi)\| \leq \|\ell^1(\ell^1 \subset \ell^2)\|\, \| \phi:\ell^2 \to \ell^1\| < \infty$. 
\end{proof} 

Given a cross norm $\|\cdot\|$ on $X\otimes Y$, consider a $\|\cdot\|$-bounded linear functional 
$\varphi: X\otimes Y \to \C$. Since elementary tensors are total in $X\otimes Y$ with respect to 
$\| \cdot\|$, the restriction $\varphi \mapsto \varphi|_{X\times Y} = \varphi_{X\times Y}$ is injective and 
$\varphi_{X\times Y}$ belongs to $\sB(X,Y)$ in view of $\| \varphi\|_{\sB(X,Y)} \leq \| \varphi\|$. 
Thus $(X\otimes Y)^*_{\| \cdot\|}$ is continuously embedded into $\sB(X,Y) = \cL(X,Y^*)$. 
We shall here give an expression of $\| \varphi\|_l$ in terms of the associated linear map 
$\phi: X \to Y^*$ defined by $\langle \phi(x),y\rangle = \varphi(x\otimes y)$.  

\begin{Theorem}[Swartz1976] A linear functional $\varphi$ on $X\otimes Y$ 
is $\|\cdot\|_l$-continuous if and only if the associated operator $\phi: X \to Y^*$ is $1$-summing. 
Moreover, we have $\| \varphi\|_l = \| \ell^1(\phi)\|$. 
\end{Theorem}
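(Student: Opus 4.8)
The plan is to reduce the statement to a direct comparison between two suprema: the dual norm $\|\varphi\|_l$ of a functional on $(X\otimes Y, \|\cdot\|_l)$, and the $1$-summing norm $\|\ell^1(\phi)\|$ of the associated operator $\phi:X\to Y^*$ given by $\langle\phi(x),y\rangle=\varphi(x\otimes y)$. Recall from the discussion preceding the theorem that $\varphi\mapsto\phi$ identifies $(X\otimes Y)^*$ (with any cross norm) with a subspace of $\sB(X,Y)=\sL(X,Y^*)$, so the correspondence is well-defined and linear; what must be shown is that $\|\cdot\|_l$-boundedness of $\varphi$ corresponds exactly to $\phi$ being $1$-summing, with equality of norms. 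I would prove the two inequalities $\|\varphi\|_l\le\|\ell^1(\phi)\|$ and $\|\ell^1(\phi)\|\le\|\varphi\|_l$ separately, each by unwinding the relevant definition as an infimum/supremum over finite representations $z=\sum_i x_i\otimes y_i$.

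For the first step I would start from the defining formula $\|z\|_l=\inf\{\sup\{\|\sum_i\alpha_i\|y_i\|x_i\|;|\alpha_i|\le1\};z=\sum_i x_i\otimes y_i\}$. Fix a representation $z=\sum_i x_i\otimes y_i$. Then $\varphi(z)=\sum_i\langle\phi(x_i),y_i\rangle$, and I want to bound $|\varphi(z)|$ by $\|\ell^1(\phi)\|$ times the bracketed supremum. The key is to choose scalars $\theta_i$ of modulus $1$ so that $\langle\phi(x_i),y_i\rangle=\theta_i|\langle\phi(x_i),y_i\rangle|$, and then to recognize, via the $p$-summing inequality with $p=1$ applied to the finite sequence $(x_i)$ in $X$, that $\sum_i\|\phi(x_i)\|\le\|\ell^1(\phi)\|\sup\{\sum_i|x^*(x_i)|;x^*\in X^*_1\}$. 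Since $|\langle\phi(x_i),y_i\rangle|\le\|\phi(x_i)\|\,\|y_i\|$, a little rearrangement (pairing the $\|y_i\|$ weights with the $x_i$'s, exactly as in the computation displayed just before the theorem equating the bracketed sup with $\sup\{\sum_i\|x_i\|\,|y^*(y_i)|\}$ — here with the roles of the two factors played by $\phi(x_i)$-norms and $y_i$) should give $|\varphi(z)|\le\|\ell^1(\phi)\|\cdot\sup\{\|\sum_i\alpha_i\|y_i\|x_i\|;|\alpha_i|\le1\}$ after passing through the closed graph / $\ell^1_w$ identification. Taking the infimum over representations yields $|\varphi(z)|\le\|\ell^1(\phi)\|\,\|z\|_l$, hence $\|\varphi\|_l\le\|\ell^1(\phi)\|$.

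For the reverse inequality I would use the characterization that $\|\ell^1(\phi)\|$ is the least $\rho$ with $\sum_j\|\phi(x_j)\|\le\rho\sup\{\sum_j|x^*(x_j)|;x^*\in X^*_1\}$ for every finite sequence $(x_j)$ in $X$. Given such a sequence, for each $j$ pick $y_j\in Y_1$ with $\|\phi(x_j)\|$ close to $\langle\phi(x_j),y_j\rangle$ (possible up to $\epsilon$), and for each $j$ pick $\theta_j$ of modulus one with $\theta_j\langle\phi(x_j),y_j\rangle\ge0$; then form $z=\sum_j\theta_j x_j\otimes y_j$. On one hand $\varphi(z)=\sum_j\theta_j\langle\phi(x_j),y_j\rangle\approx\sum_j\|\phi(x_j)\|$. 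On the other hand $\|z\|_l\le\sup\{\|\sum_j\alpha_j\|y_j\|x_j\|;|\alpha_j|\le1\}\le\sup\{\|\sum_j\alpha_j x_j\|;|\alpha_j|\le1\}$ since $\|y_j\|\le1$, and by the same equality-chain used in the cross-norm discussion (applied with a functional $x^*\in X^*_1$) this last supremum equals $\sup\{\sum_j|x^*(x_j)|;x^*\in X^*_1\}$, precisely the weak $\ell^1$-norm of $(x_j)$. Combining, $\sum_j\|\phi(x_j)\|\le\|\varphi\|_l\cdot\|(x_j)\|_{1,w}+O(\epsilon)$; letting $\epsilon\to0$ and taking the infimum over $\rho$ gives $\|\ell^1(\phi)\|\le\|\varphi\|_l$, and in particular $\phi$ is $1$-summing whenever $\varphi$ is $\|\cdot\|_l$-continuous. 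The converse implication (that $1$-summing $\phi$ yields a $\|\cdot\|_l$-bounded $\varphi$) is already contained in the first inequality.

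The main obstacle I anticipate is the careful bookkeeping in matching the bracketed supremum $\sup\{\|\sum_i\alpha_i\|y_i\|x_i\|;|\alpha_i|\le1\}$ to the weak-$\ell^1$ norm $\|(x_i)\|_{1,w}$ while keeping track of the $\|y_i\|$ factors, and making sure the choices of $y_j$'s and phases in the reverse direction are simultaneously compatible so that one genuinely recovers $\sum_j\|\phi(x_j)\|$ rather than a weighted variant. The $\epsilon$'s from approximating operator norms by values $\langle\phi(x_j),y_j\rangle$ must be absorbed uniformly; since the finite sequence is fixed this is routine but needs to be said. The identification of $\ell^1_w(X)$ with $\sL(X^*,\ell^1)$ via the closed graph theorem, used to pass between the two forms of the weak norm, should be invoked explicitly rather than left implicit.
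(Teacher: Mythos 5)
Your proposal is correct and follows essentially the same route as the paper's proof: both directions are obtained by rewriting $\|z\|_l$ as $\inf\sup\{\sum_j\|y_j\|\,|x^*(x_j)|;x^*\in X^*_1\}$, proving $\|\varphi\|_l\le\|\ell^1(\phi)\|$ by applying the $1$-summing inequality to the weighted vectors $\|y_j\|x_j$, and proving $\|\ell^1(\phi)\|\le\|\varphi\|_l$ by choosing near-norming $y_j\in Y_1$ (with phases) and testing $\varphi$ on $z=\sum_j x_j\otimes y_j$. The minor stylistic differences (explicit phases $\theta_j$, mention of the closed-graph identification, order of the two inequalities) do not change the argument.
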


\begin{proof} We first rewrite the definition of $\| \cdot\|_l$ slightly. 
For $z = \sum_j x_j\otimes y_j \in X\otimes Y$, we have 
\begin{align*}
\sup \{ \left\|\sum \alpha_j \| y_j\| x_j \right\|; |\alpha_j| \leq 1 \}
&= \sup \{ \left|\sum \alpha_j x^*(\| y_j\| x_j)\right|; x^* \in X^*_1, |\alpha_j| \leq 1 \}\\
&= \sup\{ \sum |x^*(\| y_j\| x_j)|; x^* \in X^*_1 \} 
\end{align*}
and hence  
\[ 
\| z\|_l = \inf \sup\{ \sum \| y_j\|\,|x^*(x_j)|; x^* \in X^*_1 \}, 
\]
where the infimum is taken over possible expressions $\sum_j x_j\otimes y_j$ of $z \in X\otimes Y$. 

Suppose that $\| \varphi\|_l < \infty$. 
Given a finite sequence $\{ x_j\}_{1 \leq j \leq n}$ in $X$ and $\epsilon>0$, 
choose $y_j \in Y_1$ so that $\| \phi(x_j)\| - \epsilon \leq \langle \phi(x_j),y_j\rangle$ 
for $1 \leq j \leq n$ and 
set $z = \sum_j x_j\otimes y_j \in X\otimes Y$. Then 
\begin{align*}
\sum_{j=1}^n \| \phi(x_j)\| - n \epsilon 
&\leq \sum \langle \phi(x_j),y_j\rangle 
= \varphi(z) \leq \| \varphi\|_l \| z\|_l\\ 
&\leq \| \varphi\|_l \sup\{ \sum |x^*(\| y_j\| x_j)|; x^* \in X^*_1 \} 
\leq \| \varphi\|_l \sup\{ \sum |x^*(x_j)|; x^* \in X^*_1 \}.   
\end{align*}
Since the first and the last expressions are independent of the choice of $y_j$, 
we can take the limit $\epsilon \to 0$ to have  
$\sum_j \| \phi(x_j)\| \leq \| \varphi\|_l \| \sum_j \delta_j\otimes x_j\|_{\underline{\otimes}}$, 
which shows that $\phi$ is $1$-summing and $\| \varphi\|_l \leq \| \ell^1(\phi)\|$. 

To get the reverse inequality, assume that $\phi$ is $1$-summing and, 
for $z = \sum x_j\otimes y_j \in X\otimes Y$, estimate as 
\[ 
|\varphi(z)| = | \sum \langle \phi(x_j),y_j\rangle| 
\leq \sum \| \phi(x_j)\|\, \| y_j\| 
= \sum \| \phi(\| y_j\| x_j)\|
\leq \| \ell^1(\phi)\| \sup\{ \sum |x^*(\| y_j\| x_j)|; x^* \in X^*_1 \}. 
\]
By taking infimum over possible expressions $\sum_j x_j\otimes y_j = z$, 
we get $|\varphi(z)| \leq \|\ell^1(\phi)\|\, \|z\|_l$, i.e., $\| \varphi\|_l \leq \| \ell^1(\phi)\|$. 
\end{proof}

\begin{Corollary}\label{corollary}
For Hilbert spaces $V$ and $W$, the Banach space space $V\otimes_lW$ is topologically equal to 
the Hilbert space $V\otimes W$. 
Consequently, the tensor product semi-measure $\varphi\otimes \psi: \sA\otimes \sB \to V\otimes W$ 
is lifted to a measure $\sA\otimes_\sigma\sB \to V\otimes W$. 
\end{Corollary}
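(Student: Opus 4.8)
The bulk of the work is the first assertion: on the algebraic tensor product $V\otimes W$ the cross norm $\|\cdot\|_l$ is equivalent to the Hilbert space norm $\|\cdot\|_2$ (the Hilbertian cross norm, $\|\sum_i x_i\otimes y_i\|_2^2=\sum_{i,j}(x_i|x_j)(y_i|y_j)$); the displayed consequence will then follow almost at once. The plan is to compare the two \emph{dual} norms on $V\otimes W$ and transport the equivalence back to $V\otimes W$ itself.

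First I would invoke Swartz's theorem above: a functional $\varphi$ on $V\otimes W$ is $\|\cdot\|_l$-continuous exactly when its associated operator $\phi\colon V\to W^*$, determined by $\langle\phi(x),y\rangle=\varphi(x\otimes y)$, is $1$-summing, and then $\|\varphi\|_l=\|\ell^1(\phi)\|$; so $(V\otimes_lW)^*=\sL^1(V,W^*)$ isometrically via $\varphi\mapsto\phi$. Since $V$ and $W^*$ are Hilbert spaces, the Corollary above that $\sL^p(X,Y)=\sL^2(X,Y)$ for Hilbert $X,Y$ gives $\sL^1(V,W^*)=\sL^2(V,W^*)$ as sets; both are Banach spaces and the inclusion $\sL^1\subset\sL^2$ is contractive, so the open mapping theorem makes this a topological isomorphism. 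By the Example identifying $\sL^2$ with the Hilbert--Schmidt class between Hilbert spaces, $\sL^2(V,W^*)=\sC_2(V,W^*)$ isometrically. Hence, along the single map $\varphi\mapsto\phi$,
\[
(V\otimes_lW)^*=\sL^1(V,W^*)=\sL^2(V,W^*)=\sC_2(V,W^*),
\]
with a norm equivalent to the Hilbert--Schmidt norm.

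Next I would note that $\sC_2(V,W^*)$ is itself the dual of the Hilbert space $V\otimes W$: the pairing $\langle\sum_i x_i\otimes y_i,\phi\rangle=\sum_i\langle\phi(x_i),y_i\rangle$ realises $\sC_2(V,W^*)$ isometrically as $(V\otimes W,\|\cdot\|_2)^*$ (the self-duality of a Hilbert space, here through $V^*\otimes W^*\cong\sC_2(V,W^*)$, i.e. the Hilbert-space instance of the Example on $\ell^2\otimes_2X$ read off in orthonormal bases), and the identification is again $\varphi\mapsto\phi$. Thus the $\|\cdot\|_l$-dual norm and the $\|\cdot\|_2$-dual norm sit on the very same space of functionals on $V\otimes W$ and are equivalent there. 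Because $\|\cdot\|_l$ and $\|\cdot\|_2$ are genuine norms (not mere seminorms), Hahn--Banach gives $\|z\|=\sup\{|\varphi(z)|;\ \|\varphi\|^*\le1\}$ for each, so equivalence of the dual norms forces equivalence of $\|\cdot\|_l$ and $\|\cdot\|_2$ on $V\otimes W$, and the completions coincide as topological vector spaces: $V\otimes_lW=V\otimes W$. For the final clause, running the same argument with $V$ and $W$ interchanged gives $\|\cdot\|_r$ equivalent to $\|\cdot\|_2$, hence $\|\cdot\|_m=\tfrac12(\|\cdot\|_l+\|\cdot\|_r)$ is equivalent to $\|\cdot\|_2$ and $V\otimes_mW=V\otimes W$ topologically; the Kluv\'anek 1973 theorem above then provides a measure $\sA\otimes_\sigma\sB\to V\otimes_mW=V\otimes W$ with $\phi(A\times B)=\varphi(A)\otimes\psi(B)$, the required lift of $\varphi\otimes\psi$. (Equivalently, the proof of that theorem already establishes $(\mu\times\nu)$-continuity of $\varphi\otimes\psi$ measured in $\|\cdot\|_l$, so Theorem~\ref{K1961} yields the extension into $V\otimes_lW=V\otimes W$ directly.)

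The step I expect to be delicate is bookkeeping rather than analysis: checking that the three descriptions of $(V\otimes_lW)^*$ --- Swartz's isometry, the open-mapping equivalence $\sL^1(V,W^*)=\sL^2(V,W^*)$, and $\sL^2(V,W^*)=\sC_2(V,W^*)=(V\otimes W)^*$ --- are all realised by the one map $\varphi\mapsto\phi$, so that "the two dual norms are equivalent" is a statement about a single space of functionals, together with the elementary observation that equivalence of the dual norms of two norms passes back to equivalence of the norms themselves.
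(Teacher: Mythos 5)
Your proposal is correct, and it rests on exactly the same pillars as the paper's proof: Swartz's theorem identifying $(V\otimes_lW)^*$ with the $1$-summing operators $V\to W^*$, the corollary that $\sL^1=\sL^2$ for Hilbert spaces (with equivalent norms, via the contractive inclusion plus completeness or, more explicitly, the Khintchine constant from that corollary's proof), and the identification of $\sL^2(V,W^*)$ with the Hilbert--Schmidt class, i.e.\ with the dual of the Hilbert tensor product. Where you diverge is the finishing move: the paper observes that $(V\otimes_lW)^*$ is hilbertian, hence so is the bidual, hence so is $V\otimes_lW$ as a closed subspace of it; being hilbertian it is reflexive and therefore coincides with its bidual, which is the dual of the Hilbert--Schmidt class, i.e.\ the Hilbert space $V\otimes W$. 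You instead stay on the algebraic tensor product: since the $\|\cdot\|_l$-dual and the $\|\cdot\|_2$-dual are the same space of functionals (all realised by the one map $\varphi\mapsto\phi$, the bilinear-versus-sesquilinear bookkeeping being norm-irrelevant) with equivalent dual norms, Hahn--Banach pulls the equivalence back to $\|\cdot\|_l\sim\|\cdot\|_2$ on $V\otimes W$, so the completions agree. Your route is slightly longer but more elementary (no reflexivity or bidual identification needed) and yields the norm equivalence on $V\otimes W$ explicitly; the paper's is shorter but leans on the reflexivity of hilbertian spaces. You are also more careful than the paper about the final clause: the symmetry argument giving $\|\cdot\|_r\sim\|\cdot\|_2$, hence $\|\cdot\|_m\sim\|\cdot\|_2$, so that Kluv\'anek's product-measure theorem lands in $V\otimes_mW=V\otimes W$ (or, alternatively, rerunning its $\mu\times\nu$-continuity estimate in $\|\cdot\|_l$ and applying Theorem~\ref{K1961}) is precisely what the paper's ``Consequently'' suppresses.
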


\begin{proof}
Since $(V\otimes_lW)^*$ is hilbertian, $V\otimes_lW$ itself is hilbertitan as a closed linear subspace 
of a hilbertian $(V\otimes_l W)^{**}$. Then $V\otimes_lW$ is topologically equal to 
$(V\otimes_lW)^{**}$, which is nothing but the ordinary Hilbert space tensor product $V\otimes W$ 
as the dual of the space of Hilbert-Schmidt operators. 
\end{proof}

Now we can state and prove a theorem of 
our main concern in this notes. The following is mostly contained 
in Swartz1976, but not whole. 
Also, relevant ingredients for the proof is scattered over 
variours papers by many researchers. So, we shall try here to show a minimal route for access. 

A semi-measure $\phi$ on a Boolean algebra $\sB$ 
with values in a Hilbert space is said to be \textbf{orthogonal} if 
$\phi(A) \perp \phi(B)$ whenever $A \cap B = \emptyset$ in $\sB$. 
The semi-variation of an orthogonal semi-measure $\phi$ takes an especially simple form: 
$|\phi|(A) = \| \phi(A)\|$ for $A \in \sB$, which is not additive unless $\phi$ is supported by 
an atomic set in $\sB$ but always bounded with $\| \phi\| = \| \phi(S)\|$. 
As a result of boundedness, $\phi$ is squeezing. 
In fact, if $\bigsqcup A_n$ and $\|\phi(A_n)\| \geq \delta$ for all $n \geq 1$, then 
$\|\phi(S)\| \geq \|\phi(\sqcup_{n=1}^N A_n)\| \geq \sqrt{\sum_{n=1}^N \|\phi(A_n)\|^2} 
\geq \sqrt{N\delta}$ can increase unlimitedly. 

\begin{Lemma}\label{lemma}
Let $\sH$ be a finite-dimensional Hilbert space and $T$ be a positive operator on $\sH$. 
Then we can find orthogonal measures $\xi,\eta: 2^\N \to \sH$ satisfying 
$\|\xi\| = 1 = \|\eta\|$ and $\| (\xi|T\eta) \| = \| T\|_2$. 

Here the complex semi-measure $(\xi|T\eta)$ on $2^\N\otimes 2^\N \subset 2^{\N\times \N}$ is specified by 
$(\xi|T\eta)(A\times B) = (\xi(A)|T\eta(B))$ for $A, B \in 2^\N$ and $\| T\|_2$ denotes 
the Hilbert-Schmidt norm of $T$. 
\end{Lemma}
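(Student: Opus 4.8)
The plan is to diagonalize $T$ and then choose the two vector measures so that the associated complex measure $(\xi|T\eta)$ becomes, after a suitable change of variables on $\N\times\N$, essentially the ``diagonal'' of $T$, whose semi-variation recovers the Hilbert–Schmidt norm. Concretely, fix an orthonormal basis $\{\delta_1,\dots,\delta_d\}$ of $\sH$ diagonalizing $T$, so $T\delta_k = t_k\delta_k$ with $t_k\ge 0$, and $\|T\|_2 = (\sum_k t_k^2)^{1/2}$. Define $\xi,\eta\colon 2^\N\to\sH$ by
\[
\xi(A) = \sum_{k\in A,\, k\le d} a_k\,\delta_k,
\qquad
\eta(A) = \sum_{k\in A,\, k\le d} b_k\,\delta_k,
\]
for scalars $a_k,b_k$ to be chosen (with all $a_k,b_k$ for $k>d$ irrelevant, i.e.\ zero). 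These are genuinely countably additive (finite rank), orthogonal since disjoint $A,B$ pick out disjoint basis vectors, and $\|\xi\| = \|\xi(\N)\| = (\sum_{k\le d}|a_k|^2)^{1/2}$, similarly for $\eta$; so the normalization $\|\xi\|=\|\eta\|=1$ forces $\sum|a_k|^2 = \sum|b_k|^2 = 1$.

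Next I would compute $(\xi|T\eta)$. For $A,B\in 2^\N$,
\[
(\xi(A)|T\eta(B)) = \sum_{k\in A\cap B,\, k\le d} \overline{a_k}\,b_k\,t_k,
\]
so as a finitely additive set function on $2^\N\otimes 2^\N$ it is supported on the diagonal: $(\xi|T\eta)$ assigns mass $\overline{a_k}b_k t_k$ to the singleton $\{(k,k)\}$ and $0$ to any $(j,k)$ with $j\ne k$. Hence $(\xi|T\eta)$ extends trivially to a complex measure on $2^{\N\times\N}$ whose total variation is $\sum_{k\le d} |a_k|\,|b_k|\,t_k$; and since for an orthogonal-type diagonal measure the semi-variation $\|(\xi|T\eta)\|$ equals this total variation (each atom contributes its absolute value, and the $\pi$-inequality is an equality on a purely atomic diagonal because we may choose signs $\alpha_k$ freely in Lemma~\ref{variation}), we get $\|(\xi|T\eta)\| = \sum_{k\le d} t_k\,|a_k|\,|b_k|$.

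It remains to maximize $\sum_k t_k |a_k||b_k|$ over $\sum|a_k|^2 = \sum|b_k|^2 = 1$. By Cauchy–Schwarz this is at most $(\sum t_k^2|a_k|^2)^{1/2}(\sum|b_k|^2)^{1/2} \le (\sum t_k^2)^{1/2} = \|T\|_2$, with equality when $|a_k|^2 = |b_k|^2 = t_k^2/\sum_j t_j^2$ (if $T=0$ the statement is vacuous, so assume $\|T\|_2>0$); real nonnegative $a_k=b_k$ then do the job. This yields orthogonal measures $\xi,\eta$ with $\|\xi\|=\|\eta\|=1$ and $\|(\xi|T\eta)\| = \|T\|_2$, as required.

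The only genuinely delicate point is the claim that the semi-variation of the diagonal measure equals its total variation $\sum_k t_k|a_k||b_k|$ rather than merely lying within a factor of $\pi$; I would make this precise by invoking Lemma~\ref{variation} directly — partitioning $\N\times\N$ and choosing $\alpha_{(k,k)} = \overline{\operatorname{sgn}(\overline{a_k}b_k)}$ on the diagonal atoms (and anything on the rest, which carries no mass) gives $\|\sum\alpha_j(\xi|T\eta)(C_j)\| = \sum_k t_k|a_k||b_k|$, so the supremum defining $|(\xi|T\eta)|(\N\times\N)$ is at least this, while the reverse bound is the obvious triangle inequality. Everything else is routine bookkeeping about finite-rank vector measures.
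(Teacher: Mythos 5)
Your general setup is consistent with the paper's (orthogonal measures are determined by their atoms, the total semi-variation of $(\xi|T\eta)$ is $\sum_{j,k}|(\xi_j|T\eta_k)|$, and the normalizations amount to $\sum_k|a_k|^2=\sum_k|b_k|^2=1$), but the optimization step contains a genuine error that kills the construction. With $\xi$ and $\eta$ both supported on the same eigenbasis of $T$ you correctly get $\|(\xi|T\eta)\|=\sum_k t_k|a_k|\,|b_k|$; however the maximum of this under your constraints is $\max_k t_k=\|T\|$ (operator norm), not $\|T\|_2$. Your chain $\sum_k t_k|a_k||b_k|\le(\sum_k t_k^2|a_k|^2)^{1/2}\le(\sum_k t_k^2)^{1/2}$ is valid, but equality in the second inequality forces $|a_k|=1$ for every $k$ with $t_k>0$, which is incompatible with $\sum_k|a_k|^2=1$ unless $T$ has rank one; and your proposed weights $|a_k|^2=|b_k|^2=t_k^2/\sum_j t_j^2$ actually give the value $\sum_k t_k^3/\sum_j t_j^2\le\max_k t_k$. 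Concretely, for $T$ the identity on $\C^2$ your diagonal construction yields at most $1$, while $\|T\|_2=\sqrt2$. The gap between what the diagonal choice can reach ($\|T\|$) and what the lemma asserts ($\|T\|_2$) is precisely the nontrivial content of the statement.

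The missing idea is to let $\xi$ and $\eta$ live on two \emph{different} orthonormal bases so that all $n^2$ entries $(\xi_j|T\eta_k)$ contribute to the variation, not just the diagonal ones. The paper (following Dudley--Pakula) keeps $\eta$ on the eigenbasis $\{g_k\}$, $Tg_k=t_kg_k$, with weights $\|\eta_k\|=t_k/\|T\|_2$, but supports $\xi$ on the Fourier-rotated basis $\{e_j\}$ with $(e_j|g_k)=e^{2\pi i jk/n}/\sqrt n$ and uniform weights $\|\xi_j\|=1/\sqrt n$; then $|(\xi_j|T\eta_k)|=t_k^2/(n\|T\|_2)$ for all $j,k$, and summing over $j,k$ gives exactly $\|T\|_2$. (The reverse bound $\sum_{j,k}\|\xi_j\|\,\|\eta_k\|\,|(e_j|Tg_k)|\le\|T\|_2$ for normalized orthogonal measures follows from Cauchy--Schwarz applied over the double index, so equality indeed holds.) Your bookkeeping about atoms, orthogonality and normalization can be retained verbatim, but the measures must be chosen with this ``mutually unbiased'' twist; without it the statement you end up proving is $\sup\|(\xi|T\eta)\|\ge\|T\|$, which is too weak for the application in the main theorem.
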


\begin{proof}
Since the Boolean algebra $2^\N\otimes 2^\N$ is atomic, 
\[ 
\|(\xi|T\eta)\| = \sum_{j,k} |(\xi_j|T\eta_k)|, 
\quad \xi_j = \xi(\{ j\}), \eta_k = \eta(\{ k\}). 
\] 
We now restrict $\xi$ and $\eta$ to be supported by the set $\{ 1,2,\dots,\dim\sH\} \subset \N$ 
and choose orthonormal bases $\{ e_j\}$ and $\{ f_j\}$ in $\sH$ so that 
$\xi_j = \|\xi_j\| e_j$ and $\eta_j = \| \eta_j\| f_j$ for $1 \leq j \leq \dim \sH$. 
Then, under the condition $\| \xi\| = \| \eta\| = 1$, orthogonal measures $\xi$ and $\eta$ 
are compactly parametrized and the problem is reduced to showing that $\| T\|_2$ is realized as  
%\| T\|_2 &\leq \max \{ \|(\xi|T\eta)\|; \|\xi\| = 1 = \|\eta\| \}\\  
\[ 
\max \{ \sum_{j,k} \| \xi_j\|\,|(e_j|T f_k)|\,\| \eta_k\|; 
\sum_j \| \xi_j\|^2 = 1 = \sum_k \| \eta_k\|^2\}
= \| [e|T f] \|
\]
for some orthonomal bases $\{ e_j\}$, $\{ f_k\}$ of $\sH$. 
Here $\| [e|Tf] \|$ denotes the operator norm of the matrix $[e|Tf] = (|(e_j|Tf_k)|)$. 

Let $T = \sum_{1 \leq j \leq \dim\sH} t_j |g_j)(g_j|$ be a spectral expression 
with $\{ g_j\}$ an orthonormal basis. 
If we set $f_j = g_j$, then $|(e_j|Tf_k)| = |(e_j|g_k)| t_k$ and $(e_j|g_k)$ can be any unitary matrix, 
which allows us to choose $(e_j|g_k) = e^{2\pi ijk/n}/\sqrt{n}$ and get 
$\| [e|Tg]\| = \sqrt{t_1^2 + \dots + t_n^2} = \| T\|_2$: 
\[ 
[e|Tg] = \frac{1}{\sqrt{n}} 
\begin{pmatrix}
1 & \dots & 1\\
\vdots & \ddots & \vdots\\
1 & \dots & 1
\end{pmatrix}
\begin{pmatrix}
t_1 & & 0\\
 & \ddots & \\
0 &  & t_n
\end{pmatrix} 
= \frac{1}{\sqrt{n}} 
\begin{pmatrix}
1\\
\vdots\\
1
\end{pmatrix} 
\begin{pmatrix}
t_1 & \cdots & t_n
\end{pmatrix}
% \quad\text{and}\quad 
% [e|Tg][e|Tg]^* = \frac{t_1^2 + \dots + t_n^2}{n} 
% \begin{pmatrix}
% 1 & \dots & 1\\
% \vdots & \ddots & \vdots\\
% 1 & \dots & 1
% \end{pmatrix}
\]
% with the norm of the last matrix equal to $n$. 
with the norm of the last matrix equal to $\| (t_1,\cdots,t_n)\| = \sqrt{t_1^2 + \cdots + t_n^2}$. 
\end{proof}

\begin{Remark}
For a real Hilbert space of $\dim \sH = 2^m$, the conclusion of Lemma remains true by 
taking $(e_j|e_k)$ to be the $m$-times tensor product of two-dimensional reflection (or rotation) 
matrix by an angle $\pi/4$ as utilized in [Dudley-Pakula1972].
\end{Remark} 

\begin{Theorem} 
Let $T: \sH \to \sK$ be a bounded linear map between Hilbert spaces. 
Then the following conditions are equivalent. 
\begin{enumerate}
\item 
Given an $\sH$-valued measure $\xi$ on $\sA$ and a $\sK$-valued measure $\eta$ on $\sB$, 
the semi-measure $(\xi|T\eta)$ on $\sA\otimes \sB$ is extended to a complex measure on 
$\sA\otimes_\sigma\sB$. 
\item 
Given a $\overline{T\sH}$-valued orthogonal measure $\xi$ on $2^\N$ and 
a $\ker(T)^\perp$-valued orthogonal measure $\eta$ on $2^\N$, the semi-measure $(\xi|T\eta)$ on 
$2^\N \otimes 2^\N$ is bounded. 
\item 
$T$ is in the Hilbert-Schmidt class. 
\end{enumerate}
\end{Theorem}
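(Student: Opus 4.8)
The plan is to prove the cycle of implications (i) $\Longrightarrow$ (ii) $\Longrightarrow$ (iii) $\Longrightarrow$ (i), with the bulk of the work concentrated in (ii) $\Longrightarrow$ (iii).

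For (i) $\Longrightarrow$ (ii) I would argue by specialization and then apply boundedness. An orthogonal measure $\xi$ on $2^\N$ with values in $\overline{T\sH}$ and an orthogonal measure $\eta$ on $2^\N$ with values in $\ker(T)^\perp$ are in particular $\sH$-valued (resp.\ $\sK$-valued) measures on the $\sigma$-algebra $2^\N$, so hypothesis (i) gives an extension of $(\xi|T\eta)$ to a complex measure on $2^\N\otimes_\sigma 2^\N = 2^{\N\times\N}$; a complex measure on a $\sigma$-algebra has finite total variation by Proposition~\ref{complex}~(iii), which is exactly the boundedness asserted in (ii) (note $\|(\xi|T\eta)\|$ restricted to the Boolean algebra $2^\N\otimes 2^\N$ is dominated by the total variation of the extension).

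The heart of the matter is (ii) $\Longrightarrow$ (iii). Here I would argue by contraposition: suppose $T\notin\sC_2(\sH,\sK)$, i.e.\ $\|T\|_2=\infty$. Replacing $T$ by $|T| = (T^*T)^{1/2}$ via the polar decomposition $T = U|T|$ does not change the Hilbert-Schmidt norm, and conjugating measures by the partial isometry $U$ only rearranges the inner products, so it suffices to produce, for every $N$, finite-dimensional orthogonal measures whose associated complex semi-measure has total variation $\geq N$. Since $\|T\|_2=\infty$, one can find finite-dimensional subspaces on which the compression of $|T|$ (a positive operator) has Hilbert-Schmidt norm exceeding any prescribed bound; to each such compression apply Lemma~\ref{lemma}, which yields orthogonal measures $\xi,\eta$ on $2^\N$ (supported on a finite set) with $\|\xi\|=\|\eta\|=1$ and $\|(\xi|T\eta)\| = \|T\text{-compression}\|_2$ arbitrarily large. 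Patching these up on disjoint blocks of $\N$ (scaling the $n$-th block by $2^{-n}$ so that the total vector mass stays finite and the resulting $\xi,\eta$ are genuine orthogonal measures on $2^\N$) gives a single pair of orthogonal measures for which $\|(\xi|T\eta)\| = \infty$, contradicting (ii). The main obstacle I anticipate is the bookkeeping in this patching step: one must check that the block-diagonal assembly of orthogonal measures is again orthogonal and countably additive (orthogonality across blocks is automatic from the block structure, and countable additivity follows from square-summability of the masses as in the squeezing argument given just before the lemma), and that the total variation over $2^\N\otimes 2^\N$ really is the sum of the block total variations, which uses the atomicity of $2^\N\otimes 2^\N$ exactly as in the proof of Lemma~\ref{lemma}.

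Finally, for (iii) $\Longrightarrow$ (i) I would use the cross-norm machinery of Section~2. When $T$ is Hilbert-Schmidt, the bilinear form $(\xi(A),\eta(B))\mapsto (\xi(A)|T\eta(B))$ factors through the Hilbert space tensor product: writing $T = \sum_n t_n |g_n)(h_n|$ with $(t_n)\in\ell^2$, we have $(\xi(A)|T\eta(B)) = \sum_n t_n \overline{(g_n|\xi(A))}\,(h_n|\eta(B))$, which exhibits $(\xi|T\eta)$ on $\sA\otimes\sB$ as the image of the tensor product semi-measure $\bar\xi\otimes\eta$ under a bounded linear functional on $\sH\otimes_l\sK$ (using Corollary~\ref{corollary}, the functional corresponding to the $1$-summing operator built from $T$ via Swartz's theorem, since Hilbert-Schmidt operators are $1$-summing between Hilbert spaces by the Corollary after the $\ell^1\to\ell^2$ proposition). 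By Corollary~\ref{corollary} the tensor product semi-measure $\bar\xi\otimes\eta$ lifts to a measure on $\sA\otimes_\sigma\sB$ with values in $\sH\otimes W$, and composing with the bounded functional associated to $T$ yields the desired complex measure extending $(\xi|T\eta)$.
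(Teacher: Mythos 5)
Your overall architecture coincides with the paper's: (i)$\Rightarrow$(ii) by boundedness of (the restriction of) a complex measure, (iii)$\Rightarrow$(i) through Corollary~\ref{corollary} by pairing the $\sH\otimes\sK$-valued product measure with the functional determined by the Hilbert--Schmidt operator, and (ii)$\Rightarrow$(iii) by passing to $|T|$, compressing to finite-dimensional pieces with large Hilbert--Schmidt norm, invoking Lemma~\ref{lemma}, and assembling the pieces into one pair of orthogonal measures. The first two implications are fine. The gap is in the assembly step of (ii)$\Rightarrow$(iii): you claim that ``orthogonality across blocks is automatic from the block structure.'' It is not. The block structure only makes the index sets in $\N$ (or $\N\times\N$) disjoint, whereas orthogonality of $\xi$ means $\xi(A)\perp\xi(B)$ for disjoint $A,B$, a condition on the \emph{values}, which lie in the various compression ranges $P_n\sH$. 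Nothing in your construction makes the subspaces $P_n\sH$ mutually orthogonal; choosing finite-rank compressions of $|T|$ with large Hilbert--Schmidt norm can perfectly well produce overlapping ranges. If they overlap, the patched-up $\xi,\eta$ are simply not orthogonal measures, hence do not contradict (ii); and the convergence/countable-additivity argument you cite (square-summability of the masses, as in the squeezing remark before Lemma~\ref{lemma}) also relies on orthogonality of the values. This is precisely where the paper invests its effort: it first splits the identity into mutually orthogonal \emph{infinite}-dimensional projections $E_n$ commuting with $T$ such that each $E_nTE_n$ is not Hilbert--Schmidt (a spectral argument, with separate cases according to whether $\sigma(T)$ is infinite or $T$ has an eigenvalue of infinite multiplicity), and only then picks the finite-rank $P_n\le E_n$; mutual orthogonality of the value blocks is then automatic, and the commutation $E_nT=TE_n$ even annihilates the cross terms, so the total variation is exactly $\sum_n\|(\xi_n|T\eta_n)\|$.

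The repair does not have to follow the paper verbatim: since you only need a lower bound coming from the diagonal atoms, a greedy choice suffices. Pick a finite-rank $P_1$ with $\|P_1|T|P_1\|_2$ large; since $|T|-(1-P_1)|T|(1-P_1)$ has finite rank, $(1-P_1)|T|(1-P_1)$ is again non-Hilbert--Schmidt, so the next finite-rank piece can be chosen inside $(1-P_1)\sH$, and so on; with mutually orthogonal ranges your block assembly and the atomwise lower bound go through (note also that with your $2^{-n}$ scaling you must quantify ``arbitrarily large'' as, say, $\|P_n|T|P_n\|_2\geq 4^n$, so that each diagonal block still contributes at least $1$ -- the analogue of the paper's $\epsilon_n$ bookkeeping). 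As written, however, the orthogonality of the assembled measures is asserted on false grounds, and that is the essential point of the implication.
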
 

\begin{proof} 
(iii) $\Longrightarrow$ (i) has been already established (Corollary~\ref{corollary}), 
whereas (i) $\Longrightarrow$ (ii) is due to Proposition~\ref{variations} (iv). 

So we focus on (ii) $\Longrightarrow$ (iii). 
For this, we first notice that, for isometries $U: \sH \to \sH'$ and $V: \sK \to \sK'$, 
operators $T$ and $VTU^*$ share the validity of (ii) in common, 
so we may assume that $\sH = \sK$ and $T\geq 0$ with a dense range by polar decomposition. 
Let $E$ be a projection in $\sH$. Then $ETE$ is injective on $E\sH$ 
($ETE\xi = 0$ implies $T^{1/2}E\xi = 0$ and threfore $E\xi = 0$) and, if $T$ has the property (ii), 
so does the reduced operator $ETE$ on $E\sH$. 

We now assume that the positive operator $T$ with a trivial kernel 
is not in the Hilbert-Schmidt class. 
Then we can find a decomposition of the identity operator 
into a sequence of mutually orthogonal infinite-dimensional projections $\{ E_n\}$ so that 
$E_nT = TE_n$ and $E_nTE_n$ is not in the Hilbert-Schmidt class. 
(If $\sigma(T)$ is not a finite set, we can take $E_n$ to be spectral projections of $T$ 
according to a partition of $\sigma(T)$ by countably many subsets. Otherwise, 
$T$ has an eigenvalue $t>0$ of infinite multiplicity and take a decomposition $[T=t] = \sum E_n$ 
with the spectral projection $[T \not= t]$ added to, say, $E_1$.)

With these preparatory discussions, we extract the essence of [Dudley-Pakula1972] as follows. 
Let $(\epsilon_n) \in \ell^2$ with $\epsilon_n > 0$ be any auxiliary sequence. 
Since $(E_nTE_n)^2$ is not in the trace class, 
$\sum_{i,j \in I_n} (e_{n,i}|Te_{n,j})(e_{n,j}|Te_{n,i}) = \infty$ for an ONB $\{ e_{n,i}\}_{i \in I_n}$ 
of $E_n\sH$ and we can choose a finite subset 
$F_n \subset I_n$ so that $\sum_{i,j \in F_n} (e_{n,i}|Te_{n,j})(e_{n,j}|Te_{n,i}) \geq 1/\epsilon_n^4$. 
Let $P_n$ be the projection to $\sum_{i \in F_n} \C e_{n,i}$. 
Then the finite-dimensional $P_n \leq E_n$ satisfies $\| P_nTP_n\|_2 \geq 1/\epsilon_n^2$ and 
we apply Lemma~\ref{lemma} to find measures $\xi_n,\eta_n: 2^\N \to P_n\sH$ fulfilling 
$\| \xi_n\| = \| \eta_n \| = \epsilon_n$ and 
$\| (\xi_n|T\eta_n)\| = \epsilon_n^2 \| P_nTP_n\|_2 \geq 1$ for each $n \geq 1$. 
Introduce orthogonal measures $\xi,\eta: 2^{\N\times \N} \to \sH$ by 
$\xi(A) = \sum_n \xi_n(A_n)$ for $A \in 2^{\N\times \N}$ with $A_n = \{ k \in \N; (k,n) \in A\}$ 
so that $\| \xi\|^2 = \sum_n \epsilon_n^2 < \infty$, and similarly for $\eta$. 
\begin{align*}
\| (\xi|T\eta)\| 
&= \sum_{k,l,m,n} |(\xi(k,m)|T\eta(l,n))| 
= \sum_{k,l,m,n} |(\xi_m(k)|T\eta_n(l))|\\ 
&= \sum_{k,l,m,n} |(\xi_m(k)|E_mTE_n\eta_n(l))| 
= \sum_{k,l,n} |(\xi_n(k)|E_nTE_n\eta_n(l))|\\ 
&= \sum_n \|(\xi_n|T\eta_n)\|, 
\end{align*}
which diverges because of $\| (\xi_n|T\eta_n) \| \geq 1$ and the property (ii) fails to 
be satisfied by $T$. 
\end{proof}

\appendix
\section{Khintchine's Inequalities}
The following is based on  \cite[Appendix C]{Gr2014}. 

Let $s_n$ be an independent sequence of random variables with the property $\mu(s_n = \pm 1) = 1/2$ 
for every $n \geq 1$. 

\begin{Example}
Let $\Omega = \prod_1^\infty \{ \pm 1\}$ with the product probability measure $\mu$ of equal weights. 
The random variable $s_n$ is then obtained by extracting the $n$-th component of $\omega \in \Omega$. 

If we apply the binary expansion to the interval $[0,1]$, 
the Lebesgue measure on $[0,1]$ is identified with the product measure of equal weights
on $\prod_1^\infty \{ 0, 1\}$, which is further identified with $\prod_1^\infty \{ \pm 1\}$ 
by the correspondence $(1,-1) \leftrightarrow (0,1)$. 
The random variable $s_n$ is then identified 
with a measurable function $r_n$ on $[0,1]$. Its explicit form is the following: 
Let a periodic function $r_1:\R \to \{ \pm 1\}$ of period $1$ be defined by $r_1(t) = 1$ ($0\leq t < 1/2$) 
$r_1 = -1$ ($1/2 \leq t < 1$) and set $r_n(t) = r_1(2^{n-1}t)$. 
The functions $r_n$ are referred to as Rademacher functions. 
\end{Example}

For $1 \leq p < \infty$, 
consider a linear map $K_p: \ell^1 \ni a = (a_n) \mapsto K_pa = \sum_n a_n s_n \in L^p(\Omega,\mu)$. 
Due to the oscillating sum effect, the obvious boundedness of this map can be improved 
so that it splits through the inclusion $\ell^1 \subset \ell^2$, i.e., 
$C_p = \sup\{ \| K_p(a)\|_p;  \| a\|_2 = 1\}$ can be finite. Khintchine's inequalities assert 
more strongly that the closure of $K_p\ell^1$ in $L^p(\Omega,\mu)$ is topologically isomorphic 
to $\ell^2$. 

\begin{Example}~ 
\begin{enumerate}
\item 
For the case $p=2$, 
\[ 
\| K_2(a)\|_2^2 = \sum_{j,k} \overline{a_j} a_k \int_\Omega s_j(\omega) s_k(\omega)\, \mu(d\omega) 
= \sum_n |a_n|^2. 
\] 
\item 
For $1 \leq p < 2$, let $q > 2$ be defined by $1/p = 1/2 + 1/q$. By H\"older's inequality, 
$\| f\|_p \leq \| 1\|_q \| f\|_2 = \| f\|_2$ for $f \in L^p(\Omega,\mu)$ and then, by duality, 
$\| f\|_2 \leq \| f\|_{p'}$ for $f \in L^{p'}(\Omega,\mu)$, where $p' > 2$ is the dual exponent of $p$. 
Now we observe that 
$\| K_pa\|_p \leq \| K_2a\|_2 = \| a\|_2$ for $1 \leq p \leq 2$ and 
$\| a\|_2 = \| K_2a\|_2 \leq \| K_pa\|_p$ for $2 \leq p < \infty$. 
\end{enumerate}
\end{Example} 

\begin{Theorem}[Khintchine's inequalities]~
For each $1 \leq p < \infty$, let $C_p > 0$ be the best constant of the following inequality 
on a sequence $(a_n) \in \ell^1$ of complex numbers. 
  \begin{enumerate}
  \item 
For $2 \leq p < \infty$, 
\[ 
\left( \int_\Omega |\sum_n a_n s_n(\omega)|^p\, \mu(d\omega) \right)^{1/p} 
\leq C_p \sqrt{\sum_n |a_n|^2}. 
\]
\item 
For $1 \leq p \leq 2$, 
\[ 
\sqrt{\sum_n |a_n|^2} \leq C_p \left( \int_\Omega |\sum_n a_n s_n(\omega)|^p\, \mu(d\omega) \right)^{1/p}. 
\] 
  \end{enumerate} 
Then $C_p \leq 2p^{1/p} \Gamma(p/2)^{1/p}$ for $p > 2$ and 
$C_p \leq C_{4-p}^{4/p -1}$ for $1 \leq p < 2$. In particular, we have 
\[ 
\sqrt{\sum_n |a_n|^2} \leq 12\sqrt{\pi} \int_\Omega |\sum_n a_n s_n(\omega)|\, \mu(d\omega). 
\] 
\end{Theorem}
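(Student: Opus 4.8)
The plan is to prove the two inequalities in the stated regimes and then trace the numerical constants.

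First I would handle the case $p > 2$, which is the substantive analytic estimate. The standard route is to bound the $L^p$-norm of $f = \sum_n a_n s_n$ by controlling all its moments. Normalizing $\sum_n |a_n|^2 = 1$, one expands the moment generating function: for real $t$, $\int_\Omega e^{t \sum_n a_n s_n}\, d\mu = \prod_n \int_\Omega e^{t a_n s_n}\, d\mu = \prod_n \cosh(t a_n)$, using independence and $\mu(s_n = \pm 1) = 1/2$. The elementary inequality $\cosh(u) \leq e^{u^2/2}$ then gives $\int_\Omega e^{t f}\, d\mu \leq e^{t^2/2}$, i.e. $f$ is subgaussian with the same constant as a standard Gaussian. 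From the subgaussian bound one extracts the tail estimate $\mu(|f| > \lambda) \leq 2 e^{-\lambda^2/2}$ by the usual Chernoff/Markov argument (optimizing over $t$), and then integrates: $\int |f|^p\, d\mu = \int_0^\infty p\lambda^{p-1} \mu(|f| > \lambda)\, d\lambda \leq 2p \int_0^\infty \lambda^{p-1} e^{-\lambda^2/2}\, d\lambda$. The substitution $u = \lambda^2/2$ turns the last integral into a Gamma integral, yielding $\|f\|_p^p \leq 2p \cdot 2^{p/2 - 1}\Gamma(p/2)$, hence $\|f\|_p \leq 2p^{1/p}\Gamma(p/2)^{1/p}$ after absorbing the $2^{p/2-1}$ factor (this is exactly the claimed bound $C_p \leq 2 p^{1/p}\Gamma(p/2)^{1/p}$, possibly with a bit of slack in the constant, which is harmless). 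For $p = 2$ the identity $\|f\|_2^2 = \sum_n |a_n|^2$ was already noted in the Example above.

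Next I would derive the lower inequality for $1 \leq p < 2$ from the upper inequality for a conjugate-type exponent, via the $L^2$-interpolation trick sketched in the preceding Example. Fix $p \in [1,2)$ and normalize $\|f\|_2 = 1$. Choose $\theta \in (0,1)$ and $r > 2$ with $1/2 = \theta/p + (1-\theta)/r$, so that the three-line / H\"older interpolation of $L^p, L^2, L^r$ gives $\|f\|_2 \leq \|f\|_p^\theta \|f\|_r^{1-\theta}$. Combining with the already-proven bound $\|f\|_r \leq C_r$ yields $1 \leq \|f\|_p^\theta C_r^{1-\theta}$, i.e. $\|f\|_p \geq C_r^{-(1-\theta)/\theta}$, which is the desired lower bound with $C_p = C_r^{(1-\theta)/\theta}$. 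The paper's stated form $C_p \leq C_{4-p}^{4/p - 1}$ corresponds to the particular admissible choice $r = 4 - p$ (one checks that with $r = 4-p$ the exponent $(1-\theta)/\theta$ works out to $4/p - 1$); I would simply verify that algebraic identity.

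Finally, for the displayed concrete inequality with $p = 1$, I would plug $p = 1$ into $C_p \leq C_{4-p}^{4/p-1} = C_3^{3}$ and estimate $C_3$ from $C_3 \leq 2 \cdot 3^{1/3}\Gamma(3/2)^{1/3} = 2 \cdot 3^{1/3}(\sqrt{\pi}/2)^{1/3}$, so $C_3^3 \leq 8 \cdot 3 \cdot \sqrt\pi/2 = 12\sqrt\pi$, giving $\sqrt{\sum_n |a_n|^2} \leq 12\sqrt\pi \int_\Omega |\sum_n a_n s_n|\, d\mu$ as claimed. The main obstacle, and the only place requiring genuine care, is the subgaussian moment computation for $p > 2$: getting the Chernoff tail bound and the Gamma-integral bookkeeping to land on the advertised constant, and making sure the complex-coefficient case reduces cleanly to the real one (treating real and imaginary parts, or noting $\cosh$ of a complex argument still obeys the needed bound along the relevant contour). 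Everything after that is interpolation and arithmetic.
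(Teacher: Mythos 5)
Your proposal follows essentially the same route as the paper: the subgaussian bound $\prod_n\cosh(ta_n)\le e^{t^2\|a\|_2^2/2}$ with the tail estimate and Gamma-integral bookkeeping for $p>2$ (real coefficients first, then real/imaginary splitting), and the case $1\le p<2$ deduced from exponent $4-p$ — your interpolation with $r=4-p$, $\theta=p/4$ is exactly the paper's Cauchy--Schwarz step, and the $p=1$ arithmetic via $C_1\le C_3^3$ matches. One bookkeeping caution: keep the real-coefficient constant as $\sqrt2\,p^{1/p}\Gamma(p/2)^{1/p}$ and pass to $2p^{1/p}\Gamma(p/2)^{1/p}$ only after the Minkowski splitting $\|a\|_2+\|b\|_2\le\sqrt2\,\|c\|_2$ for complex coefficients; rounding up to $2$ before that step would leave you with $2\sqrt2$ and spoil the advertised $12\sqrt\pi$ at $p=1$.
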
 

\begin{proof}
We first show that (ii) is a consequence of (i): Let $1 \leq p \leq 2$. Then, 
\begin{align*} 
(a|a) &= \int |\sum_n a_n s_n(\omega)|^2\, \mu(d\omega)\\ 
&= \int |\sum_n a_n s_n(\omega)|^{p/2} |\sum_n a_n s_n(\omega)|^{2-p/2}\, \mu(d\omega)\\ 
&\leq \left( \int |\sum_n a_n s_n(\omega)|^p \right)^{1/2} 
\left( \int |\sum_n a_n s_n(\omega)|^{4-p} \right)^{1/2}\, \mu(d\omega)\\ 
&\leq \left( \int |\sum_n a_n s_n(\omega)|^p \right)^{1/2} C_{4-p}^{2 - p/2} 
\| a\|_2^{2 - p/2}, 
\end{align*}
whence 
\[ 
\| a\|_2^{p/2} \leq C_{4-p}^{2 - p/2} \| \sum_n a_n s_n \|_p^{p/2},  
\] 
i.e., $\| a\|_2 \leq C_{4-p}^{4/p - 1} \| \sum_n a_n s_n\|_p$. 

Now let $p \geq 2$ and we focus on (i). For the moment, we assume $a_n \in \R$. 
In view of the equality
\[ 
\int_\Omega |f(\omega)|^p\, \mu(d\omega) 
= \int_\Omega p\int_0^{|f(\omega)|} t^{p-1}\, dt\, \mu(d\omega)
= p \int_{0 < t < |f(\omega)|} t^{p-1} \mu(d\omega) dt 
= p \int_0^\infty t^{p-1} \mu(|f| > t)\, dt 
\] 
for a measurable function $f$ on $\Omega$, we try to capture how $\mu(|\sum_a a_n s_n| > t)$ 
behaves as $t$ increases. 
To this end, we estimate $\int_\Omega e^{t|\sum_n a_n s_n(\omega)|}\, \mu(d\omega)$ in two ways: 
The first one is the obvious lower bound and given by 
\[ 
\int_\Omega e^{t |\sum a_n s_n(\omega)|}\, \mu(d\omega) \geq e^{t^2} \mu(|\sum_n a_n s_n| > t).  
\] 
The second one is about an upper bound, for which we use the inequality $e^x + e^{-x} \leq 2 e^{x^2/2}$ 
(compare Taylor coefficients) to get 
\[ 
\int_\Omega e^{\pm t \sum a_n s_n(\omega)}\, \mu(d\omega) 
= \prod_n \int_\Omega e^{\pm t a_n s_n(\omega)}\, \mu(d\omega) 
= \prod_n \frac{e^{t a_n} + e^{-t a_n}}{2} \leq e^{t^2\sum_n a_n^2/2} = e^{t^2 (a|a)/2}, 
\] 
and then 
\[ 
\int_\Omega e^{t |\sum a_n s_n(\omega)|}\, \mu(d\omega) 
\leq \int_\Omega (e^{t\sum a_n s_n(\omega)} + e^{-t\sum a_n s_n(\omega)})\, \mu(d\omega) 
\leq 2e^{t^2 (a|a)/2}. 
\] 
Combinig these, we obtain the desired estimate $\mu(|\sum_n a_n s_n| > t) \leq 2 e^{-t^2/2(a|a)}$, 
which is used in the $t$-integral expression for $\| K_p(a)\|_p^p$ to have 
% Combining these, we see that the inequality 
% $\mu(|\sum_n a_n s_n| > t) \leq 2 e^{\rho^2(a|a)/2 - \rho t}$
% holds for any $\rho>0$ and $t>0$. This upper bound is now minimized by putting $\rho = t/(a|a)$ to get 
% $\mu(|\sum_n a_n s_n| > t) \leq 2 e^{-t^2/2(a|a)}$. 
\[ 
\| K_p(a)\|_p^p \leq 2p  \int_0^\infty t^{p-1} e^{-t^2/2(a|a)}\, dt 
= p 2^{p/2} (a|a)^{p/2} \Gamma(p/2),  
\] 
i.e., $\|K_p(a)\|_p \leq \sqrt{2} p^{1/p} \Gamma(p/2)^{1/p} \| a\|_2$ for a real $(a_n) \in \ell^1$. 

A complex sequence $c_n = a_n + ib_n$ is handled with help of 
Minkowski inequality and the usual estimate $\| a\|_2 + \| b\|_2 \leq \sqrt{2}\| c\|_2$ as 
\[ 
\| K_p(c)\|_p \leq \| K_p(a)\|_p + \| K_p(b)\|_p 
\leq \sqrt{2} p^{1/p} \Gamma(p/2)^{1/p} (\| a\|_2 + \| b\|_2) 
\leq 2 p^{1/p} \Gamma(p/2)^{1/p} \| c\|_2,  
\] 
showing $C_p \leq 2p^{1/p} \Gamma(p/2)^{1/p}$ for $p \geq 2$. 
\end{proof}

\bigskip
\noindent

\end{document}